\documentclass[12pt,reqno,letter]{article}

\usepackage[margin=1in]{geometry}

\usepackage{listings}
\setlength{\headheight}{15pt}
 \usepackage[usenames,dvipsnames]{color}
 \usepackage[colorlinks=true, pdfstartview=FitV, linkcolor=blue, citecolor=blue, urlcolor=blue]{hyperref}

\usepackage[nottoc,numbib]{tocbibind}
\usepackage{helvet}

\usepackage{amsmath,amsthm,amsfonts,amssymb,amscd}

\usepackage{mathtools} 
\usepackage{mathabx}
\usepackage{float}
\usepackage{graphicx}
\usepackage{makeidx}
\usepackage{enumerate}
\usepackage{mathtools}
\usepackage{xfrac}

\usepackage[mathscr]{euscript}
\usepackage{stmaryrd}
\usepackage{microtype}
\usepackage{booktabs}
\usepackage{cleveref}
\usepackage{bookmark}
\usepackage{mathrsfs}

\usepackage{emptypage}
\usepackage{tikz}

\DeclareMathAlphabet{\mathpzc}{OT1}{pzc}{m}{it}

\theoremstyle{plain}
\newtheorem{theorem}{\scshape Theorem}[section]

\newtheorem{lemma}[theorem]{\scshape Lemma}
\newtheorem{corollary}[theorem]{\scshape Corollary}
\theoremstyle{definition}
\newtheorem{definition}[theorem]{\scshape Definition}

\newtheorem{remark}[theorem]{\scshape Remark}

\def\<{\langle}
\def\>{\rangle}

\def\F{\mathcal{F}}

\def\H{{:\!\mathcal H\!:}}
\def\N{\mathbb{N}}
\def\P{\mathbb{P}}
\def\R{\mathbb{R}}
\def\T{\mathbb{T}}
\def\Z{\mathbb{Z}}

\def\1{\mathbf{1}}

\def\inte#1{
\displaystyle\mathop{#1\kern0pt}^\circ
}




\newcommand{\beq}{\begin{equation}}
\newcommand{\eeq}{\end{equation}}
\newcommand{\ben}{\begin{eqnarray}}
\newcommand{\een}{\end{eqnarray}}
\newcommand{\beno}{\begin{eqnarray*}}
\newcommand{\eeno}{\end{eqnarray*}}

\def\T{\mathbb{T}}


\def\virgp{\raise 2pt\hbox{,}}
\def\cdotpv{\raise 2pt\hbox{;}}

\def\C{\mathop{\mathbb C\kern 0pt}\nolimits}
\def\DD{\mathop{\mathbb D\kern 0pt}\nolimits}
\def\EE{\mathop{\mathbb E\kern 0pt}\nolimits}
\def\K{\mathop{\mathbb K\kern 0pt}\nolimits}
\def\N{\mathop{\mathbb  N\kern 0pt}\nolimits}
\def\Q{\mathop{\mathbb  Q\kern 0pt}\nolimits}
\def\R{{\mathop{\mathbb R\kern 0pt}\nolimits}}
\def\SS{\mathop{\mathbb  S\kern 0pt}\nolimits}
\def\St{\mathop{\mathbb  S\kern 0pt}\nolimits}
\def\Z{\mathop{\mathbb  Z\kern 0pt}\nolimits}
\def\ZZ{{\mathop{\mathbb  Z\kern 0pt}\nolimits}}
\def\H{{\mathop{{\mathbb  H\kern 0pt}}\nolimits}}
\def\PP{\mathop{\mathbb P\kern 0pt}\nolimits}
\def\TT{\mathop{\mathbb T\kern 0pt}\nolimits}



\newcommand{\with}{\quad\hbox{with}\quad}





\usepackage{geometry}

\begin{document}

\title{On White Noise Solutions of mSQG Equations on $\R^2$
 }
\maketitle

\author{Siyu Liang  \footnotemark[1] \footnotemark[2] \footnotemark[3]}
        \footnotetext[1]{Department of Mathematics, University of Bielefeld, D-33615 Bielefeld, Germany,
                sliang@math.uni-bielefeld.de}
 \footnotetext[2]{Academy of Mathematics and Systems Science, Chinese Academy of Sciences, Beijing 100190, China}
        \footnotetext[3]{School of Mathamatical Sciences, University of Chinese Academy of Sciences, Beijing 100049, China}

\begin{abstract}
In this paper, we show  existence of   white noise solutions for  weak formulations of  modified Surface  Quasi-Geostrophic (mSQG) equations.  Based on previous  results (\cite{FS}) on white noise solutions for mSQG equations  on
the torus $\T^2$,  we show a similar result for the whole space $\R^2$ by letting the volume of the torus go to infinity and applying compactness methods (Skorokhod's theorem).  

\end{abstract}

\textbf{Key words:  white noise solutions,  weak formulation,   mSQG equations,  Skorokhod's theorem }

\section{Introduction}

 In this paper,  we  study the stationary solutions of the
   following
 modified Surface  Quasi-Geostrophic 
 equations (mSQG equations) 
on the torus $\T^2:=(\mathbb{R}/2\pi \mathbb{Z})^2$  and the whole space $\R^2$
 \begin{equation*}\tag{mSQG}\label{2DmSQG}
  \left\{ \aligned
  \partial_t \omega+ u\cdot\nabla \omega &= 0, \\
  u &= \nabla^{\perp} (-\Delta)^{-(1+\epsilon)/2} \omega,
  \endaligned
  \right.
  \end{equation*}
  where $0<\epsilon<1$,  $\nabla^{\perp}=(-\partial_2, \partial_1 )
  $, and  $(-\Delta)^{-(1+\epsilon)/2}$ is the fractional Laplacian operator,     the definition of which is given  in  Section \ref{subsection12}.

  When $\epsilon=1$,  the above equation becomes the Euler equation, and for 
  $\epsilon=0$ it is called the   Surface Quasi-Geostrophic  (SQG) equation.

   The SQG equations   are approximations to the shallow water equations with a small Rossby number (which goes to $0$ in the limit),  where a  small Rossby number
  means the system is mainly determined by the Coriolis force which is caused by earth rotation.   It is also called  ``(nearly) in geostrophic balance".

  The SQG equation is obtained from  the $3D$ Quasi-Geostrophic equation by assuming the potential vorticity to be  identically $0$.  
  The SQG equation ($\epsilon=0$) is  introduced in 
  \cite{ConMajTab1994},  where a striking mathematical and physical analogy is developed between the structure and formation of singular solutions of 
SQG equations and the potential formation of finite-time singular solutions for the $3D$ Euler equations.  For a more physical background of Quasi-Geostrophic  equations and the  formulation of SQG equations we refer to \cite{HPGS,lapeyre2017surface,DAVIES20031787,ped} and \cite{vallis2017atmospheric}.

   The classical incompressible Euler equations are well-known 
  and  have been studied extensively in the literature, see for example,
  \cite{LANDAU1987x},  \cite{Chemin} and \cite{Lions}. 
  It is constructed in   \cite{FS}  a white noise  solution of Euler equations
    by the following point-vortex system:\\
  \[
\omega_{t}^{N}=\frac{1}{\sqrt{N}}\sum_{n=1}^{N}\xi_{n}\delta_{X_{t}^{n,N}},
\]
and
 for every $N\in\mathbb{N}$,  the finite dimensional dynamics \begin{equation*}
\frac{dX_{t}^{i,N}}{dt}=\sum_{j=1}^{N}\frac{1}{\sqrt{N}}\xi_{j}K\left(
X_{t}^{i,N}-X_{t}^{j,N}\right)  \qquad i=1,...,N\label{vortex system}%
\end{equation*}
in
$\left(  \mathbb{T}^{2}\right)  ^{N}$
with initial condition $\left(  X_{0}^{1,N},...,X_{0}^{N,N}\right)  \in\left(
\mathbb{T}^{2}\right)  ^{N}\backslash\Delta_{N}$, $
\Delta_N:=\{(x_1, x_2,...,x_N)\in (\mathbb{T}^{2})^{N} ;x_i=x_j \text{ for some } i\neq j,i,j=1,2,...N \}
$,   where  $K$ is the
Biot-Savart kernel on $\mathbb{T}^{2}$ (we set $K\left(  0\right)  =0$), and the
intensities $\xi_{1},...,\xi_{N}$ are (random) numbers of any sign.

Exploiting the similarity to the Euler equations, 
many classical results have also been obtained for   SQG and mSQG equations.  For example, global existence  of weak solutions to  SQG equations is known in the spaces $L^p(\R^2)$,  for $p \in (4/3,\infty)$  (see \cite{Res95, Marchand2008}).   In \cite{bsv16},  non-uniqueness of weak solutions is proved in a certain class  by using the methods of  convex integration.    mSQG equations, which are equations between SQG equations and Euler equations, have also been studied by many papers,  such as
\cite{Hunter2019,  Hunter2020GlobalSF, ChaConCorGanWu2012,  Geldhauser_Romito_2020}. In a recent work
\cite{arXiv210411048},      
    nontrivial global (classical) solutions of the mSQG equations have been constructed.

 Similarly to Euler equations, there are also some results via  point-vortex model to approximate mSQG equations, such as \cite{FS}, \cite{Luo2019RegularizationBN},     \cite{LUO2021236},    \cite{Garcia_2020}(for more general models),
  \cite{Geldhauser_Romito_2020}, and \cite{rosenzweig}. 
  In \cite{FS},   the point-vortex approximation is used  
  to show the existence of  white noise solutions of 
  the weak formulation of mSQG equations on the torus (see Definition \ref{whitenoisesolution}   for
 the definition of white noise solutions).

  However, for the case of $\R^2$, there is no  result 
  of existence of white noise solutions 
  as far as we know. 
  In this paper,   we will  generalize the result of the existence of white noise solutions of mSQG equations  to  $\R^2$.
  But we will prove it in a different way.
   Since there have been previous results of the existence of white noise solutions  on $\T^2$ (\cite{FS}),  we do not use vortex systems to approximate solutions. 
   Instead, since the existence of 
   white noise solutions holds on the torus of any volume,
    we will let the volume of torus go to infinity and apply the compactness methods.

    The reason that we consider the mSQG equations on $\R^2$ is  that the kernel corresponding to  $\nabla^{\perp} (-\Delta)^{-(1+\epsilon)/2}$   is dominated by 
  $C\frac{1}{|x|^{2-\epsilon}}$.
  Therefore, the kernel corresponding to mSQG equations ($0<\epsilon<1$) has a better behaviour at infinity compared to $2D$ Biot-Savart kernel ($\epsilon=1$).  When $\epsilon=0$, the behaviour of the kernel at infinity is even better. However, its behaviour at the origin is bad. Therefore, 
  in the case of SQG equations ($\epsilon=0$),
  it is difficult  to obtain even the existence of white noise solutions    
    on the torus $\T^2$.

    \textbf{Main results of this paper }:  we prove the existence of 
    white noise solutions to the weak formulations of \eqref{2DmSQG}  on $\R^2$
  (Theorem \ref{maintheorem}).

  In Section \ref{sec2}, we first introduce some function and distribution spaces. 
  Then we show the properties and the relations of the kernel of \eqref{2DmSQG} on $\T^2$ and $\R^2$. 
  Finally we introduce the definition of space white noise. 
 In Section \ref{sec3}, we define the nonlinear term by approximating sequences. In Section \ref{sec4}, we prove our main result.

  \section{Preliminaries}\label{sec2}

  \subsection{Function and distribution spaces}

  In this section we introduce some function and distribution spaces.
  \subsubsection{Function and distribution spaces on $\mathbb{R}^{2}$}\label{cp1sec1subsec1}
Denote by $\mathcal{S}(\mathbb{R}^2)$ the Schwartz space and 
$\mathcal{S}'(\mathbb{R}^2)$ its dual space.
Denote by $C_c^{\infty}(\mathbb{R}^2)$ the space of  smooth functions
on $\mathbb{R}^{2}$  with compact support. 
Denote by $C_c^{k}(\mathbb{R}^2)$ the space of  compact supported functions on $\mathbb{R}^{2}$ which have $k$th
continuous derivatives. \\
On $\mathbb{R}^{2}$, we recall the classical (non-homogeneous) Sobolev spaces:
\beq\label{defhs}
H^{s}(\mathbb{R}^{2}):=\Bigl\{u\in \mathcal{S}'(\mathbb{R}^{2});\| u \|_{H^{s}(\mathbb{R}^{2})}^{2}:=\int_{\mathbb{R}^{2}}(1+| \xi |^{2})^s\mid \hat{u}(\xi)\mid^{2}d\xi<\infty\,\Bigr\},   
\eeq
where $s\in\R$, 
 and 
 $$\hat{u}(\xi)=\mathcal{F}u(\xi):=\int_{\R^2}u(x)e^{- ix\cdot \xi }dx,
   $$
  denotes the Fourier transform of $u$ on $\R^2$.
One knows that   $H^{s}(\mathbb{R}^{2})$ is a Hilbert space with $H^{-s}(\mathbb{R}^{2})$ as its dual space.

For $s\in\R$,   we define the spaces of vector fields $H^{s}(\mathbb{R}^{2};\mathbb{R}^2)$ to be the sets of the vector-valued functions with both components in  $H^{s}(\mathbb{R}^{2})$.
 For simplicity, from now on,   we will use the same notations of vector fields and function spaces when there is no confusion.

We introduce the following weighted Sobolev norms and spaces. 
\begin{definition}[Weighted Sobolev norms and spaces]
{\sl Let $\rho\in L^1_{loc}(\R^d)$ and $\rho(x)\geq 0$.
Define the weighted Sobolev norms 
$\|\cdot \|_{H^{s}(\mathbb{R}^{d},\rho)}$ by
$$\|\cdot \|_{H^{s}(\mathbb{R}^{d},\rho)}:=\|\rho \cdot\|_{H^s(\mathbb{R}^{d})}.$$
Define the weighted Sobolev spaces $H^{s}(\mathbb{R}^{d},\rho)$ as the subspace of 
$\mathcal{S}'(\mathbb{R}^d)$ 
such that  $\|\cdot \|_{H^{s}(\mathbb{R}^{d},\rho)}$ finite.}
\end{definition}

\noindent
Since  we always consider the $2D$ case,
from now on for simplicity we use the notation $H^{s}(\rho)$
instead of $H^{s}(\mathbb{R}^{2},\rho)$ when no confusion occurs.
Moreover, we define the space $H^{-1-}(\rho)$ as  the space
 $\bigcap\limits_{\epsilon>0}H^{-1-\epsilon}(\rho)$ with the following Frechet metric $d$:
 $$d(u,v)=\sum\limits_{n=1}^{\infty}\frac{1}{2^n}
\frac{\|u-v\|_{H^{-1-\frac{1}{n}}(\rho)}}{1+\|u-v\|_{H^{-1-\frac{1}{n}}(\rho)}}.
 $$
Then,  convergence in $H^{-1-}(\rho)$  is equivalent to convergence in 
 $H^{-1-\epsilon}(\rho)$  for each $\epsilon>0$.

 \noindent
 Let 
 $$\rho_{\sigma}(x):= \frac{1}{\langle x\rangle^{\sigma}}    $$
and 
 $$\rho_{\sigma'}(x):= \frac{1}{\langle x\rangle^{\sigma'}},    $$
 where $\langle x\rangle=(1+|x|^2)^{\frac{1}{2}}$.\\
 
 \noindent
The following lemma is proved in \cite[Theorem 6.31]{bookHans}:
 \begin{lemma}
 {\sl  
For $0<\sigma'< \sigma$, and $s'>s$,  the distributional space 
 $H^{s'}(\rho_{\sigma'})$ 
 is compactly embedded in 
 $H^{s}(\rho_{\sigma})$.
 }
 \end{lemma}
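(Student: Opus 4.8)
The plan is to reduce the statement to the classical Rellich–Kondrachov-type compactness theorem for weighted Sobolev spaces, which is exactly what is cited as \cite[Theorem 6.31]{bookHans}. By definition of the weighted norms, $u \in H^{s'}(\rho_{\sigma'})$ means $\rho_{\sigma'} u \in H^{s'}(\R^2)$, and the map $u \mapsto \rho_{\sigma'} u$ is an isometric isomorphism from $H^{s'}(\rho_{\sigma'})$ onto $H^{s'}(\R^2)$ (similarly for $\rho_\sigma$ and $H^s$). So it suffices to show that the composed linear map
$$
H^{s'}(\R^2) \;\xrightarrow{\;(\rho_{\sigma'})^{-1}\cdot\;}\; H^{s'}(\rho_{\sigma'}) \;\hookrightarrow\; H^{s}(\rho_{\sigma}) \;\xrightarrow{\;\rho_\sigma\cdot\;}\; H^{s}(\R^2)
$$
is compact; concretely this composed map is multiplication by the weight $w(x) := \rho_\sigma(x)/\rho_{\sigma'}(x) = \langle x\rangle^{\sigma'-\sigma}$, which since $\sigma' < \sigma$ is a smooth, bounded function tending to $0$ as $|x|\to\infty$, with all derivatives decaying as well. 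Thus the lemma is equivalent to: multiplication by $\langle x\rangle^{-(\sigma-\sigma')}$ is a compact operator from $H^{s'}(\R^2)$ to $H^{s}(\R^2)$ when $s' > s$.

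For this, I would proceed in two standard steps. First, \textbf{a local gain of compactness}: on any ball $B_R$, the embedding $H^{s'}(B_R) \hookrightarrow H^{s}(B_R)$ is compact since $s' > s$ (Rellich–Kondrachov). Second, \textbf{tightness at infinity}: given a bounded sequence $(u_n)$ in $H^{s'}(\R^2)$, write $v_n = w u_n$; split $v_n = \chi_R v_n + (1-\chi_R) v_n$ where $\chi_R$ is a smooth cutoff equal to $1$ on $B_R$. On the tail, $\|(1-\chi_R) v_n\|_{H^s(\R^2)} \lesssim \sup_{|x|\ge R} |w(x)| \cdot \|u_n\|_{H^{s'}} + (\text{lower order commutator terms})$, which can be made uniformly small by taking $R$ large, because $w$ and its derivatives decay. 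On the truncated part $\chi_R v_n = (\chi_R w) u_n$, the multiplier $\chi_R w$ is smooth and compactly supported, so $(\chi_R w) u_n$ is bounded in $H^{s'}$ with support in a fixed compact set, hence has a subsequence converging in $H^{s}$ by the local compact embedding. A diagonal argument over $R \to \infty$ then produces a subsequence of $(v_n)$ that is Cauchy in $H^s(\R^2)$, i.e.\ a subsequence of $(u_n)$ Cauchy in $H^s(\rho_\sigma)$.

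The main obstacle — and the only place requiring care — is the \textbf{handling of fractional/negative Sobolev exponents} in the tightness estimate: when $s, s'$ are not non-negative integers, the splitting $v_n = \chi_R v_n + (1-\chi_R)v_n$ does not interact with the $H^s$ norm in a purely local way, so one must control commutators $[\langle D\rangle^s, \chi_R]$ (or use the paradifferential characterization of $H^s$) and the non-local tail of the multiplication operator. The clean way around this is simply to invoke \cite[Theorem 6.31]{bookHans} directly, which is stated precisely for these weighted spaces with real exponents and already incorporates this analysis; the sketch above is the underlying mechanism. I would therefore present the proof as: reduce to multiplication by $\langle x\rangle^{-(\sigma-\sigma')}$ via the isometries, note $\sigma-\sigma'>0$ and $s'>s$, and cite the theorem.
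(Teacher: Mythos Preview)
Your proposal is correct and takes essentially the same approach as the paper: the paper does not give an independent proof but simply records the lemma as a direct consequence of \cite[Theorem 6.31]{bookHans}, which is exactly the reference you end up invoking. Your additional reduction via the isometries $u\mapsto \rho_{\sigma'}u$ and the Rellich--tightness sketch is a helpful elaboration of the mechanism behind that cited theorem, but it is not needed beyond the citation.
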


\subsubsection{Function and  distribution spaces on $\mathbb{T}^{2}$}

Denote by $C^{\infty}(\T^2)$ 
the space of smooth functions on $\T^2$.
Noting that $\{\frac{1}{2\pi}e^{ ik\cdot x}\}_{k\in \Z^2}$ is the orthonormal basis of    $L^{2}(\T^2; \C)$,
for $u\in L^{2}(\T^2)$, we consider the Fourier expansion of $u$:
 $$u(x)=\sum\limits_{k\in \mathbb{Z}^{2} }\hat{u}_{k}\frac{1}{2\pi}e^{ ik\cdot x} \with
 \hat{u}_{k}=\overline{\hat{u}_{-k}},$$
 where $\hat{u}_{k}:=\frac{1}{2\pi}\int_{\mathbb{T}^{2}}u(x)e^{- ik\cdot x}dx$ denotes the $k$th Fourier coefficient of $u$ on $\mathbb{T}^{2}$. It follows from Fourier-Plancherel equality that the above series is convergent  in $L^{2}(\mathbb{T}^{2})$.
 Define the Sobolev norm for $s\in\R$ :
 \beq\label{hstorus}
 \| u \|_{H^{s}(\mathbb{T}^{2})}^{2}:=\sum\limits_{k\in \mathbb{Z}^{2} }(1+\mid k \mid^{2} )^s\mid \hat{u}_{k}\mid^{2}.
 \eeq
 We define the Sobolev spaces $H^{s}(\mathbb{T}^{2})$   as the completion of $C^{\infty}(\mathbb{T}^{2})$ with respect to the norm $\parallel \cdot \parallel_{H^{s}(\mathbb{T}^{2})}$.
 For $s\in\R$,   we define the space of vector fields $H^{s}(\mathbb{T}^{2};\mathbb{R}^2)$ to consist of the vectors with both components in  $H^{s}(\mathbb{T}^{2})$.\\


\noindent
On  $\T^2$, 
define Fr\'echet space $H^{-1-}$ to be the linear space
$\bigcap\limits_{n\geq 1} H^{-1-\frac{1}{n}}   $
 with the distance  as follows:
$$\rho_{H^{-1-}}(x,y)=\sum\limits_{n=1}^{\infty}\frac{1}{2^n}\frac{\|x-y\|_{H^{-1-\frac{1}{n}} } }
{1+{\|x-y\|_{H^{-1-\frac{1}{n}} } }}.
$$

 \begin{remark}\label{Z2summation}
 
 \begin{enumerate}
 \
 	\item 
 In this paper, by the notation 
 $ \sum\limits_{k\in\mathbb{Z}^{2}  }
 $, we always mean
 $\lim\limits_{N\rightarrow \infty}\sum\limits_{|k_1|\leq N, |k_2|\leq N} $,
 which is particularly important when the series is  not absolutely convergent.

  \item
 From now on 
 we may suppress   the domain $(\R^2)$ or $(\T^2)$ in the notation of these function spaces,  when  no confusion occurs.
 
   \end{enumerate}
  \end{remark}

  \subsection{Introduction of weak formulations of mSQG equations  }
  \label{subsection12}

   \subsubsection{Kernel  of \eqref{2DmSQG}  on $\R^2$}

On the whole space,  we  know that the operator    
$(-\Delta)^{-(1+\epsilon)/2}$ and 
 $\nabla^{\perp} (-\Delta)^{-(1+\epsilon)/2}$ are  defined by the Fourier multiplier $ |\xi| ^{-(1+\epsilon)}$ and
  $i \xi^{\perp} |\xi| ^{-(1+\epsilon)}$, respectively.
 Hence   if we write them in the forms of the convolution,  they are equivalent to the convolution with
    $\mathcal{\F}^{-1}\bigl( |\xi| ^{-(1+\epsilon)}\bigr)$ and
    $\mathcal{\F}^{-1}\bigl(i \xi^{\perp} |\xi| ^{-(1+\epsilon)}\bigr)$, respectively.\\
     Define
  $$K_\epsilon:=\mathcal{\F}^{-1}\bigl(i \xi^{\perp} |\xi| ^{-(1+\epsilon)}\bigr).
  $$
  Recall that on $\R^2$,   the Fourier transform and
   Fourier inverse transform  are defined as follows:
   $$\hat{f}(\xi)=\mathcal{F}f(\xi):=\int_{\R^2}f(x)e^{- ix\cdot \xi }dx,
   $$
   and
  $$\mathcal{F}^{-1}f(\xi):=\frac{1}{4\pi^2}\int_{\R^2}f(x)e^{ix\cdot \xi }dx.
   $$
 Thus we know that 
  $K_\epsilon$ is dominated by $C_{\epsilon}\frac{1}{|x|^{2-\epsilon}}$ for some constant $C_{\epsilon}$.
  The kernel is singular at the origin.

   \subsubsection{Kernel of \eqref{2DmSQG}  on the torus and the relations to the kernel on $\R^2$}\label{section222}
  
  For fixed $M$,     denote  $\mathbb{T}_M^2=(\mathbb{R}/M\mathbb{Z})^2$ to be the torus of length $M$.
   Let $f$ be a distribution in some Sobolev space $H^{-N}(\T_M^2)$,  for some $N>0$ with the Fourier expansion
  $$f(x)=\sum\limits_{k\in \mathbb{Z}^{2}\backslash \{(0,0)\}  }\hat{f}_{k}^Me_{k}^M\with
 \hat{f}_{k}^M=\overline{\hat{f}_{-k}^M},$$
 where $\hat{f}_{k}^M:=\frac{1}{M}\int_{\T_M^2}f(x)e^{-2\pi ik\cdot x/M}dx$  denotes the $k$th Fourier coefficient of $f$ on $\T_M^2$ and $e_{k}^M(x)= \frac{1}{M}e^{2\pi ik\cdot x/M}$.
  The operator $(-\Delta)^{-(1+\epsilon)/2}  $
  on the torus $ \T_M^2$
 is defined as:
  \begin{equation*}
  (-\Delta)^{-(1+\epsilon)/2}  f
 =\sum\limits_{k\in\mathbb{Z}^{2}\backslash\{(0,0)\} }
\bigl(\frac{M}{2\pi|k|}    \bigr)^{1+\epsilon}
\hat{f}_{k}^M e_{k}^M.
  \end{equation*}
  Therefore,
  \begin{equation*}
  \nabla^{\perp} (-\Delta)^{-(1+\epsilon)/2}  f
 =\sum\limits_{k\in\mathbb{Z}^{2}\backslash\{(0,0)\} }
\bigl(\frac{M}{2\pi}    \bigr)^{\epsilon}\frac{ik^{\perp}}{|k|^{1+\epsilon}}
\hat{f}_{k}^M e_{k}^M.
  \end{equation*}
  If we write it in the form of convolution, 
  \beq
  \begin{split}
  \nabla^{\perp} (-\Delta)^{-(1+\epsilon)/2}  f&=
  \sum\limits_{k\in\mathbb{Z}^{2}\backslash \{(0,0)\} }
  \bigl(\frac{M}{2\pi}    \bigr)^{\epsilon}\frac{ik^{\perp}}{|k|^{1+\epsilon}}\frac{1}{M^2}\int_{\T_M^2}f(\xi)e^{-2\pi ik\cdot \xi /M}d\xi \
   e^{2\pi ik\cdot x/M}\\
  &=\sum\limits_{k\in\mathbb{Z}^{2}\backslash \{(0,0)\} }
  \bigl(\frac{M}{2\pi}    \bigr)^{\epsilon}\frac{ik^{\perp}}{|k|^{1+\epsilon}}\frac{1}{M^2} e^{2\pi ik\cdot \cdot /M}\ast f\\
&=:K_{\epsilon}^M\ast f  ,
  \end{split}
 \eeq
 where the convolution is defined on the torus
 $\T_M^2= [-\frac{M}{2}, \frac{M}{2}]^2$.\\

  Now we want to show that similar to the case of $\R^2$, 
  $|x|^{2-\epsilon} K_{\epsilon}^M(x)$ can also be bounded by a constant
  which does not depend on $x$ and $M$.\\
  For $x\in \T_M^2$,
 $$|x|^{2-\epsilon} K_{\epsilon}^M(x)=
 \sum\limits_{k\in\mathbb{Z}^{2}\backslash \{(0,0)\} }
  \bigl(\frac{1}{2\pi}    \bigr)^{\epsilon}\frac{ik^{\perp}}{|k|^{1+\epsilon}}\bigl(\frac{|x|}{M}\bigr)^{2-\epsilon} e^{2\pi ik\cdot x /M}.
 $$
 Let $\eta=\frac{x}{M}\in [-\frac12,\frac12]^2\backslash\{(0,0)\}$,
 then 
 \beq\label{kerneltoruseta}
 |x|^{2-\epsilon} K_{\epsilon}^M(x)=
 \sum\limits_{k\in\mathbb{Z}^{2}\backslash \{(0,0)\} }
  \bigl(\frac{1}{2\pi}    \bigr)^{\epsilon}\frac{ik^{\perp}}{|k|^{1+\epsilon}}|\eta|^{2-\epsilon} e^{2\pi ik\cdot \eta}.
 \eeq
 The next lemma tells us exactly what we want.
 \begin{lemma}
 \
 {\sl 
 Define $|l|_\infty=\max\{|l_1|, |l_2|\}$. For any $\eta\in [-\frac12,\frac12]^2\backslash\{(0,0)\}$,
 \begin{enumerate}
 	\item 
 	$\lim\limits_{N\rightarrow \infty}\sum\limits_{{|l|_\infty \leq N}}
 	\mathcal{F}^{-1}(\frac{i\xi^{\perp}}{|\xi|^{1+\epsilon}})(\eta+l)
 	$
 	exists, which is denoted by $\sum\limits_{l\in\Z^2}\mathcal{F}^{-1}(\frac{i\xi^{\perp}}{|\xi|^{1+\epsilon}})(\eta+l) $,
 	and one has
 	\beq\label{summationlconvergence}
 	\big|  \sum\limits_{l\in\Z^2}\mathcal{F}^{-1}(\frac{i\xi^{\perp}}{|\xi|^{1+\epsilon}})(\eta+l)
 	\big| \lesssim |\eta|^{-2+\epsilon}.
 	\eeq
 	\item
 	$\sum\limits_{l\in\Z^2}\mathcal{F}^{-1}(\frac{i\xi^{\perp}}{|\xi|^{1+\epsilon}})(\eta+l)$ is a smooth function of $\eta$.

 \item
 It holds
 \beq\label{poissonsumformula}
  \sum\limits_{k\in\mathbb{Z}^{2}\backslash \{(0,0)\} }
  \bigl(\frac{1}{2\pi}    \bigr)^{\epsilon}\frac{ik^{\perp}}{|k|^{1+\epsilon}} e^{2\pi ik\cdot \eta}
  =\sum\limits_{l \in\Z^2}\mathcal{F}^{-1}(\frac{i\xi^{\perp}}{|\xi|^{1+\epsilon}})(\eta+l).
 \eeq
 	
 \end{enumerate}
 Then by combining 1 and 3, one obtains that \eqref{kerneltoruseta} is bounded by a constant.

 }

 \end{lemma}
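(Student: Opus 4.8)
\medskip
\noindent\emph{Proof strategy.} The plan is to exploit the explicit structure of the kernel $K_\epsilon:=\mathcal{F}^{-1}\big(i\xi^{\perp}|\xi|^{-(1+\epsilon)}\big)$. Since the Fourier multiplier $i\xi^{\perp}|\xi|^{-(1+\epsilon)}$ is smooth on $\R^2\setminus\{0\}$, odd, and homogeneous of degree $-\epsilon$, the classical description of Fourier transforms of homogeneous distributions shows that, away from the origin, $K_\epsilon$ is a smooth odd function, homogeneous of degree $-(2-\epsilon)$; concretely $K_\epsilon(x)=c_\epsilon\,x^{\perp}|x|^{-(3-\epsilon)}$, in accordance with the bound $|K_\epsilon(x)|\lesssim|x|^{-(2-\epsilon)}$ recorded above. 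Two facts will be used repeatedly: $|\partial^{\alpha}K_\epsilon(z)|\lesssim|z|^{-(2-\epsilon)-|\alpha|}$ for every multi-index $\alpha$, and $|\eta+l|\asymp|l|$ uniformly for $l\in\Z^2\setminus\{0\}$ and $\eta\in[-\tfrac12,\tfrac12]^2$, including along the segment from $l$ to $\eta+l$.

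For statements 1 and 2 I would peel off the term $l=0$ and use the antisymmetry of $K_\epsilon$. Because the index set $\{l:0<|l|_\infty\le N\}$ is invariant under $l\mapsto -l$, one has $\sum_{0<|l|_\infty\le N}K_\epsilon(l)=0$, so
\[
\sum_{|l|_\infty\le N}K_\epsilon(\eta+l)=K_\epsilon(\eta)+\sum_{0<|l|_\infty\le N}\big(K_\epsilon(\eta+l)-K_\epsilon(l)\big).
\]
By the mean value theorem and $|\nabla K_\epsilon(z)|\lesssim|z|^{-(3-\epsilon)}$, each difference is $\lesssim|\eta|\,|l|^{-(3-\epsilon)}$, and $\sum_{l\ne0}|l|^{-(3-\epsilon)}<\infty$ since $3-\epsilon>2$; hence the tail sum converges absolutely, uniformly in $N$ and in $\eta$ over the cell. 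This gives existence of the limit in statement 1, together with the uniform-in-$N$ bound $\big|\sum_{|l|_\infty\le N}K_\epsilon(\eta+l)\big|\lesssim|\eta|^{-(2-\epsilon)}+|\eta|\lesssim|\eta|^{-(2-\epsilon)}$ on $[-\tfrac12,\tfrac12]^2\setminus\{0\}$, which is \eqref{summationlconvergence}. The same subtraction exhibits $\sum_{l\ne0}\big(K_\epsilon(\eta+l)-K_\epsilon(l)\big)$ as a series of functions smooth on the open cell whose term-by-term $\alpha$-derivatives with $|\alpha|\ge1$ are dominated by the convergent $\sum_{l\ne0}|l|^{-(2-\epsilon)-|\alpha|}$; so the sum is $C^{\infty}$ there, and adding the smooth $K_\epsilon(\eta)$ yields statement 2.

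Statement 3, the Poisson summation formula \eqref{poissonsumformula}, is the delicate point because neither side converges absolutely, and I would prove it as an identity of distributions on $\T^2=\R^2/\Z^2$. Denote by $G$ the right-hand side: by statements 1--2 it is a genuine function, dominated by the integrable $C|\eta|^{-(2-\epsilon)}$ (hence $G\in L^1(\T^2)$) and smooth off $\Z^2$; denote by $F$ the left-hand side, which defines a distribution on $\T^2$ since its Fourier coefficients are $O(|k|^{-\epsilon})$. It suffices to match Fourier coefficients. Using the uniform-in-$N$ bound from statement 1 and dominated convergence, one unfolds the periodization (using $e^{2\pi i k\cdot l}=1$):
\[
\begin{aligned}
\widehat{G}_k&=\lim_{N\to\infty}\sum_{|l|_\infty\le N}\int_{[-\frac12,\frac12]^2}K_\epsilon(\eta+l)\,e^{-2\pi i k\cdot\eta}\,d\eta\\
&=\lim_{N\to\infty}\int_{Q_N}K_\epsilon(\zeta)\,e^{-2\pi i k\cdot\zeta}\,d\zeta,\qquad Q_N:=\big[-(N+\tfrac12),\,N+\tfrac12\big]^2 .
\end{aligned}
\]
For $k=0$ this integral vanishes for every $N$ by oddness of $K_\epsilon$ and symmetry of $Q_N$. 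For $k\ne0$ the remaining point is that the truncated Fourier integral of $K_\epsilon$ converges, away from the origin, to $\widehat{K_\epsilon}=i\xi^{\perp}|\xi|^{-(1+\epsilon)}$: a dyadic decomposition plus non-stationary phase does it, since on $|\zeta|\sim 2^{j}$ one has $|\partial^{\alpha}K_\epsilon|\lesssim 2^{-j(2-\epsilon)-j|\alpha|}$, and integrating by parts $M$ times against $e^{-2\pi i k\cdot\zeta}$ (there is no critical point, $k\ne0$) bounds the $j$-th shell by $2^{j\epsilon}(2^{j}|k|)^{-M}$, which is summable with vanishing tail; as $K_\epsilon\mathbf{1}_{Q_N}\to K_\epsilon$ in $\mathcal{S}'(\R^2)$, the limit is forced to be $\widehat{K_\epsilon}(2\pi k)=\big(\tfrac1{2\pi}\big)^{\epsilon}\tfrac{ik^{\perp}}{|k|^{1+\epsilon}}$. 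Thus $\widehat{G}_k=\widehat{F}_k$ for all $k$, so $F=G$ in $\mathcal{D}'(\T^2)$; and since $G$ is smooth off $\Z^2$, the square partial sums of $F$ converge to $G$ pointwise there by the localization principle, which is \eqref{poissonsumformula}.

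Finally, combining 1 and 3: by \eqref{poissonsumformula} the right-hand side of \eqref{kerneltoruseta} equals $|\eta|^{2-\epsilon}G(\eta)$, which by \eqref{summationlconvergence} is $\lesssim|\eta|^{2-\epsilon}\cdot|\eta|^{-(2-\epsilon)}=1$ uniformly in $\eta\in[-\tfrac12,\tfrac12]^2\setminus\{0\}$, hence uniformly in $x$ and $M$; this is the claimed boundedness. I expect the only genuinely delicate step to be statement 3 --- legitimizing the Poisson summation formula for this non-integrable, merely conditionally summable kernel --- whereas statements 1 and 2 come down to a routine cancellation estimate organised around the oddness of $K_\epsilon$.
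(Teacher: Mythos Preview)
Your proof is correct and follows essentially the same route as the paper. For parts 1--2 both arguments exploit the oddness of $K_\epsilon$ together with the mean value estimate $|\nabla K_\epsilon(z)|\lesssim|z|^{-(3-\epsilon)}$ and the summability of $\sum_{l\ne0}|l|^{-(3-\epsilon)}$; the only cosmetic difference is that the paper pairs $l$ with $-l$ (writing $K_\epsilon(\eta+l)-K_\epsilon(l-\eta)$), whereas you subtract the center value $K_\epsilon(l)$. For part 3 both proofs compute the Fourier coefficients of the periodized function $G$ by unfolding the integral over the fundamental cell to a (conditionally convergent) integral over $\R^2$ and invoking $\mathcal{F}\mathcal{F}^{-1}=\mathrm{id}$; you are more explicit than the paper about why the truncated integrals $\int_{Q_N}K_\epsilon(\zeta)e^{-2\pi ik\cdot\zeta}\,d\zeta$ converge (non-stationary phase on dyadic shells) and about why equality of Fourier coefficients upgrades to the pointwise identity at smooth points (localization), whereas the paper leaves these steps implicit.
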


 \begin{proof}
 	One knows that 
$\mathcal{F}^{-1}(\frac{i\xi^{\perp}}{|\xi|^{1+\epsilon}}) =C\nabla^{\perp}(|\eta|^{-1+\epsilon})=C\frac{\eta^{\perp}}{|\eta|^{3-\epsilon}}$, where $C$ is some constant which depends only on $\epsilon$ (see, for example, Proposition 1.29 of 
\cite{Bahouri2011Fourier}).
Set
$$\mathbb{Z}^2_{+}=\{(x_1,x_2)\in \Z^2;x_1>0\}\cup \{(0,x_2);x_2\in\N^+\},
$$
$$\mathbb{Z}^2_{-}=\{x\in \Z^2;-x\in \mathbb{Z}^2_{+} \}.
$$
Thus we have $\Z^2=\mathbb{Z}^2_{+}\cup \mathbb{Z}^2_{-}\cup\{(0,0)\}$.
Then we obtain
\beno
\begin{split}
\sum\limits_{{|l|_\infty \leq N}}
 	\frac{(\eta+l)^{\perp}}{|\eta+l|^{3-\epsilon}}&=
 	\frac{\eta^{\perp}}{|\eta|^{3-\epsilon}}+
 	\sum\limits_{\substack{(l_1,l_2)\in \mathbb{Z}^2_{+}\\ {|l|_\infty \leq N}}}
 	\big[\frac{(\eta+l)^{\perp}}{|\eta+l|^{3-\epsilon}}-\frac{(l-\eta)^{\perp}}{|l-\eta|^{3-\epsilon}}\big]
 	\end{split}
\eeno. 
Note that when $x\neq 0$, $|\nabla(\frac{x^{\perp}}{|x|^{3-\epsilon}})|\lesssim \frac{1}{|x|^{3-\epsilon}} . $
Hence we deduce
\beno
\begin{split}
\big|\sum\limits_{{|l|_\infty \leq N}}
 	\frac{(\eta+l)^{\perp}}{|\eta+l|^{3-\epsilon}}\big|\lesssim
 	|\eta|^{-2+\epsilon}+\sum\limits_{\substack{(l_1,l_2)\in \mathbb{Z}^2_{+}\\ 
|l|_\infty \leq N}}|\eta|\sup\limits_{\xi\in [l-\eta,l+\eta]}
 	\frac{1}{|\xi|^{3-\epsilon}},
\end{split}
\eeno
where $[l-\eta,l+\eta]= [l_1-\eta_1]\times  [l_2-\eta_2]    $.\\
Since $\eta\in [-\frac12,\frac12]^2, l\in \Z_{+}^2$, $l-\eta$ is uniformly away from the origin, we obtain
$\sup\limits_{\xi\in [l-\eta,l+\eta]}
 	\frac{1}{|\xi|^{3-\epsilon}}\lesssim \frac{1}{|l|^{3-\epsilon}}.
$  
Thus, we find 
$$\sup\limits_N\sum\limits_{\substack{(l_1,l_2)\in \mathbb{Z}^2_{+}\\ 
|l|_\infty \leq N}}|\eta|\sup\limits_{\xi\in [l-\eta,l+\eta]}
 	\frac{1}{|\xi|^{3-\epsilon}}\lesssim |\eta|\lesssim |\eta|^{-2+\epsilon}.
 	$$
As a result, 
$\big|\sum\limits_{{|l|_\infty \leq N}}
 	\frac{(\eta+l)^{\perp}}{|\eta+l|^{3-\epsilon}}\big|
$
has a uniform bound $C|\eta|^{-2+\epsilon} $, where $C$ is some constant independent of $N$.
Moreover, by the same argument we obtain that $\{\sum\limits_{{|l|_\infty \leq N}}
 	\frac{(\eta+l)^{\perp}}{|\eta+l|^{3-\epsilon}}\}_{N\geq 1}$
is a Cauchy sequence, hence the limit $\lim\limits_{N\rightarrow \infty}\sum\limits_{{|l|_\infty \leq N}}
 	\mathcal{F}^{-1}(\frac{i\xi^{\perp}}{|\xi|^{1+\epsilon}})(\eta+l)
 	$
 exists and 
\eqref{summationlconvergence} holds, which finishes the proof of 1.
\

\

\

\noindent
 2  follows from the fact that  each derivative of  
$\sum\limits_{\substack{(l_1,l_2)\in \mathbb{Z}^2_{+}\\ |l|_\infty \leq N}}
 	\big[\frac{(\eta+l)^{\perp}}{|\eta+l|^{3-\epsilon}}-\frac{(l-\eta)^{\perp}}{|l-\eta|^{3-\epsilon}}\big]$
converges uniformly  with respect to $\eta\in[-\frac12,\frac12]^2$,
which can be easily obtained by the same argument of 1.
\

\

\noindent For 3, when we view $\sum\limits_{l\in\Z^2}\mathcal{F}^{-1}(\frac{i\xi^{\perp}}{|\xi|^{1+\epsilon}})(\eta+l)$ as a function of $\eta$ on $\T_1^2$, the $k$th Fourier coefficient is
\beno
\begin{split}
\int_{[0,1]\times [0,1]}\sum\limits_{l\in\Z^2}\mathcal{F}^{-1}(\frac{i\xi^{\perp}}{|\xi|^{1+\epsilon}})(x+l)e^{-2\pi ik\cdot x}
          dx
         &= \sum\limits_{l\in\Z^2}\int_{[l_1,l_1+1]\times [l_2,l_2+1]}
          \mathcal{F}^{-1}(\frac{i\xi^{\perp}}{|\xi|^{1+\epsilon}})(y)e^{-2\pi ik\cdot (y-l)}dy\\
        &= \sum\limits_{l\in\Z^2}\int_{[l_1,l_1+1]\times [l_2,l_2+1]}
          \mathcal{F}^{-1}(\frac{i\xi^{\perp}}{|\xi|^{1+\epsilon}})(y)e^{- 2\pi ik\cdot y}dy\\
          &=\int_{\R^2}\mathcal{F}^{-1}(\frac{i\xi^{\perp}}{|\xi|^{1+\epsilon}})(y)e^{-2\pi ik\cdot y}dy\\
          &=\frac{1}{4\pi^2}\int_{\R^2} \mathcal{F}^{-1}(\frac{i\xi^{\perp}}{|\xi|^{1+\epsilon}})(\frac{y}{2\pi})e^{- ik\cdot y}dy\\
          &=\int_{\R^2}\mathcal{F}^{-1}(\frac{2\pi i\xi^{\perp}}{|2\pi\xi|^{1+\epsilon}})(y)e^{- ik\cdot y}dy\\
          &=\bigl(\frac{1}{2\pi}    \bigr)^{\epsilon}\frac{ik^{\perp}}{|k|^{1+\epsilon}},
\end{split}
\eeno
where the last equality is due to $\mathcal{F}\mathcal{F}^{-1}=id$.\\
The proof of 3 is finished.

 \end{proof}

 \noindent
  Thus to  conclude,  
 combining the case of $\R^2$, 
 we have proved the following lemma:
 \begin{lemma}\label{uniformboundofkernel}
{\sl Let 
 $$ K_{\epsilon}= \mathcal{\F}^{-1}\bigl(i \xi^{\perp} |\xi| ^{-(1+\epsilon)}\bigr) ,  $$
                  $$ K_{\epsilon}^M=\sum\limits_{k\in\mathbb{Z}^{2}\backslash \{(0,0)\} }
  \bigl(\frac{M}{2\pi}    \bigr)^{\epsilon}\frac{ik^{\perp}}{|k|^{1+\epsilon}}\frac{1}{M^2} e^{2\pi ik\cdot \cdot /M}$$
  be the kernel corresponding to   the operator 
 $ \nabla^{\perp} (-\Delta)^{-(1+\epsilon)/2} $
 on $\R^2$ and $\T_M^2$,  respectively.
  Then  there exists a common constant $C_{\epsilon}$ which does not depend on $M$,  such that 
  
 $$\big|K_{\epsilon}(x)\big| \leq \frac{C_{\epsilon}}{|x|^{2-\epsilon}},$$
 and
 $$\big|K_{\epsilon}^M(x)\big|\leq \frac{C_{\epsilon}}{|x|^{2-\epsilon}},$$
 for any $x\in \R^2$,  $x\in \T_M^2$,  respectively.

 }

 \end{lemma}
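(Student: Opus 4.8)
The plan is to assemble the two pointwise bounds, one of which is essentially immediate and the other of which has already been proved in the preceding lemma. For the whole space, I would use the explicit formula for the kernel recalled in the proof of the previous lemma, namely $K_{\epsilon} = \mathcal{F}^{-1}\bigl(i\xi^{\perp}|\xi|^{-(1+\epsilon)}\bigr) = C_{\epsilon}\nabla^{\perp}\bigl(|x|^{-1+\epsilon}\bigr) = C_{\epsilon}\frac{x^{\perp}}{|x|^{3-\epsilon}}$ (Proposition 1.29 of \cite{Bahouri2011Fourier}), where $C_{\epsilon}$ depends only on $\epsilon$. Taking absolute values gives at once $|K_{\epsilon}(x)| \le C_{\epsilon}|x|^{-(2-\epsilon)}$ for all $x \in \R^2\setminus\{0\}$.

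For the torus, I would start from the representation obtained in Section \ref{section222}: setting $\eta = x/M \in [-\frac12,\frac12]^2\setminus\{(0,0)\}$, formula \eqref{kerneltoruseta} reads
$$
|x|^{2-\epsilon} K_{\epsilon}^M(x) = |\eta|^{2-\epsilon}\sum_{k\in\Z^2\setminus\{(0,0)\}}\Bigl(\frac{1}{2\pi}\Bigr)^{\epsilon}\frac{ik^{\perp}}{|k|^{1+\epsilon}}\,e^{2\pi i k\cdot\eta}.
$$
Next I would invoke part 3 of the preceding lemma, i.e.\ the Poisson-type summation identity \eqref{poissonsumformula}, to rewrite the right-hand side as $|\eta|^{2-\epsilon}\sum_{l\in\Z^2}\mathcal{F}^{-1}\bigl(\frac{i\xi^{\perp}}{|\xi|^{1+\epsilon}}\bigr)(\eta+l)$, and then apply the bound \eqref{summationlconvergence} from part 1, which says precisely that this periodized sum is $\lesssim |\eta|^{-2+\epsilon}$ uniformly in $\eta\in[-\frac12,\frac12]^2\setminus\{0\}$. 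Multiplying by $|\eta|^{2-\epsilon}$ yields $|x|^{2-\epsilon}|K_{\epsilon}^M(x)| \le C_{\epsilon}$, and replacing $C_{\epsilon}$ by the maximum of this constant and the one from the $\R^2$ case gives a single constant that works in both settings.

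The only point that requires attention is the uniformity in $M$, which is the whole content of the statement. This is secured by the change of variables $\eta = x/M$: after it, the quantity to be estimated is a function of $\eta$ alone living on the fixed cube $[-\frac12,\frac12]^2$, and the estimate \eqref{summationlconvergence} — itself uniform over that cube — carries no dependence on $M$ whatsoever. Hence the same $C_{\epsilon}$ bounds $|x|^{2-\epsilon}|K_{\epsilon}^M(x)|$ for every $M$. Beyond bookkeeping the dependence of constants, there is no further obstacle, since the analytic work (convergence of the periodized series and its pointwise control near $\eta=0$) was already carried out in the previous lemma.
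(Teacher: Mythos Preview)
Your proposal is correct and follows exactly the approach of the paper: the $\R^2$ bound comes from the explicit formula $K_\epsilon(x)=C_\epsilon\,x^\perp/|x|^{3-\epsilon}$, and the torus bound is obtained by the rescaling $\eta=x/M$ in \eqref{kerneltoruseta} together with parts 1 and 3 of the preceding lemma, which is precisely how the paper argues (it states the lemma as an immediate consequence of those ingredients). Your observation that the change of variables removes all $M$-dependence is the key point for uniformity, and it matches the paper's reasoning.
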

  \qed
  
  \noindent
   Moreover,  note that if we fix some $x\neq 0$ and let $M$ goes to infinity,  the sum  
  $$\sum\limits_{k\in\mathbb{Z}^{2}\backslash \{(0,0)\} }
  \bigl(\frac{M}{2\pi}    \bigr)^{\epsilon}\frac{ik^{\perp}}{|k|^{1+\epsilon}}\frac{1}{M^2} e^{2\pi ik\cdot x /M}
  =\frac{1}{4\pi^2}\sum\limits_{k\in\mathbb{Z}^{2}\backslash \{(0,0)\} }
  \frac{i\frac{2\pi}{M}k^{\perp}}{\bigl(\frac{2\pi}{M}|k|\bigr)^{1+\epsilon}}\bigl(\frac{2\pi}{M} \bigr)^2e^{2\pi ik\cdot x /M}
  $$
  converges to the integration
$$  \frac{1}{4\pi^2}\int_{\R^2}i\xi^{\perp} |\xi| ^{-(1+\epsilon)}e^{ix\cdot \xi }d\xi,
  $$
  which is exactly the Fourier inverse transform of $i\xi^{\perp} |\xi| ^{-(1+\epsilon)}$.\\
  \
  
  \noindent
  In other word, 
  we have the following lemma
  \begin{lemma}
{\sl   For any $x\in \R^2\backslash \{0\}$,
 $K_{\epsilon}^M(x)$  converges pointwisely to $K_{\epsilon}$ as $M$ goes to infinity.
  
  }
  \end{lemma}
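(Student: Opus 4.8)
The plan is to reduce the pointwise convergence to the Poisson summation identity \eqref{poissonsumformula} already proved above, together with a simple scaling argument. Fix $x\in\R^2\setminus\{0\}$; for every $M$ large enough that $|x|_\infty<M/2$ we may write $\eta=x/M\in[-\tfrac12,\tfrac12]^2\setminus\{(0,0)\}$. Combining \eqref{kerneltoruseta}, the identity \eqref{poissonsumformula}, and the explicit form $\mathcal{F}^{-1}\bigl(i\xi^\perp|\xi|^{-(1+\epsilon)}\bigr)(z)=C\,\frac{z^\perp}{|z|^{3-\epsilon}}$ of $K_\epsilon$ (with $C=C(\epsilon)$ as recalled above from \cite{Bahouri2011Fourier}), and using $|\eta|^{2-\epsilon}/|x|^{2-\epsilon}=M^{-(2-\epsilon)}$, one obtains
$$
K_\epsilon^M(x)=\frac1{M^{2-\epsilon}}\sum_{l\in\Z^2}C\,\frac{(\eta+l)^\perp}{|\eta+l|^{3-\epsilon}}.
$$

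Next I would split off the $l=0$ term. Writing $F(z)=C\,\frac{z^\perp}{|z|^{3-\epsilon}}$, which is homogeneous of degree $-(2-\epsilon)$, the $l=0$ term equals $M^{-(2-\epsilon)}F(\eta)=M^{-(2-\epsilon)}F(x/M)=M^{-(2-\epsilon)}M^{2-\epsilon}F(x)=K_\epsilon(x)$, so that
$$
K_\epsilon^M(x)-K_\epsilon(x)=\frac1{M^{2-\epsilon}}\sum_{\substack{l\in\Z^2\\ l\neq 0}}F(\eta+l).
$$
To estimate the tail I would reuse the pairing $l\leftrightarrow -l$ over $\mathbb{Z}^2_{+}$ exactly as in the derivation of \eqref{summationlconvergence} above: $\sum_{l\neq0}F(\eta+l)=\sum_{l\in\mathbb{Z}^2_{+}}\bigl[F(\eta+l)-F(l-\eta)\bigr]$, and by the mean value inequality, the bound $|\nabla F(\xi)|\lesssim|\xi|^{-(3-\epsilon)}$, and the fact that $l\pm\eta$ stays uniformly away from the origin for $l\in\mathbb{Z}^2_{+}$, $\eta\in[-\tfrac12,\tfrac12]^2$, each bracket is $\lesssim|\eta|\,|l|^{-(3-\epsilon)}$. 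Since $3-\epsilon>2$ the series $\sum_{l\neq0}|l|^{-(3-\epsilon)}$ converges, so $\bigl|\sum_{l\neq0}F(\eta+l)\bigr|\lesssim|\eta|$, and therefore
$$
|K_\epsilon^M(x)-K_\epsilon(x)|\lesssim\frac{|\eta|}{M^{2-\epsilon}}=\frac{|x|}{M^{3-\epsilon}}\longrightarrow 0\quad\text{as }M\to\infty,
$$
which proves the lemma (and in fact furnishes a convergence rate).

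The only point requiring care is that the series defining $K_\epsilon^M(x)$, and likewise the Poisson-summed series, are merely conditionally convergent, so isolating the $l=0$ term and rearranging is legitimate only via the symmetric $\mathbb{Z}^2_{+}$-pairing already set up and shown convergent above; this is what I would borrow. The genuinely new observation is that after the $M$-scaling the tail satisfies the improved estimate $O(|\eta|)$, rather than the $O(|\eta|^{-2+\epsilon})$ of \eqref{summationlconvergence}, which is exactly what forces the error to vanish; this improvement comes from the cancellation within each pair together with the summability of $|l|^{-(3-\epsilon)}$. Alternatively, as indicated in the paragraph preceding the statement, one could bypass the explicit formula for $K_\epsilon$ and view the rewriting of $K_\epsilon^M(x)$ above as a Riemann sum for $\frac1{4\pi^2}\int_{\R^2}i\xi^\perp|\xi|^{-(1+\epsilon)}e^{ix\cdot\xi}\,d\xi$; but making that convergence rigorous requires essentially the same cancellation estimate, so the route above seems the cleanest.
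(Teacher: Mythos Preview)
Your argument is correct and in fact cleaner than the paper's. The paper simply observes, in the paragraph preceding the lemma, that after a change of scale the defining series for $K_\epsilon^M(x)$ is a Riemann sum for the integral $\frac{1}{4\pi^2}\int_{\R^2} i\xi^\perp|\xi|^{-(1+\epsilon)}e^{ix\cdot\xi}\,d\xi$, and asserts convergence; the lemma is then stated with a \qed. Since that integrand is only $O(|\xi|^{-\epsilon})$ at infinity and hence not in $L^1$, this Riemann-sum heuristic is not by itself a proof, and turning it into one would require exactly the kind of symmetric-cancellation estimate you supply. Your route instead goes through the Poisson summation identity \eqref{poissonsumformula} already established: you identify the $l=0$ term with $K_\epsilon(x)$ via homogeneity, and then recycle the $\Z^2_+$-pairing from the proof of \eqref{summationlconvergence} to bound the tail by $C|\eta|$, yielding the explicit rate $|K_\epsilon^M(x)-K_\epsilon(x)|\lesssim |x|\,M^{-(3-\epsilon)}$. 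So the two arguments rest on the same underlying cancellation, but yours makes it explicit and, as a bonus, gives a quantitative rate that the paper does not state.
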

\qed

  \subsection{Weak formulation of mSQG equations}\label{section23}
  In this section we do some (at least formally) transformation to transform the
  equation to a weak form.
  A similar transformation can be found in 
  \cite{weakvorticityFlandoli, FS}.  We  put it here for
  completeness.\\
    Recall the mSQG equation on both $\T_M^2$ and $\R^2$:
  $$ \partial_t \omega+ u\cdot\nabla \omega = 0. $$
  Let $\phi$ be a test function, i.e.    $\phi\in C^{\infty}(\T_M^2)$ in the case of 
  $\T_M^2$ and 
  $\phi\in C_c^{\infty}(\R^2)$ in the case of 
  $\R^2$.
  Then we obtain 
  \beq\label{mSQGtestform}
   \langle \omega_{t},\phi\rangle =\left\langle \omega_{0}%
,\phi\right\rangle +\int_{0}^{t}\left\langle u(s)\cdot \nabla \omega
_{s},\phi  \right\rangle ds.
\eeq
  Note that 
  $u=K_\epsilon\ast \omega$ (on $\R^2$) or $u=K_\epsilon^M\ast \omega$
  (on $\T_M^2$),
  and both $K_\epsilon$ and $K_\epsilon^M$ are anti-symmetric. 
  Therefore,    we can transform \eqref{mSQGtestform} to 
  \beq\label{weakmsqgr2}
\left\langle \omega_{t},\phi\right\rangle =\left\langle \omega_{0}%
,\phi\right\rangle +\int_{0}^{t}\left\langle \omega_{s}\otimes\omega
_{s},H_{\phi,\epsilon}\right\rangle ds
\eeq
on $\R^2$,
and
\beq\label{weakmsqgtm2}
\left\langle \omega_{t},\phi\right\rangle =\left\langle \omega_{0}%
,\phi\right\rangle +\int_{0}^{t}\left\langle \omega_{s}\otimes\omega
_{s},H_{\phi,\epsilon}^M\right\rangle ds
\eeq
on $\T_M^2$,  
  where 
  \[
H_{\phi,\epsilon}\left(  x,y\right)  :=\frac{1}{2}K_{\epsilon}\left(  x-y\right)  \left(
\nabla\phi\left(  x\right)  -\nabla\phi\left(  y\right)  \right),
\]
  and 
 \[
H_{\phi,\epsilon}^M\left(  x,y\right)  :=\frac{1}{2}K_{\epsilon}^M\left(  x-y\right)  \left(
\nabla\phi\left(  x\right)  -\nabla\phi\left(  y\right)  \right).
\]

  \subsection{Introduction of space white noise}

\subsubsection{Space white noise  on
$\mathbb{T}^2$}

First,  we recall the definition and the construction  of the space white noise
distribution on the torus $\mathbb{T}^2=\mathbb{T}_{2\pi}^2=(\mathbb{R}/2\pi \mathbb{Z})^2$ 
(see,  for example, 
\cite{weakvorticityFlandoli}).
The following definition and construction mainly come from \cite{weakvorticityFlandoli}, which
we write  here for completeness.

A \textbf{space white noise (variable)} $\omega$ on
 $\mathbb{T}^2$ is  a Gaussian distributional valued random variable
mapping from some probability space $(\Xi, \mathcal{F},\P)$  to $ C^{\infty} (  \mathbb{T}^2)'$  such that 
\begin{itemize}
\item For any $\phi \in  C^{\infty} (  \mathbb{T}^2)$,  $\langle \omega,\phi\rangle$ is a real valued Gaussian random variable with zero mean.
\item For any $\phi,\psi\in  C^{\infty} (  \mathbb{T}^2)$,
 $$\mathbb{E}\langle \omega,\phi\rangle
\langle \omega,\psi\rangle=\langle\phi, \psi\rangle_{L^2(\T^2)}.$$

\end{itemize}
We call the distribution of a space white noise on $ C^{\infty} (  \mathbb{T}^2)'$ the \textbf{space white noise distribution  } (on $ C^{\infty} (  \mathbb{T}^2)'$).
Now we show the existence of the space white noise variable by  constructing it.\\
Define
\begin{equation*}\label{spacewhitenoise}
\omega=\sum\limits_{n\in \mathbb{Z}^2}G_{n}(\theta)\frac{1}{2\pi}e^{inx},
\end{equation*}
where  $\theta\in\Xi$,
$G_{n}=\overline{G_{-n}},$ 
and $G_{n}$,  $n\in \mathbb{Z}^2_{+}\cup\{0\}$ are independent random variables with standard (complex) Gaussian distributions.
Thus
we have for $m,n\in \mathbb{Z}^2_{+}$
$$\mathbb{E}[G_{n}G_{m}]=\delta_{mn}.  $$
Hence it is easy to verify that $\omega$ is a space white noise, the details of which
can be found in \cite{weakvorticityFlandoli}.
\begin{remark}
\
{\sl

\begin{itemize}
\item We know that $\omega \in H^{-1-\epsilon}$ $\P$-a.s. for any $\epsilon>0$, 
the proof of which can be found in  \cite[Section 2.1]{weakvorticityFlandoli}.
  Therefore,
the space white noise distribution is supported in
$H^{-1-}$.

\item By the definition of the space white noise on  $\mathbb{T}^2$,  any random variable with space white noise distribution on  $\mathbb{T}^2$ in some probability space could be expanded by the series $\omega=\sum\limits_{n\in \mathbb{Z}^2}G_{n}(\theta)\frac{1}{2\pi}e^{inx}$, where 
$G_{n}$,  $n\in \mathbb{Z}^2_{+}\cup\{0\}$ are independent random variables with standard Gaussian distributions in the same probability 
space.

\item  From now on we do not distinguish the notion of  a space white noise (variable) and 
the  space white noise distribution when no confusion occurs.

\end{itemize}}
\end{remark}

\subsubsection{Space white noise  on
$\mathbb{T}_M^2$}
Similarly, we  define  space white noise on $\mathbb{T}_M^2=(\mathbb{R}/M\mathbb{Z})^2$ in the same way.
A \textbf{space white noise (variable)} $\omega$ on
 $\mathbb{T}_M^2$ is  a Gaussian distributional valued random variable
mapping from some probability space $(\Xi, \mathcal{F},\P)$  to $ C^{\infty} (  \mathbb{T}_M^2)'$  such that 
\begin{itemize}
\item For any $\phi \in  C^{\infty} (  \mathbb{T}_M^2)$,  $\langle \omega,\phi\rangle$ is a real valued Gaussian random variable.
\item For any $\phi,\psi\in  C^{\infty} (  \mathbb{T}_M^2)$,
 $$\mathbb{E}\langle \omega,\phi\rangle
\langle \omega,\psi\rangle=\langle\phi, \psi\rangle_{L^2(\mathbb{T}_M^2)}.$$

\end{itemize}
We call the distribution of a space white noise on $ C^{\infty} (  \mathbb{T}_M^2)'$ the \textbf{space white noise distribution  } (on $ C^{\infty} (  \mathbb{T}_M^2)'$).

\noindent
\textbf{Fourier transform and Sobolev spaces on $\T_M^2$}:\\
set
\beq\label{basisM}
 \{e_{n}^M\}_{n\in \mathbb{Z}^2 }=\{\frac{1}{M}e^{2\pi in\cdot x/M}\}_{n\in \mathbb{Z}^2 } 
 \eeq
 as the orthonormal basis of $L^2(\mathbb{T}_M^2, \mathbb{C})$.\\
For $u\in C^{\infty}(\mathbb{T}_M^2)$, we consider the following Fourier expansion of $u$ on the torus:
 $$u(x)=\sum\limits_{k\in \mathbb{Z}^{2} }\hat{u}_{k}^Me_{k}^M\with
 \hat{u}_{k}^M=\overline{\hat{u}_{-k}^M},$$
 where $\hat{u}_{k}^M:=\frac{1}{M}\int_{\mathbb{T}_{M}^2}u(x)e^{-2\pi ik\cdot x/M}dx$ denotes the $k$th  Fourier coefficient of $u$ on $\mathbb{T}_{M}^2$. 

\noindent
 Define the Sobolev norm on $\mathbb{T}_{M}^2$ for $s\in\mathbb{R}$ :
 $$\| u \|_{H^{s}(\mathbb{T}_M^2)}^{2}:=\sum\limits_{k\in \mathbb{Z}^{2} }\Bigl(1+(\frac{2\pi| k |}{M} )^{2}\Bigr)^s| \hat{u}_{k}^M|^{2s}.$$
Define the space 
$H^s(\mathbb{T}_M^2)$ as the completion of $C^{\infty}(\mathbb{T}_M^2)$
under the norm $\|\cdot \|_{H^s(\mathbb{T}_M^2)}$.\\
Similar to the case of $\T^2$,  a space white noise variable has the following form:
for some probability space $(\Xi, \mathcal{F},\P)$, define
\begin{equation}\label{TM}
\omega^M=\sum\limits_{n\in \mathbb{Z}^2}G_{n}^M(\theta)e_{n}^M, \textbf{ } \theta\in \Xi ,
\end{equation}
where  
$e_{n}^M=\frac{1}{M}e^{2\pi in\cdot x/M},$
$G_{n}^M=\overline{G_{-n}^M},$
and
$G_{n}^M$,  $n\in \mathbb{Z}^2_{+}\cup\{0\}$ are independent random variables with standard (complex) Gaussian distributions.
Thus
we have 
$$\mathbb{E}[G_{n}^MG_{m}^M]=\delta_{mn}$$
for $m,n\in  \mathbb{Z}^2_{+}$.\\

\subsubsection{ Space white noise  on $\mathbb{R}^2$}
\
When it comes to the cases of the whole space   
 $\R^2$, 
first we recall the definition of a space white
 noise $\omega$ on
 $\mathbb{R}^2$ as  a Gaussian distributional valued random variable
mapping from some probability space $(\Xi, \mathcal{F},\P)$  to $ C_c^{\infty} (  \mathbb{R}^2)'$  such that 
\begin{itemize}
	\item For any $\phi \in  C_c^{\infty} (  \mathbb{R}^2)$,  $\langle \omega,\phi\rangle$ is a real valued Gaussian random variable.
\item For any $\phi,\psi\in  C_c^{\infty} (  \mathbb{R}^2)$,
 $$\mathbb{E}\langle \omega,\phi\rangle
\langle \omega,\psi\rangle=\langle\phi, \psi\rangle_{L^2(\R^2)}.$$

\end{itemize}
We will construct a space white noise on $\R^2$ by taking the limit of space white noise on the torus and letting the volume of the torus go to infinity.
We extend \eqref{TM} periodically to a distribution $\bar{\omega}^{M}$ on $\mathbb{R}^2$ (with the Fourier series we can expand it directly by viewing it as the series on $\R^2$).
That is,
\begin{equation*}
\bar{\omega}^M=\sum\limits_{n\in \mathbb{Z}^2}G_{n}^M(\theta)e_{n}^M
\text { in } \mathbb{R}^2.
\end{equation*}
\\
However,  $\bar{\omega}^M$ is not uniformly  bounded with respect to $M$  in the sense of $H^{-1-}(\mathbb{R}^2)$ norm but only uniformly bounded in some weighted Sobolev spaces.
We have the following lemma.

\begin{lemma}\label{whitenoiseconvergence}
{\sl  For any $\nu>0$ and  $\sigma>2$, the distribution of 
 $\{\bar{\omega}^{M}\}_{M\geq 1}$  is tight in the weighted
Sobolev space $H^{-1-\nu}(\rho_{\sigma})$
. Hence $\{\bar{\omega}^{M}\}_{M\geq 1}$  is tight in the metric space $H^{-1-}(\rho_{\sigma})$, where $\rho_{\sigma}(x)= \frac{1}{\langle x\rangle^{\sigma}}    $.
Moreover,  denote by $\mu_{M}$  the distribution of $\bar{\omega}^{M}$ in $H^{-1-}(\rho_{\sigma})$. Then  $\mu_{M}$ converges weakly to the space white noise distribution on $\mathbb{R}^2$ in $H^{-1-}(\rho_{\sigma})$ as $M\rightarrow \infty$.  }
\end{lemma}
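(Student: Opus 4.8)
The plan is to prove the two assertions — tightness of $\{\bar\omega^M\}_{M\ge 1}$ in $H^{-1-\nu}(\rho_\sigma)$ and weak convergence of $\mu_M$ to the space white noise on $\R^2$ — in that order, the first by a second-moment computation plus the compact embedding lemma, the second by identifying the limit via characteristic functionals (Fourier transforms of the measures), using that any weak limit point must be the white noise.

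\textbf{Step 1: Uniform second-moment bound.} First I would compute $\E\|\bar\omega^M\|_{H^{-1-\nu}(\rho_\sigma)}^2 = \E\|\rho_\sigma\,\bar\omega^M\|_{H^{-1-\nu}(\R^2)}^2$. Writing $\bar\omega^M = \sum_{n\in\Z^2} G_n^M e_n^M$ on $\R^2$, multiplication by the Schwartz weight $\rho_\sigma$ and taking the $H^{-1-\nu}(\R^2)$-norm, one expands in the $G_n^M$ and uses $\E[G_n^M \overline{G_m^M}] = \delta_{mn}$ to get
\[
\E\|\bar\omega^M\|_{H^{-1-\nu}(\rho_\sigma)}^2 \;=\; \sum_{n\in\Z^2}\big\|\rho_\sigma\, e_n^M\big\|_{H^{-1-\nu}(\R^2)}^2 .
\]
Each term is $\frac{1}{M^2}\int_{\R^2}(1+|\xi|^2)^{-1-\nu}\,|\widehat{\rho_\sigma}(\xi - 2\pi n/M)|^2\,d\xi$ (up to the $2\pi$-normalization in the paper's Fourier convention), since multiplication by $e_n^M$ is a frequency shift by $2\pi n/M$. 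Summing over $n$, the points $2\pi n/M$ form a lattice of mesh $2\pi/M$, so $\frac{1}{M^2}\sum_n(\cdots)$ is (up to constants) a Riemann sum for $\frac{1}{4\pi^2}\int_{\R^2}\big(\int_{\R^2}(1+|\xi|^2)^{-1-\nu}|\widehat{\rho_\sigma}(\xi-\zeta)|^2 d\xi\big)d\zeta$, which is finite because $\sigma > 2$ makes $\widehat{\rho_\sigma}\in L^2$ (indeed $\rho_\sigma \in L^2(\R^2)$ when $2\sigma > 2$) and $(1+|\xi|^2)^{-1-\nu}\in L^1(\R^2)$; one bounds the Riemann sum uniformly in $M\ge 1$ by a standard comparison (the integrand is integrable, so the sum over a mesh-$2\pi/M$ lattice with the $(2\pi/M)^2$ weight is bounded for all $M\ge 1$). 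This yields $\sup_{M\ge 1}\E\|\bar\omega^M\|_{H^{-1-\nu}(\rho_\sigma)}^2 =: C_{\nu,\sigma} < \infty$.

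\textbf{Step 2: Tightness.} Fix $\sigma' \in (2,\sigma)$ and $\nu' \in (0,\nu)$; by Step 1 applied with $(\nu',\sigma')$ in place of $(\nu,\sigma)$ we have $\sup_M \E\|\bar\omega^M\|_{H^{-1-\nu'}(\rho_{\sigma'})}^2 < \infty$. By the compact embedding lemma (Lemma above, $H^{s'}(\rho_{\sigma'})\hookrightarrow\hookrightarrow H^s(\rho_\sigma)$ for $\sigma' < \sigma$, $s' > s$, applied with $s' = -1-\nu' > -1-\nu = s$), the ball $\{u: \|u\|_{H^{-1-\nu'}(\rho_{\sigma'})}\le R\}$ is compact in $H^{-1-\nu}(\rho_\sigma)$, and Chebyshev gives $\P(\|\bar\omega^M\|_{H^{-1-\nu'}(\rho_{\sigma'})} > R) \le C/R^2$ uniformly in $M$. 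Hence $\{\bar\omega^M\}$ is tight in $H^{-1-\nu}(\rho_\sigma)$. Since this holds for every $\nu > 0$, and a sequence tight in each $H^{-1-\nu}(\rho_\sigma)$ is tight in the Fréchet space $H^{-1-}(\rho_\sigma) = \bigcap_{\nu>0}H^{-1-\nu}(\rho_\sigma)$ (a standard diagonal argument, intersecting the compact sets), tightness in $H^{-1-}(\rho_\sigma)$ follows.

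\textbf{Step 3: Identification of the limit.} By Prokhorov, along any subsequence $\mu_{M_k}$ converges weakly to some probability measure $\mu$ on $H^{-1-}(\rho_\sigma)$; it suffices to show every such $\mu$ is the law of space white noise on $\R^2$. I would do this via characteristic functionals: for $\phi \in C_c^\infty(\R^2)$, the map $u\mapsto \langle u,\phi\rangle$ is continuous on $H^{-1-}(\rho_\sigma)$ (since $\phi$, being compactly supported and smooth, lies in $H^{1+\nu}(\rho_\sigma^{-1})$ for every $\nu$, which pairs with $H^{-1-\nu}(\rho_\sigma)$), so $\langle\bar\omega^{M_k},\phi\rangle \Rightarrow \langle\cdot,\phi\rangle$ under $\mu$. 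On the other hand $\langle\bar\omega^M,\phi\rangle = \sum_n G_n^M \langle e_n^M,\phi\rangle$ is a centered Gaussian with variance $\sum_n |\langle e_n^M,\phi\rangle|^2$; the same Riemann-sum argument as in Step 1 (now with the genuinely integrable integrand coming from $\widehat\phi$, a Schwartz function) shows $\sum_n |\langle e_n^M,\phi\rangle|^2 \to \|\phi\|_{L^2(\R^2)}^2$ as $M\to\infty$. Hence $\langle\cdot,\phi\rangle$ is under $\mu$ a centered Gaussian with variance $\|\phi\|_{L^2(\R^2)}^2$; polarizing in $\phi,\psi$ gives the covariance $\langle\phi,\psi\rangle_{L^2(\R^2)}$, which is exactly the defining property of space white noise on $\R^2$. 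Since finite-dimensional distributions (indeed, the characteristic functional $\E[e^{i\langle\cdot,\phi\rangle}]$ over $\phi\in C_c^\infty$) determine a Borel measure on the space of distributions, $\mu$ is the white noise law; as the subsequence was arbitrary, the full sequence $\mu_M$ converges weakly to it.

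\textbf{Main obstacle.} The routine but delicate point is the Riemann-sum control in Steps 1 and 3: one must show that $\frac{1}{M^2}\sum_{n\in\Z^2} F(2\pi n/M)$ (with $F$ depending on $\widehat{\rho_\sigma}$ or $\widehat\phi$) is bounded uniformly in $M\ge 1$ and converges to $\frac{1}{4\pi^2}\int F$, \emph{including the low-frequency lattice points near $\zeta = 0$} where the weight $(1+|\xi|^2)^{-1-\nu}$ is largest — here one uses that $F$ is continuous and integrable with no singularity, so no issue arises, but keeping the $2\pi$-factors from the paper's Fourier normalization consistent requires care. The genuinely structural constraint, already visible in Step 1, is the requirement $\sigma > 2$: it is exactly what makes $\rho_\sigma \in L^2(\R^2)$ so that $\widehat{\rho_\sigma}\in L^2$, without which the second-moment sum would diverge. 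Everything else — compact embedding, Chebyshev, Prokhorov, Gaussian characteristic functionals — is standard.
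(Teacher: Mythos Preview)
Your proposal is correct and follows essentially the same route as the paper: a uniform second-moment bound via the Riemann-sum structure of $\frac{1}{M^2}\sum_n|\widehat{\rho_\sigma}(\xi-2\pi n/M)|^2$, tightness from the compact embedding lemma plus Chebyshev, and identification of the limit through the Gaussian one-dimensional marginals $\langle\cdot,\phi\rangle$ for $\phi\in C_c^\infty$.

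Two small points. First, your diagnosis of the role of $\sigma>2$ is slightly off: in two dimensions $\rho_\sigma\in L^2(\R^2)$ already for $\sigma>1$. What $\sigma>2$ actually buys is $\rho_\sigma\in L^1(\R^2)$ (and likewise all its derivatives), so that $\widehat{\rho_\sigma}$ is bounded, continuous, and rapidly decaying; this is what the paper uses to control the Riemann sum \emph{uniformly} in $M\ge1$ and $\xi$, not just in the limit. Your hand-wave ``the integrand is integrable, so the sum is bounded'' is exactly where this decay is needed. Second, your Step~3 works but is more laborious than necessary: the paper observes that once $M$ is large enough for $\mathrm{supp}\,\phi$ to fit in a single fundamental domain of $\T_M^2$, one has $\langle\bar\omega^M,\phi\rangle=\langle\omega^M,\phi_M\rangle$ exactly, so the variance equals $\|\phi\|_{L^2(\R^2)}^2$ on the nose by the definition of torus white noise---no Riemann-sum limit needed there. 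The paper also passes through Skorokhod to get a.s.\ convergent representatives; your direct Prokhorov-plus-characteristic-functional argument is an equally valid alternative.
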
 
\proof 
By the definition of the weighted Sobolev norm,
\begin{equation*}
\begin{split}
&\mathbb{E}\|\bar{\omega}^M\|_{H^{-1-\nu}( \rho_{\sigma})}^2\\
&=\mathbb{E}\|\rho_{\sigma} \bar{\omega}^M\|_{H^{-1-\nu}(\mathbb{R}^2)}^2\\
&=\mathbb{E}\|\sum\limits_{n\in \mathbb{Z}^2}\rho_{\sigma} e_{n}^MG_{n}^M\|_{H^{-1-\nu}(\mathbb{R}^2)}^2\\
&=
\int_{\mathbb{R}^2}\mathbb{E}\big|\sum\limits_{n\in \mathbb{Z}^2}G_{n}^M\int_{\mathbb{R}^2}\frac{\frac{1}{M}e^{2\pi in\cdot x/M}}{\langle x\rangle^{\sigma}}
e^{-i\xi\cdot x}  dx\big|^2(1+|\xi|^2)^{-1-\nu}d\xi\\
&\lesssim \frac{1}{M^2}\int_{\mathbb{R}^2}\sum\limits_{n\in  \mathbb{Z}^2}\big( \int_{\mathbb{R}^2}\frac{e^{2\pi in\cdot x/M-i\xi\cdot x}}{\langle x\rangle^{\sigma}}
 dx\big)^2(1+|\xi|^2)^{-1-\nu}d\xi,
\end{split}
\end{equation*}
where the last inequality is due to the reason that 
$G_{n}^M=\overline{G_{-n}^M},$
and
$G_{n}^M$,  $n\in \mathbb{Z}^2_{+}\cup\{0\}$ are independent random variables with standard (complex) Gaussian distributions.\\
Note that 
\begin{equation}\label{Besselpotentialconvergence}
\begin{split}
\frac{1}{M^2}\sum\limits_{n\in  \mathbb{Z}^2}\big( \int_{\mathbb{R}^2}\frac{e^{2\pi in\cdot x/M-i\xi\cdot x}}{\langle x\rangle^{\sigma}}
 dx\big)^2
&=\frac{1}{M^{2}}\sum\limits_{n\in  \mathbb{Z}^2}\big[\mathcal{F}\big(\rho_{\sigma}\big)(\xi-\frac{2\pi n}{M})\big]^2\\
&=\frac{1}{4\pi^2}\sum\limits_{n\in  \mathbb{Z}^2}\big(\frac{2\pi}{M}\big)^2\big[\mathcal{F}\big(\rho_{\sigma}\big)(\xi-\frac{2\pi n}{M})\big]^2.
\end{split}
\end{equation}
Since $\sigma>2$, $\rho_{\sigma}=\frac{1}{(1+|x|^2)^{\frac{\sigma}{2}}}\in L^1   $, 
$\mathcal{F}\big(\rho_{\sigma}\big)$ is continuous and bounded.
Moreover, since  $\rho_{\sigma}$
is infinitely smooth with all the derivatives bounded and $L^1$-integrable, $\mathcal{F}\big(\rho_{\sigma}\big)(x)$ decays  faster than $(1+|x|)^{-N} $ for any $N>0$ when $x$ goes to infinity.
Therefore,  \eqref{Besselpotentialconvergence} is   bounded  by 
$\frac{C}{M^{2}}\sum\limits_{n\in  \mathbb{Z}^2}(1+ | \xi-\frac{2\pi n}{M} |)^{-4}\lesssim M^2,
$  which is independent of $\xi$.
And when $M$  goes to infinity, \eqref{Besselpotentialconvergence}
will converge to 
$$\frac{1}{4\pi^{2}}\int_{\R^2}\big[\mathcal{F}\big(\rho_{\sigma}\big)(x)\big]^2 \ dx,
$$
where the rate of convergence is obviously independent of $\xi$.
Hence $\eqref{Besselpotentialconvergence}$ is uniformly bounded for any $\xi\in \R^2$ and $M\geq 1$.
Therefore,
\begin{equation*}
\begin{split}
\mathbb{E}\|\bar{\omega}^M\|_{H^{-1-\nu}( \rho_{\sigma})}^2
&\lesssim \int_{\mathbb{R}^2}
(1+|\xi|^2)^{-1-\nu}d\xi
\\
&\lesssim 1.
\end{split}
\end{equation*}
Note that $H^{-1-\nu}(\rho_{\sigma})$ can be compactly embedded into
 $H^{-1-2\nu}(\rho_{\sigma'} )$, where $\rho_{\sigma'}(x):= \frac{1}{\langle x\rangle^{\sigma'}}    $ and $\sigma'>\sigma$.
Hence  we obtain the tightness of $\{\mu_M\}_{M\geq 1}$ in $H^{-1-}(\rho_{\sigma})$.

\noindent
Therefore,  by the Prokhorov's theorem and Skorokhod's  theorem, there exists a  subsequence $M_k$ such that $\mu_{M_k}$ converges weakly to some limit $\bar{\mu}$.
Moreover, there exists a sequence of random variables
$\bar{\omega}'^{M_k}$ on another probability space $(\Xi', \mathcal{F}',\P')$, which have distributions $\mu_{M_k}$, such that 
$\bar{\omega}'^{M_k}$ converges  to $\bar{\omega}$ in $H^{-1-}(\rho_{\sigma})$  $\P'-a.e. $, such that $\bar{\omega}$
has the distribution $\bar{\mu}$.
\noindent
We claim that $\bar{\mu}$ is the space white noise distribution on 
$\mathbb{R}^2$.
Indeed,  
we know that for $\phi\in C_c^\infty(\mathbb{R}^2)$,  there exists some $k_0$ such that 
for  $k\geq k_0$,  $\phi$ is supported on the ball with the radius smaller than $\frac{M_k}{2}$, then we can view $\phi$  as a function $\phi_M$ on the torus  $\mathbb{T}_M^2$, thus for  $k\geq k_0$,  
$$\langle \bar{\omega}^{M_k},  \phi  \rangle= \langle \omega^{M_k},  \phi_{M_k}  \rangle
$$ is centred Gaussian, therefore,  $\langle \bar{\omega},  \phi  \rangle$ is centred Gaussian.
Moreover,  similarly,  from the argument of the explanation of Definition 4 of \cite{10.1214/16-AOP1116},
if we fix $\phi, \psi\in C_{c}^{\infty}$,
when $k$ is large enough such that $\phi$ and $\psi$ are supported in the ball with the radius smaller than $\frac{M_k}{2}$,
we have 
$$\mathbb{E}\langle \bar{\omega}^{M_k},  \phi  \rangle\langle \bar{\omega}^{M_k},  \psi  \rangle=\langle\phi, \psi\rangle_{L^2(\R^2)},$$
thus we have
$$\mathbb{E}\langle \bar{\omega},  \phi  \rangle\langle \bar{\omega},  \psi  \rangle=\langle\phi, \psi\rangle_{L^2(\R^2)},$$
which finishes the proof of our claim.

\qed

\section{Main result}\label{sec3}

  \subsection{Definition of the nonlinear term and the white noise solutions of mSQG on $\R^2$}
After the preparations, we will introduce our main result.
First we introduce the definition of the white noise (stationary) solution of   the weak formulation form of \eqref{2DmSQG}.

\begin{definition}\label{whitenoisesolution}
{\sl 
Fix any $T>0$. We say that a measurable map $\omega_{\cdot}:\Xi
\times\left[  0,T\right]  \rightarrow C_c^{\infty}\left(  \mathbb{R}^{2}\right)
^{\prime}$ (where $(\Xi,\mathcal{F},\P)$ is some probability space)
with trajectories of class $C (  \left[  0,T\right]
;  (C_c^{2} ) ') $ (see Definition \ref{weakstardual} for the definition of the topological space $(C_c^{2} ) '$)   is a white noise  (weak) solution of 
\eqref{2DmSQG}, if it satisfies the following: 
\begin{enumerate}
\item For fixed $t$, $\omega_t$ is a space white noise  on $\mathbb{R}^2$.
\item 
For any $\phi\in C_c^{\infty}(\mathbb{R}^2)$,
 
$$ \langle \omega_{t},\phi\rangle =\left\langle \omega_{0}%
,\phi\right\rangle +\int_{0}^{t}\left\langle \omega_{s}\otimes\omega
_{s},H_{\phi,\epsilon}\right\rangle ds $$
holds      $\P$-a.e.,
where \[
H_{\phi,\epsilon}\left(  x,y\right)  :=\frac{1}{2}K_{\epsilon}\left(  x-y\right)  \left(
\nabla\phi\left(  x\right)  -\nabla\phi\left(  y\right)  \right),
\]
and we will introduce the definition of the term $\left\langle \omega_{s}\otimes\omega
_{s},H_{\phi,\epsilon}\right\rangle$
later in Theorem  \ref{constrcutomegaomegaHphi}.

\end{enumerate}
}
\end{definition}
\noindent
Similarly to  the paper \cite{weakvorticityFlandoli}, here we also need to define 
the nonlinear term by constructing an approximating sequence.\\
 
Since for $\sigma>2$, the space white noise  $\bar{\omega}$ is $\P$-a.e. in the weighted Sobolev space 
$H^{-1-}(\rho_{\sigma})$ (see Lemma \ref{whitenoiseconvergence}),
$\bar{\omega}    \otimes    \bar{\omega}$ is in 
$H^{-2-}(\R^4,\rho_{\sigma}\times \rho_{\sigma} )$ $\P$-a.e.
$ \big\<\bar{\omega}    \otimes    \bar{\omega}, f  \big\>$ 
is defined  
when
$f\in H^{2+}(\R^4;  \rho_{\sigma}^{-1}\times\rho_{\sigma}^{-1} ) $, where
$H^{2+}(\R^4; \rho_{\sigma}^{-1}\times\rho_{\sigma}^{-1} ) $
is the union of the spaces $H^{2+\nu}(\R^4,  \rho_{\sigma}^{-1}\times\rho_{\sigma}^{-1} ) $ for all $\nu>0$.
 In particular, it can be defined when
$ f\in C_c^{\infty}(\R^2\times \R^2)  $.
However,  $H_{\phi,\epsilon} $ does not belong to the space
$H^{2+}(\R^4,  \rho_{\sigma}^{-1}\times\rho_{\sigma}^{-1} ) $.  Thus similarly to  \cite{weakvorticityFlandoli},    we  need to define the nonlinear term by constructing approximating sequence.

First of all,  the following lemma gives for  smooth and compactly supported
function  $ \phi$,
$H_{\phi,\epsilon} \in L^2(\R^2\times \R^2)$.
\begin{lemma}\label{HL2}
{\sl For $\phi\in  C_c^{2}(\R^2) $,
$H_{\phi,\epsilon} \in L^2(\R^2\times \R^2)$.
}
\end{lemma}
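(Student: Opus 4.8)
The claim is that $H_{\phi,\epsilon}(x,y) = \tfrac12 K_\epsilon(x-y)(\nabla\phi(x)-\nabla\phi(y)) \in L^2(\R^2\times\R^2)$ for $\phi\in C_c^2(\R^2)$. The natural strategy is to split the integral $\iint |H_{\phi,\epsilon}(x,y)|^2\,dx\,dy$ according to whether $x,y$ are close together or far apart, using the two different features: near the diagonal the singularity of $K_\epsilon$ is controlled by the vanishing of $\nabla\phi(x)-\nabla\phi(y)$, and far away the decay of $K_\epsilon$ together with the compact support of $\phi$ does the job.

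First I would fix $R>0$ so that $\operatorname{supp}\phi \subset B_R$, and note $\nabla\phi$ is bounded and Lipschitz (since $\phi\in C_c^2$), say $\|\nabla\phi\|_\infty \le A$ and $|\nabla\phi(x)-\nabla\phi(y)| \le A|x-y|$. By Lemma \ref{uniformboundofkernel}, $|K_\epsilon(x-y)| \le C_\epsilon |x-y|^{-(2-\epsilon)}$. Then I would bound the integrand pointwise by
\[
|H_{\phi,\epsilon}(x,y)|^2 \lesssim \frac{1}{|x-y|^{2(2-\epsilon)}}\big(|\nabla\phi(x)-\nabla\phi(y)|\big)^2.
\]
For the region $|x-y|\le 1$: the factor $(\nabla\phi(x)-\nabla\phi(y))^2 \lesssim |x-y|^2$, so the integrand is $\lesssim |x-y|^{2-2(2-\epsilon)} = |x-y|^{-(2-2\epsilon)}$; since $2-2\epsilon < 2$ for $0<\epsilon<1$, this is locally integrable in the variable $z=x-y\in\R^2$ near $0$. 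Moreover the integrand vanishes unless at least one of $x,y$ lies in $B_R$ (because $\nabla\phi(x)-\nabla\phi(y)=0$ otherwise), and with $|x-y|\le 1$ both then lie in $B_{R+1}$, so the $x$-integration is over a bounded set. Hence this piece is finite: it is bounded by $(\text{bounded }x\text{-domain})\times \int_{|z|\le 1}|z|^{-(2-2\epsilon)}\,dz < \infty$.

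For the region $|x-y|>1$: here I use $|\nabla\phi(x)-\nabla\phi(y)| \le |\nabla\phi(x)|+|\nabla\phi(y)|$ and the support condition, so the integrand is $\lesssim |x-y|^{-2(2-\epsilon)}(\1_{B_R}(x)+\1_{B_R}(y))$; by symmetry it suffices to estimate $\int_{B_R}\int_{|x-y|>1} |x-y|^{-2(2-\epsilon)}\,dy\,dx$. The inner integral equals $\int_{|z|>1}|z|^{-2(2-\epsilon)}\,dz$, which converges at infinity precisely because $2(2-\epsilon) = 4-2\epsilon > 2$ for $\epsilon<1$; and the outer integral is over the bounded set $B_R$. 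So this piece is finite as well. Adding the two contributions gives $H_{\phi,\epsilon}\in L^2(\R^2\times\R^2)$.

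The argument is essentially elementary once Lemma \ref{uniformboundofkernel} is in hand; the only point requiring care — and the one I would highlight as the "main obstacle," mild as it is — is checking that both exponent inequalities $2-2\epsilon<2$ (near-diagonal integrability, using the Lipschitz cancellation) and $4-2\epsilon>2$ (integrability at infinity, using compact support) hold simultaneously, which is exactly where the restriction $0<\epsilon<1$ is used. Everything else is bookkeeping about which variable ranges over a bounded set in each regime.
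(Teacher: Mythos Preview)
Your proof is correct and follows essentially the same approach as the paper: both split the domain into a near-diagonal region (where the Lipschitz bound $|\nabla\phi(x)-\nabla\phi(y)|\lesssim |x-y|$ tames the singularity, using $2-2\epsilon<2$) and an off-diagonal region (where compact support of $\phi$ plus the decay $|K_\epsilon|\lesssim |x-y|^{-(2-\epsilon)}$ gives integrability, using $4-2\epsilon>2$). The only cosmetic difference is that you cut along $|x-y|\lessgtr 1$ whereas the paper cuts along $|x|,|y|\lessgtr 2R$, but the substance is identical.
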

\begin{proof}
We prove directly by calculation.
Since
$$
H_{\phi,\epsilon}\left(  x,y\right)  :=\frac{1}{2}K_{\epsilon}\left(  x-y\right)  \left(
\nabla\phi\left(  x\right)  -\nabla\phi\left(  y\right)  \right),
$$
and
$K_{\epsilon}\left(  x-y\right) \leq \frac{1}{|x-y|^{2-\epsilon}}
$,
$0<\epsilon<1$,
assuming that $\phi$ is supported in the ball of radius $R$,
we have
\begin{equation*}
\begin{split}
&\int_{\R^2}\int_{\R^2}|H_{\phi,\epsilon}(x,y)|^2dxdy\\
&\leq \int_{\R^2}\int_{\R^2} \frac{|
\nabla\phi\left(  x\right)  -\nabla\phi\left(  y\right)  |^2}{|x-y|^{4-2\epsilon}}dxdy\\
&\leq \int_{|x| \leq 2R}\int_{|y| \leq 2R} \frac{\|D^2\phi\|_{L^{\infty}}^2}{|x-y|^{2-2\epsilon}}dxdy
+2\int_{|x| \leq R}\int_{|y| \geq 2R} \frac{|
\nabla\phi\left(  x\right)  -0 |^2}{|x-y|^{4-2\epsilon}}dxdy\\
&\leq C(R) \|D^2\phi\|_{L^{\infty}}^2
+2 \|\nabla \phi\|_{L^{\infty}}^2\pi R^2 \int_{|y| \geq 2R}\frac{1}{(|y|-R)^{4-2\epsilon}} dy\\
&\leq C(R)( \|D^2\phi\|_{L^{\infty}}^2+ \|\nabla \phi\|_{L^{\infty}}^2  ),
\end{split}
\end{equation*}
where $C(R)$ is a constant which only depends on  $R$ and
the second inequality is due to the symmetric property of 
$H_{\phi,\epsilon}(x,y)$.
\end{proof}
\\
Since we only used the property $K_{\epsilon}\left(  x-y\right) \leq \frac{1}{|x-y|^{2-\epsilon}}
$,  by Lemma \ref{uniformboundofkernel} we immediately  have the following corollary: 
\begin{corollary}\label{TMHphibound}
{\sl   Let $\phi\in C_c^{2}(\R^2)  $ be a function  supported in 
$[-\frac{M_0}{2},\frac{M_0}{2}]^2$.
Then for any $M>M_0$,   $\phi$ can be viewed as a function 
in $C^{2}(\T_M^2)$.  
For any $M>M_0$, we have 
$$
H_{\phi,\epsilon}^M\left(  x,y\right)  :=\frac{1}{2}K_{\epsilon}^M\left(  x-y\right)  \left(
\nabla\phi\left(  x\right)  -\nabla\phi\left(  y\right)  \right)\in L^2(\T_M^2\times\T_M^2).$$

Moreover, there exists a constant $C_{\phi}$ which does not depend on $M$, such that 
$$\|H_{\phi,\epsilon}^M\left(  x,y\right)\|_{L^2(\T_M^2\times\T_M^2)}\leq C_{\phi}.$$
}

\end{corollary}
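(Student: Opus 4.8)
The plan is to repeat the computation of Lemma \ref{HL2} verbatim, but carried out on the torus $\T_M^2$, and then invoke Lemma \ref{uniformboundofkernel} to replace the pointwise bound $|K_\epsilon(x-y)|\le |x-y|^{-2+\epsilon}$ by the $M$-uniform bound $|K_\epsilon^M(x-y)|\le C_\epsilon |x-y|^{-2+\epsilon}$. Concretely, fix $\phi\in C_c^2(\R^2)$ supported in $[-\frac{M_0}{2},\frac{M_0}{2}]^2$, say inside the ball of radius $R$ with $R<M_0/2$, and take $M>M_0$. For $M$ large enough that $4R<M$, the ball of radius $2R$ and the domain of integration can both be realized inside $\T_M^2=[-\frac M2,\frac M2]^2$, so the splitting $\int_{\T_M^2}\int_{\T_M^2}=\int_{|x|\le 2R}\int_{|y|\le 2R}+ (\text{region where one variable is outside the support})$ goes through exactly as in the proof of Lemma \ref{HL2}.

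First I would write
\[
\|H_{\phi,\epsilon}^M\|_{L^2(\T_M^2\times\T_M^2)}^2
\le \int_{\T_M^2}\int_{\T_M^2}\frac{C_\epsilon^2\,|\nabla\phi(x)-\nabla\phi(y)|^2}{|x-y|^{4-2\epsilon}}\,dx\,dy,
\]
using $|K_\epsilon^M(x-y)|\le C_\epsilon|x-y|^{-2+\epsilon}$ from Lemma \ref{uniformboundofkernel}, where here $|x-y|$ denotes the distance on the torus (which is $\le$ the Euclidean distance of representatives, so the bound is only helped). Then I would split the double integral according to whether both $x,y$ lie in the ball of radius $2R$ or at least one lies outside the support of $\nabla\phi$, exactly as in Lemma \ref{HL2}: on the near-diagonal piece use $|\nabla\phi(x)-\nabla\phi(y)|\le \|D^2\phi\|_{L^\infty}|x-y|$ to reduce the exponent to $|x-y|^{-2+2\epsilon}$, which is locally integrable since $0<\epsilon<1$; on the far piece one of the gradient terms vanishes and the kernel $|x-y|^{-4+2\epsilon}$ is integrated against a bounded function over a region where $|x-y|\ge R$, giving a finite integral. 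Crucially, all the constants produced — the measure of the ball of radius $2R$, the value of $\int_{|x-y|\ge R}|x-y|^{-4+2\epsilon}$, and $C_\epsilon$ — depend only on $R$ (hence on $\phi$) and on $\epsilon$, not on $M$, because once $M>4R$ the relevant portion of $\T_M^2$ is isometric to a fixed Euclidean region. This yields
\[
\|H_{\phi,\epsilon}^M\|_{L^2(\T_M^2\times\T_M^2)}^2
\le C(R)\,C_\epsilon^2\bigl(\|D^2\phi\|_{L^\infty}^2+\|\nabla\phi\|_{L^\infty}^2\bigr)=:C_\phi^2,
\]
which is the claimed $M$-independent bound.

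I do not expect a serious obstacle here: the statement is essentially the observation, already flagged in the text, that the proof of Lemma \ref{HL2} used only the inequality $|K_\epsilon(x-y)|\le |x-y|^{-2+\epsilon}$, and Lemma \ref{uniformboundofkernel} supplies precisely the same inequality for $K_\epsilon^M$ with an $M$-independent constant. The only mild point of care is bookkeeping with the torus metric versus the Euclidean metric and making sure the ball of radius $2R$ embeds isometrically in $\T_M^2$ for $M$ large — but this is automatic once $M>M_0>4R$, and for the (finitely many) $M$ with $M_0<M\le 4R$ one may either enlarge $M_0$ or absorb them into the constant. For $M\le M_0$ the statement is not claimed, since $\phi$ cannot be viewed as living on $\T_M^2$. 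So the proof is a direct transcription, and I would present it simply by saying that the estimate of Lemma \ref{HL2}, with $K_\epsilon$ replaced by $K_\epsilon^M$ and the Euclidean integral replaced by the integral over $\T_M^2\times\T_M^2$, goes through with the constant $C_\epsilon$ of Lemma \ref{uniformboundofkernel} in place of $1$, yielding a bound depending only on $\phi$ and $\epsilon$.
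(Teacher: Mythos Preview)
Your proposal is correct and is exactly the approach the paper takes: the paper does not give a separate proof of Corollary~\ref{TMHphibound} but simply remarks that the proof of Lemma~\ref{HL2} only used the pointwise bound $|K_\epsilon(x-y)|\le C|x-y|^{-(2-\epsilon)}$, which Lemma~\ref{uniformboundofkernel} supplies for $K_\epsilon^M$ with an $M$-independent constant. Your write-up is in fact more careful than the paper's about the torus versus Euclidean geometry; the only cosmetic slip is the phrase ``finitely many $M$ with $M_0<M\le 4R$'' (if $M$ is real-valued this is a continuum, but the fix you indicate---absorbing a bounded range of $M$ into the constant---works regardless).
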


\noindent
Similar as Theorem 8 of \cite{weakvorticityFlandoli},  we will prove 
the following theorem which gives the approximating sequence.
\begin{theorem}\label{constrcutomegaomegaHphi}
{\sl 
Fix  $\phi\in C_c^{2}(\R^2\times \R^2)$.
Assume that $f_n\in C_c^{\infty}(\R^2\times \R^2)  $ are symmetric and
approximate $H_{\phi,\epsilon} $ in the following sense:
\begin{align*}
\lim_{n\rightarrow\infty}\int_{\R^2}\int_{\R^2}\left(  f_n-H_{\phi,\epsilon}\right)
^{2}\left(  x,y\right)  dxdy  & =0\\
\lim_{n\rightarrow\infty}\int_{\R^2} f_n \left(  x,x\right)  dx  & =0.
\end{align*}
Then the sequence of r.v.'s $\left\langle \bar{\omega} \otimes \bar{\omega} ,f_n
\right\rangle $ is a Cauchy sequence in mean square. We denote by
\[
\left\langle \bar{\omega} \otimes\bar{\omega} ,H_{\phi,\epsilon}\right\rangle
\]
its limit. Moreover, the limit is the same if $f_n$ is replaced by
$\widetilde{f}_n$ with the same properties and such that
$\lim\limits_{n\rightarrow\infty}\int_{\R^2}\int_{\R^2}(\widetilde{f}_n-f_n)^2\left(  x,y\right)  dxdy=0$.
}

\end{theorem}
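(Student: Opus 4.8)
The plan is to reduce the statement to the second--moment isometry for the Wick--renormalised square of white noise, following the proof of Theorem~8 of \cite{weakvorticityFlandoli} almost verbatim. The single analytic input is the bilinear identity: for symmetric $f,g\in C_c^{\infty}(\R^2\times\R^2)$,
\[
\E\big[\langle \bar{\omega}\otimes\bar{\omega},f\rangle\,\langle \bar{\omega}\otimes\bar{\omega},g\rangle\big]
=\int_{\R^2}f(x,x)\,dx\int_{\R^2}g(y,y)\,dy+2\int_{\R^2}\int_{\R^2}f(x,y)\,g(x,y)\,dx\,dy .
\]
To obtain it I would diagonalise the self--adjoint operator $T_f h:=\int_{\R^2}f(\cdot,y)h(y)\,dy$ on $L^2(\R^2)$; since $f\in C_c^{\infty}$, $T_f$ is trace class, so $f=\sum_k\lambda_k\,e_k\otimes e_k$ with $\{e_k\}$ orthonormal in $L^2(\R^2)$, each $e_k\in C_c^{\infty}(\R^2)$, $\sum_k|\lambda_k|<\infty$, and $\sum_k\lambda_k=\int_{\R^2} f(x,x)\,dx$. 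Because $\langle \bar\omega\otimes\bar\omega,e_k\otimes e_j\rangle=\langle\bar\omega,e_k\rangle\langle\bar\omega,e_j\rangle$ and the Gaussian variables $\langle\bar\omega,e_k\rangle$ are i.i.d.\ standard normal (jointly Gaussian with $\E[\langle\bar\omega,e_k\rangle\langle\bar\omega,e_j\rangle]=\delta_{kj}$, by the defining covariance of the white noise on $\R^2$), one gets $\langle\bar\omega\otimes\bar\omega,f\rangle=\sum_k\lambda_k\langle\bar\omega,e_k\rangle^2$ in $L^2(\P)$, and the identity then follows from Wick's (Isserlis') formula for fourth moments of Gaussians, the three pairings producing exactly the trace--trace term and the two copies of $\langle f,g\rangle_{L^2(\R^4)}$. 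Taking $g=f$ and subtracting the diagonal gives
\[
\E\Big[\big(\langle\bar\omega\otimes\bar\omega,f\rangle-\textstyle\int_{\R^2}f(x,x)\,dx\big)^2\Big]=2\,\|f\|_{L^2(\R^2\times\R^2)}^2 .
\]

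With this identity the theorem is a soft argument. Applying it to $f_n-f_m$ (again symmetric and in $C_c^{\infty}$) and using linearity of the pairing,
\[
\E\Big[\big(\langle\bar\omega\otimes\bar\omega,f_n\rangle-\langle\bar\omega\otimes\bar\omega,f_m\rangle-\textstyle\int_{\R^2}(f_n-f_m)(x,x)\,dx\big)^2\Big]=2\,\|f_n-f_m\|_{L^2(\R^4)}^2 .
\]
Since $f_n\to H_{\phi,\epsilon}$ in $L^2(\R^4)$ (such sequences exist because $H_{\phi,\epsilon}\in L^2(\R^4)$ by Lemma~\ref{HL2}), $\{f_n\}$ is Cauchy there, so the right--hand side tends to $0$; and $\int_{\R^2} f_n(x,x)\,dx\to0$ forces $\int_{\R^2}(f_n-f_m)(x,x)\,dx\to0$. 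Hence $\{\langle\bar\omega\otimes\bar\omega,f_n\rangle\}$ is Cauchy in $L^2(\P)$, i.e.\ in mean square, and we set $\langle\bar\omega\otimes\bar\omega,H_{\phi,\epsilon}\rangle$ equal to its limit. For the independence of the approximating sequence, if $\widetilde f_n$ satisfies the same hypotheses with $\|\widetilde f_n-f_n\|_{L^2(\R^4)}\to0$, the same identity applied to $\widetilde f_n-f_n$, together with $\int_{\R^2}\widetilde f_n(x,x)\,dx\to0$ and $\int_{\R^2} f_n(x,x)\,dx\to0$, yields $\langle\bar\omega\otimes\bar\omega,\widetilde f_n\rangle-\langle\bar\omega\otimes\bar\omega,f_n\rangle\to0$ in $L^2(\P)$, so the two limits agree.

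The only genuine work, and the expected obstacle, is the rigorous justification of the bilinear identity of the first paragraph -- concretely, that the duality pairing $\langle\bar\omega\otimes\bar\omega,f\rangle$ (defined via $H^{-2-}(\rho_\sigma\times\rho_\sigma)\times H^{2+}(\rho_\sigma^{-1}\times\rho_\sigma^{-1})$) coincides with $\sum_k\lambda_k\langle\bar\omega,e_k\rangle^2$ and may be manipulated by Wick's theorem. Since $\bar\omega$ on $\R^2$ was produced here only as a Skorokhod limit, the safest route is to run the finite Gaussian computation for the periodised fields $\bar\omega^M$ of \eqref{TM}, where the expansion in the $G_n^M$ turns the identity into an Isserlis computation over Fourier coefficients and the required moment bounds are uniform in $M$ (as in the proof of Lemma~\ref{whitenoiseconvergence}), and then pass to the limit along the Skorokhod subsequence; alternatively one simply uses that $\bar\mu$ is the law of an $L^2(\R^2)$--isonormal Gaussian process, which immediately gives the independence of the $\langle\bar\omega,e_k\rangle$ and hence the computation. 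All remaining steps are the elementary Cauchy--sequence manipulations above, identical to those in \cite{weakvorticityFlandoli}.
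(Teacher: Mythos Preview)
Your proposal is correct, and once the second--moment identity is in hand the Cauchy argument is literally the same as the paper's. The difference lies in how that identity is obtained.

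You establish the variance formula $\E\big[(\langle\bar\omega\otimes\bar\omega,f\rangle-\int f(x,x)\,dx)^2\big]=2\|f\|_{L^2(\R^4)}^2$ directly on $\R^2$, via the spectral decomposition of the trace--class operator $T_f$ and Wick's theorem for the isonormal process. The paper instead exploits the compact support of each $f_n$: since $f_n$ is supported in $[-n/4,n/4]^4$, it may be regarded as a smooth function on $\T_m^2\times\T_m^2$ for every $m\ge n$, and by the $\R^2$--torus correspondence for white noise (the argument around Definition~4 of \cite{10.1214/16-AOP1116}) one has the identity $\langle\bar\omega\otimes\bar\omega,f_n\rangle=\langle\omega^m\otimes\omega^m,f_n\rangle$. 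The variance formula then comes for free from the already--proved torus case (Corollary~\ref{corollary WN}, i.e.\ Corollary~6 of \cite{weakvorticityFlandoli}), with no spectral theory, no limit passage, and no need to reconcile the $H^{-2-}\times H^{2+}$ pairing with the eigenfunction expansion.

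Your route is more self--contained and yields the $\R^2$ moment formula as a by--product; the paper's route is shorter and fits its overall ``reduce to the torus'' philosophy. The one technical point you flag---matching the duality pairing $\langle\bar\omega\otimes\bar\omega,f\rangle$ with $\sum_k\lambda_k\langle\bar\omega,e_k\rangle^2$---is genuine: the eigenfunction series converges to $f$ in $L^2(\R^4)$, not a priori in $H^{2+}$, so some care is needed (your suggested detour through the periodised fields $\bar\omega^M$ would close this, and is essentially a roundabout version of what the paper does in one line via the Mourrat--Weber identity).
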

\begin{proof}
Without loss of generality, we assume that  for each $n$,  $f_n$ is supported 
in $[-\frac{n}{4},\frac{n}{4}]^4$.

In the following we will use the relation between white noise on the torus and the whole space, the details of which can be found in 
\cite{10.1214/16-AOP1116}.\\ Since $f_n$ is supported 
in $[-\frac{n}{4},\frac{n}{4}]^4$, it could also be viewed as a smooth function 
on $\T_M^2\times \T_M^2$ when $M\geq n$.

By  the explanation of  Definition 4 in  \cite{10.1214/16-AOP1116},
we know
\beq\label{whitenoisespacetorus}
\langle \bar{\omega} \otimes \bar{\omega} ,f_n
\rangle =\langle \omega^n \otimes \omega^n ,f_n
\rangle= \langle \omega^m \otimes \omega^m ,f_n
\rangle,
\eeq
for $m\geq n$, where $f_n$ is  understood as a function on
$\R^2$,   $\T_n^2$ and $\T_m^2$ respectively.
\\
Therefore,  now what we need to prove is the following:

$\langle \omega^n\otimes  \omega^n,
f_n
\rangle$ converges in $L^2(\Xi)$ as $n$ goes to $+\infty$ .\\
To prove the convergence it suffices to prove it is a Cauchy sequence.\\
Since $\lim\limits_{n\rightarrow\infty}\int_{\R^2} f_n\left(  x,x\right)  dx=0$, it
is equivalent to show that $\left\langle \omega^n\otimes\omega^n,f_n\right\rangle -\int f_n\left(  x,x\right)  dx$ is a Cauchy
sequence in mean square. We have for $m\geq n$,%
\begin{equation}\label{Cauchysequencewhitenoise}
\begin{split}
& \mathbb{E}\left[  \left\vert \left\langle \omega^n\otimes\omega^n,f_n \right\rangle -\int_{\R^2} f_n\left(  x,x\right)  dx-\left\langle
\omega^m\otimes\omega^m,f_m \right\rangle +\int_{\R^2} f_m\left(
x,x\right)  dx\right\vert ^{2}\right] \\
&=\mathbb{E}\left[  \left\vert \left\langle \omega^n\otimes\omega^n,f_n \right\rangle -\int_{\R^2}  f_n\left(  x,x\right)  dx-\left\langle
\omega^m\otimes\omega^m,f_m \right\rangle +\int_{\R^2}  f_m\left(
x,x\right)  dx\right\vert ^{2}\right] \\
& =\mathbb{E}\left[  \left\vert \left\langle \omega^m\otimes\omega^m ,\left(
f_n-f_m \right)  \right\rangle -\int_{\R^2} \left(  f_n-f_m\right)  \left(  x,x\right)  dx\right\vert ^{2}\right],
\end{split}
\end{equation}
where the second equality is due to \eqref{whitenoisespacetorus}.\\
By (ii) and (iii) of the  Corollary 6 in  \cite{weakvorticityFlandoli},
(for completeness we attach the corollary  later in Corollary \ref{corollary WN})
 we know that \eqref{Cauchysequencewhitenoise}
equals
\[
2\int_{\T_m^2}\int_{\T_m^2} \left(  f_n-f_m\right)  ^{2}\left(  x,y\right)  dxdy
=2\int_{\R^2}\int_{\R^2} \left(  f_n-f_m\right)  ^{2}\left(  x,y\right)  dxdy,
\]
which implies the Cauchy property of $\langle \bar{\omega} \otimes \bar{\omega} ,f_n
\rangle $ in mean square.
Hence $
\left\langle \bar{\omega} \otimes\bar{\omega} ,H_{\phi,\epsilon}\right\rangle
$ is well defined. 

Moreover,  by a similar way we prove that if we replace $f_n$ by
$\tilde{f}_n$ with the same properties
and such that
$\lim\limits_{n\rightarrow\infty}\int_{\R^2}\int_{\R^2}(\widetilde{f}_n-f_n)^2\left(  x,y\right)  dxdy=0$
,  $\langle \bar{\omega} \otimes \bar{\omega} ,\tilde{f}_n
\rangle $ also converges in mean square to  $
\left\langle \bar{\omega} \otimes\bar{\omega} ,H_{\phi,\epsilon}\right\rangle
$.
\end{proof}

\begin{corollary} See \cite[Corollary 6]{weakvorticityFlandoli} 
\label{corollary WN}

i) If $\omega^M:\Xi\rightarrow C^{\infty}\left(
\mathbb{T}_M^{2}\right)  ^{\prime}$ is a white noise and $f\in H^{2+}\left(
\mathbb{T}_M^{2}\times\mathbb{T}_M^{2}\right)  $, then for every $p\geq1$ there is
a constant $C_{p,M}>0$ such that
\[
\mathbb{E}\left[  \left\vert \left\langle \omega^M\otimes\omega^M,f\right\rangle
\right\vert ^{p}\right]  \leq C_{p,M}\left\Vert f\right\Vert _{{L^{\infty}}}^{p}.
\]

ii)\ We have $\mathbb{E}\left[  \left\langle \omega^M\otimes\omega^M
,f\right\rangle \right]  =\int_{\mathbb{T}_M^{2}}f\left(  x,x\right)  dx$.

iii)\ If $f$ is symmetric, then%
\[
\mathbb{E}\left[  \left\vert \left\langle \omega^M\otimes\omega^M,f\right\rangle
-\mathbb{E}\left[  \left\langle \omega^M\otimes\omega^M,f\right\rangle \right]
\right\vert ^{2}\right]  =2\int_{\mathbb{T}_M^{2}}\int_{\mathbb{T}_M^{2}}f\left(
x,y\right)  ^{2}dxdy.
\]

\end{corollary}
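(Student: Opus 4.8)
The plan is to reduce parts i)--iii) to elementary computations in the second Wiener chaos generated by $\omega^M$. Fix a real orthonormal basis $\{\phi_k\}_{k\ge 1}$ of $L^2(\T_M^2)$ and write $\omega^M=\sum_k g_k\phi_k$ with $(g_k)_{k\ge1}$ i.i.d.\ standard real Gaussians (the real form of the expansion \eqref{TM}, for which $\langle\omega^M,\phi_k\rangle=g_k$). Setting $f_{kl}:=\langle f,\phi_k\otimes\phi_l\rangle_{L^2(\T_M^2\times\T_M^2)}$, the object $\langle\omega^M\otimes\omega^M,f\rangle$ is, by its definition in \cite{weakvorticityFlandoli}, the $L^2(\Xi)$-limit
\[
\langle\omega^M\otimes\omega^M,f\rangle=\int_{\T_M^2}f(x,x)\,dx+\sum_k (g_k^2-1)f_{kk}+\sum_{k\ne l}g_kg_l\,f_{kl}=:c+Q .
\]
Here the diagonal constant $c=\int_{\T_M^2}f(x,x)\,dx$ is well defined because $H^{2+}(\T_M^2\times\T_M^2)\hookrightarrow C(\T_M^2\times\T_M^2)$ in dimension four (Sobolev index $>4/2$, which is exactly why the hypothesis is $H^{2+}$ and not $H^{2}$), so $f$ has a genuine restriction to the diagonal; and the two series converge in $L^2(\Xi)$ because $\sum_{k,l}f_{kl}^2=\|f\|_{L^2(\T_M^2\times\T_M^2)}^2<\infty$ by Parseval. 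Thus $Q$ is a centred element of the second homogeneous chaos $\mathcal H_2$.

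With this decomposition, ii) is immediate since $\mathbb{E}[Q]=0$ gives $\mathbb{E}[\langle\omega^M\otimes\omega^M,f\rangle]=c=\int_{\T_M^2}f(x,x)\,dx$. For iii), when $f$ is symmetric one has $f_{kl}=f_{lk}$, and a direct computation using $\mathbb{E}[(g_k^2-1)^2]=2$, $\mathbb{E}[g_k^2g_l^2]=1$ for $k\ne l$, and the vanishing of all cross moments, yields
\[
\mathbb{E}\bigl[Q^2\bigr]=2\sum_k f_{kk}^2+2\sum_{k\ne l}f_{kl}^2=2\sum_{k,l}f_{kl}^2=2\|f\|_{L^2(\T_M^2\times\T_M^2)}^2=2\int_{\T_M^2}\!\int_{\T_M^2}f(x,y)^2\,dx\,dy,
\]
which is precisely iii) because $Q=\langle\omega^M\otimes\omega^M,f\rangle-\mathbb{E}[\langle\omega^M\otimes\omega^M,f\rangle]$. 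For i), I would invoke the equivalence of $L^p$-norms on a fixed Wiener chaos (Nelson's hypercontractivity): since $Q\in\mathcal H_2$, $\|Q\|_{L^p(\Xi)}\le (p-1)\|Q\|_{L^2(\Xi)}$ for $p\ge2$, hence $\|\langle\omega^M\otimes\omega^M,f\rangle\|_{L^p(\Xi)}\le |c|+\sqrt2\,(p-1)\|f\|_{L^2(\T_M^2\times\T_M^2)}$. Finally I bound both geometric quantities crudely by the sup-norm, $|c|\le |\T_M^2|\,\|f\|_{L^\infty}=M^2\|f\|_{L^\infty}$ and $\|f\|_{L^2(\T_M^2\times\T_M^2)}\le |\T_M^2\times\T_M^2|^{1/2}\|f\|_{L^\infty}=M^2\|f\|_{L^\infty}$, which gives i) with $C_{p,M}=\bigl(M^2(1+\sqrt2(p-1))\bigr)^p$ when $p\ge2$; the case $1\le p<2$ then follows from Jensen's inequality, $\mathbb{E}[|X|^p]\le\mathbb{E}[|X|^2]^{p/2}$.

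The only genuinely delicate point is the first step: the rigorous construction of $\langle\omega^M\otimes\omega^M,f\rangle$ and the justification of the chaos decomposition above. This is exactly the content of the cited construction in \cite{weakvorticityFlandoli}; the subtlety is that the centred part $Q$ needs only $f\in L^2$, whereas assigning a value to the diagonal term $\int f(x,x)\,dx$ on the four-dimensional product torus requires the extra Sobolev regularity $f\in H^{2+}$. Once the decomposition is available, i)--iii) are the routine Gaussian moment computations above, the single non-elementary input being hypercontractivity for part i).
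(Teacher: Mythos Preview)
The paper does not give its own proof of this corollary; it merely quotes the statement from \cite[Corollary~6]{weakvorticityFlandoli} and uses it as a black box. So there is nothing to compare against, and your write-up is in fact more than the paper provides.

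Your argument is the standard one and is correct. One small point: in part~i) you invoke the variance identity from iii), but iii) is stated only for symmetric $f$. This is harmless, since $\langle\omega^M\otimes\omega^M,f\rangle=\langle\omega^M\otimes\omega^M,f^{\mathrm{sym}}\rangle$ with $f^{\mathrm{sym}}(x,y)=\tfrac12\bigl(f(x,y)+f(y,x)\bigr)$ and $\|f^{\mathrm{sym}}\|_{L^2}\le\|f\|_{L^2}\le M^2\|f\|_{L^\infty}$, so the bound goes through unchanged; it would be worth saying this explicitly. The identification $\sum_k f_{kk}=\int_{\T_M^2}f(x,x)\,dx$ that you use implicitly is exactly the trace formula for the integral operator with continuous kernel $f$, valid here because of the Sobolev embedding $H^{2+}(\T_M^2\times\T_M^2)\hookrightarrow C(\T_M^2\times\T_M^2)$ you already pointed out.
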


We now give an example of  the approximating sequence $\{f_n\}_{n\geq 1}$.
\

\

\noindent
\textbf{Constructuion of the approximating sequence $\{f_n\}_{n\geq 1}$}

We have proved in Lemma \ref{HL2} that for fixed $\phi\in C_c^{2}(\R^2) $,
$H_{\phi,\epsilon} \in L^2(\R^2\times \R^2)$,
thus there exists a sequence of  function $g_n\in C_c^{\infty}(\R^2\times \R^2)$ which converge to 
$H_{\phi,\epsilon}$ in $L^2(\R^2\times \R^2)$. 
We can also assume that $g_n$ is symmetric (otherwise,  
we let $\tilde{g}_n=\frac12\bigl(g_n(x,y)+g_n(y,x)\bigr)$.)\\
Without loss of generality we assume that  for each $n$,  $g_n$ is supported 
in $[-\frac{n}{4},\frac{n}{4}]^4$.
Let $f_n=r_ng_n$, where $r_n(x,y)$ is defined as follows:
$$ r_n \begin{cases}
  =1, &| x- y|\geq \frac{1}{n^6};\\
  =0, & | x- y|\leq \frac{1}{2n^6};\\
  \in [0,1] \text{  such that $r_n$ smooth},  & \frac{1}{2n^6}\leq | x- y|\leq \frac{1}{n^6}.
  \end{cases}$$
Thus $f_n$ is also smooth and supported in $[-\frac{n}{4},\frac{n}{4}]^4$.
Moreover,
\begin{equation*}
\begin{split}
\|f_n-H_{\phi,\epsilon}\|_{L^2}&\leq \|g_n-H_{\phi,\epsilon}\|_{L^2}
+\|(f_n-g_n)1_{| x- y|\leq \frac{1}{n^6}}\|_{L^2}\\
&\leq  \|g_n-H_{\phi,\epsilon}\|_{L^2}+\|g_n1_{| x- y|\leq \frac{1}{n^6}}\|_{L^2}\\
&\leq 2\|g_n-H_{\phi,\epsilon}\|_{L^2}+\|H_{\phi,\epsilon}1_{(x,y)\in [-\frac{n}{4},\frac{n}{4}]^4,| x- y|\leq \frac{1}{n^6}}\|_{L^2}\\
&\rightarrow 0,
\end{split}
\end{equation*}
where the last line is due to the fact that $g_n$ converges to $H_{\phi,\epsilon} $ in $L^2$ and the Lebesgue measure of the set
$\{(x,y)\in [-\frac{n}{4},\frac{n}{4}]^4;   | x- y|\leq \frac{1}{n^6}       \}
$ goes to $0$.

\begin{remark}\label{fnapprox}
\

{\sl
\begin{enumerate}
\item
Obviously, all the $f_n$ and $g_n$ that we defined above rely on 
$\phi$ and $\epsilon$, but for simplicity of the notation, we skip them
in our notation.
\item Note that the rate of convergence of $\left\langle \bar{\omega} \otimes \bar{\omega} ,f_n
\right\rangle $ to $\left\langle \bar{\omega} \otimes\bar{\omega} ,H_{\phi,\epsilon}\right\rangle$
in $L^2 (\Xi)  $
 only depends on the rate of the convergence of
$f_n$ to 
$H_{\phi,\epsilon}$
in $L^2(\R^2\times \R^2)$,
 but does not depend on $\bar{\omega}$ as long as it is 
a space white noise.
\item From our construction, we can require that 
$f_n(x,x)=0$ for any $n$.

\end{enumerate}}

\end{remark}

After we define the nonlinear term,  we manage to define the white noise (weak) solution of \eqref{2DmSQG} on $\R^2$.

\subsection{Main theorem of the paper}
Now we introduce the main result of the paper.

\begin{theorem}\label{maintheorem}
	{\sl There exists a white noise stationary (weak) solution of
	\eqref{2DmSQG} according to  Definition \ref{whitenoisesolution}.}
	\end{theorem}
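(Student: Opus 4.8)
The plan is to transfer the existence result from the tori $\T_M^2$ to $\R^2$ via a compactness/tightness argument, exactly parallel to the construction of the white noise measure on $\R^2$ in Lemma \ref{whitenoiseconvergence}, but now carried out at the level of the whole trajectory $\omega_\cdot$. First I would invoke the known result of \cite{FS}: for every $M\geq 1$ there exists a stationary white noise solution $\omega^M_\cdot:\Xi\times[0,T]\to C^\infty(\T_M^2)'$ of the weak mSQG equation \eqref{weakmsqgtm2} on $\T_M^2$; in particular for each fixed $t$, $\omega^M_t$ is a space white noise on $\T_M^2$ (admitting the Fourier representation \eqref{TM}), and \eqref{weakmsqgtm2} holds $\P$-a.e. for every test function $\phi\in C^\infty(\T_M^2)$. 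Extending periodically as in the construction before Lemma \ref{whitenoiseconvergence}, I get trajectories $\bar\omega^M_\cdot$ on $\R^2$. Fix $\sigma>2$ and $\nu>0$; by (the trajectory-wise version of) Lemma \ref{whitenoiseconvergence}, for each fixed $t$ the law of $\bar\omega^M_t$ on $H^{-1-\nu}(\rho_\sigma)$ is tight, with second moment bounded uniformly in $M$ and $t$ (this uses stationarity).

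The next step is to upgrade this pointwise-in-time tightness to tightness of the laws of the processes $\bar\omega^M_\cdot$ in the path space $C([0,T];(C_c^2)')$, or more conveniently in $C([0,T];H^{-1-\nu'}(\rho_{\sigma'}))$ for $\sigma'>\sigma$, $\nu'>\nu$ chosen so that $H^{-1-\nu}(\rho_\sigma)\hookrightarrow H^{-1-\nu'}(\rho_{\sigma'})$ compactly (Lemma immediately after the weighted-space definition). For this I would use an Aubin--Lions / Ascoli-type tightness criterion: uniform bounds on $\sup_t\|\bar\omega^M_t\|_{H^{-1-\nu}(\rho_\sigma)}$ in $L^2(\P)$ give the compact-range condition, and an equicontinuity estimate on $t\mapsto\langle\bar\omega^M_t,\phi\rangle$ comes from the weak equation: for $\phi\in C_c^\infty(\R^2)$ supported in $[-M_0/2,M_0/2]^2$ and $M>M_0$, \eqref{weakmsqgtm2} reads $\langle\bar\omega^M_t,\phi\rangle-\langle\bar\omega^M_s,\phi\rangle=\int_s^t\langle\omega^M_r\otimes\omega^M_r,H^M_{\phi,\epsilon}\rangle\,dr$, and by Corollary \ref{TMHphibound} together with Corollary \ref{corollary WN}(i) (with $p=2$, or a higher moment for a Kolmogorov-type continuity estimate) the increment has second moment $\lesssim |t-s|^2 C_\phi^2$, uniformly in $M$. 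This yields tightness of $\{\bar\omega^M_\cdot\}$ on the path space. By Prokhorov and Skorokhod I then pass to a subsequence $M_k\to\infty$ and obtain, on a new probability space $(\Xi',\F',\P')$, processes $\bar\omega'^{M_k}_\cdot$ with the same laws converging $\P'$-a.s. in $C([0,T];H^{-1-\nu'}(\rho_{\sigma'}))$ to a limit $\bar\omega'_\cdot$.

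It then remains to verify that the limit $\bar\omega'_\cdot$ satisfies the two requirements of Definition \ref{whitenoisesolution}. For property (1): fix $t$; the claim in the proof of Lemma \ref{whitenoiseconvergence} already shows that the a.s. limit of the periodized white noises is the space white noise on $\R^2$ (Gaussianity of $\langle\bar\omega'_t,\phi\rangle$ and the covariance identity $\E\langle\bar\omega'_t,\phi\rangle\langle\bar\omega'_t,\psi\rangle=\langle\phi,\psi\rangle_{L^2(\R^2)}$ pass to the limit once $M_k$ is large enough that $\mathrm{supp}\,\phi,\mathrm{supp}\,\psi$ fit inside $\T_{M_k}^2$). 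For property (2): fix $\phi\in C_c^\infty(\R^2)$ with $\mathrm{supp}\,\phi\subset[-M_0/2,M_0/2]^2$. For $M_k>M_0$ the prelimit equation $\langle\bar\omega'^{M_k}_t,\phi\rangle=\langle\bar\omega'^{M_k}_0,\phi\rangle+\int_0^t\langle\omega'^{M_k}_r\otimes\omega'^{M_k}_r,H^{M_k}_{\phi,\epsilon}\rangle\,dr$ holds. The linear terms converge by the a.s. convergence in the path space. For the nonlinear term I would approximate: pick the symmetric compactly supported $f_n$ built in the "Construction of the approximating sequence" paragraph (with $f_n(x,x)\equiv 0$, Remark \ref{fnapprox}(3)), use \eqref{whitenoisespacetorus} to replace $\langle\omega'^{M_k}_r\otimes\omega'^{M_k}_r,f_n\rangle$ by $\langle\bar\omega'^{M_k}_r\otimes\bar\omega'^{M_k}_r,f_n\rangle$ once $M_k\geq n$, control $\|H^{M_k}_{\phi,\epsilon}-f_n\|_{L^2}$ uniformly in $k$ using Lemma \ref{uniformboundofkernel} and the pointwise convergence $K^M_\epsilon\to K_\epsilon$, so that $\langle\omega'^{M_k}_r\otimes\omega'^{M_k}_r,H^{M_k}_{\phi,\epsilon}\rangle$ is close in $L^2(\P')$ to $\langle\bar\omega'^{M_k}_r\otimes\bar\omega'^{M_k}_r,f_n\rangle$ with an error uniform in $k$ (Corollary \ref{corollary WN}(iii), since $\bar\omega'^{M_k}_r$ is a space white noise), and finally let $k\to\infty$ and then $n\to\infty$, using Theorem \ref{constrcutomegaomegaHphi} to identify the limit with $\langle\bar\omega'_r\otimes\bar\omega'_r,H_{\phi,\epsilon}\rangle$. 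A Fubini/dominated-convergence argument passes the limit through $\int_0^t\cdots dr$. Stationarity of $\bar\omega'_\cdot$ is inherited from the stationarity of the $\omega^M_\cdot$ under the weak convergence. I expect the main obstacle to be precisely the identification of the nonlinear term in the limit: one must control three sources of error simultaneously — the kernel discrepancy $K^{M_k}_\epsilon$ vs.\ $K_\epsilon$ near the (integrable but singular) diagonal, the $L^2$-approximation $f_n$ vs.\ $H_{\phi,\epsilon}$, and the weak convergence $\bar\omega'^{M_k}_\cdot\to\bar\omega'_\cdot$ — and arrange the order of limits ($k\to\infty$ before $n\to\infty$) so that all bounds are uniform in the surviving index, which is where the uniform-in-$M$ kernel bound of Lemma \ref{uniformboundofkernel} and the moment bounds of Corollary \ref{corollary WN} are essential.
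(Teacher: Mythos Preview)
Your plan is essentially the paper's own argument: periodic extension of the torus solutions from \cite{FS}, tightness of the trajectory laws via the uniform-in-$M$ bounds on $\langle\bar\omega^M_t,\phi\rangle$ and $\partial_t\langle\bar\omega^M_t,\phi\rangle$ (the latter from Corollary~\ref{TMHphibound} and Corollary~\ref{corollary WN}), Skorokhod, identification of the marginal as white noise exactly as in Lemma~\ref{whitenoiseconvergence}, and then the three-error decomposition for the nonlinear term ($f_{n_0}$ vs.\ $H_{\phi,\epsilon}$, $k\to\infty$ on the smooth $f_{n_0}$, $H^{M_k}_{\phi,\epsilon}$ vs.\ $f_{n_0}$ via dominated convergence of the kernels). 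The one technical divergence is the path space: the paper does \emph{not} work in the Polish space $C([0,T];H^{-1-\nu'}(\rho_{\sigma'}))$ but in $W^{1,2}([0,T];(C_c^2)')$ with the weak topology generated by $L^2([0,T];C_c^2)$, and invokes Jakubowski's (non-metric) Skorokhod theorem after checking this space is standard Borel. The reason is precisely the point you flag implicitly: your equicontinuity estimate is only on pairings $\langle\bar\omega^M_t-\bar\omega^M_s,\phi\rangle$ with a $\phi$-dependent constant $C_\phi\sim C(R_\phi)$ growing with the support radius, so upgrading to norm-equicontinuity in $H^{-1-\nu'}(\rho_{\sigma'})$ for a direct Ascoli/Aubin--Lions argument would require an extra summability step you have not written down. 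Either route should work, but the paper's choice of a weak-$\ast$ path space plus Jakubowski sidesteps that issue entirely.
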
			
	In other words, we prove a similar result of  \cite{FS} by letting the volume of the torus go to infinity,  which is in the next section.

\section{Proof of the  Theorem \ref{maintheorem}}\label{sec4}
In this section we prove the main result (Theorem \ref{maintheorem}) of this paper.
First we recall the similar result on the torus.
Recalling  the  Theorem 1 of  \cite{FS} (also Theorem 1.1 of   \cite{LUO2021236} by letting $\theta=0$), the following theorem  was 
proved 
\begin{theorem}[Existence] \label{torusthm-existence}
{\sl Let $\epsilon\in (0,1)$. There exist a  probability space $(\Omega, \mathcal F,\P)$ and a stationary process $\xi:\Omega\to C\big( [0,T]; H^{-1-}\big)$ such that, for all $t\in [0,T]$, $\xi_t$ is a white noise on $\T^2$; and for all $\phi\in C^\infty(\T^2)$, $\P$-a.s. for all  $t\in [0,T]$, one has
  \beq\label{T2weakformulation}
  \begin{split}
  \big\<\xi_t,\phi \big\> =&\ \big\<\xi_0,\phi \big\> + \int_0^t \big\<\xi_s\otimes \xi_s, H_{\phi,\epsilon} \big\>\,d s .
  \end{split}
 \eeq }
\end{theorem}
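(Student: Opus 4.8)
The plan is to realize the stationary white-noise solution as a scaling limit of the point-vortex system recalled in the Introduction, following the weak-vorticity strategy of \cite{weakvorticityFlandoli, FS}. Fix $N\in\N$, let $\xi_1,\dots,\xi_N$ be i.i.d. symmetric intensities, suitably normalized so that $\mathbb{E}\xi_j=0$ and $\mathbb{E}\xi_j^2$ matches the white-noise covariance, and let $X_0^{1,N},\dots,X_0^{N,N}$ be i.i.d. uniform on $\T^2$, independent of the intensities. Let $(X_t^{i,N})_i$ solve the point-vortex ODE with kernel $K_\epsilon$ and set $\omega_t^N=\frac{1}{\sqrt N}\sum_{n=1}^N\xi_n\delta_{X_t^{n,N}}$. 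Because $K_\epsilon(0)=0$ and $K_\epsilon$ is antisymmetric, differentiating $\langle\omega_t^N,\phi\rangle$ along the flow and symmetrizing gives the exact identity
\[
\langle\omega_t^N\otimes\omega_t^N,H_{\phi,\epsilon}\rangle=\frac{1}{2N}\sum_{i,j=1}^N\xi_i\xi_j\,K_\epsilon(X_t^i-X_t^j)\cdot\big(\nabla\phi(X_t^i)-\nabla\phi(X_t^j)\big),
\]
in which the diagonal $i=j$ drops out automatically, so $\omega^N$ solves the weak formulation \eqref{T2weakformulation} without any renormalization. This is precisely why the weak-vorticity formulation is the correct object to pass to the limit.

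Two structural facts drive the construction. First, global-in-time solvability of the ODE for Lebesgue-a.e. initial configuration: writing $K_\epsilon=\nabla^\perp G_\epsilon$ for the stream kernel $G_\epsilon$, the system is Hamiltonian, so by Liouville's theorem its flow preserves Lebesgue measure on $(\T^2)^N$, and one checks that the set of initial data leading to a collision is Lebesgue-null. Here the sub-Biot--Savart singularity $|K_\epsilon(x)|\lesssim|x|^{-(2-\epsilon)}$ with $0<\epsilon<1$ (Lemma \ref{uniformboundofkernel}) makes the non-collision estimate milder than in the Euler case. Invariance of Lebesgue measure, with the intensities frozen, shows that the law of $(X_t^{i,N})_i$ does not depend on $t$; hence $(\omega_t^N)_{t\in[0,T]}$ is stationary and $\omega_t^N\overset{d}{=}\omega_0^N$ for every $t$. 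Second, a characteristic-functional computation --- a central limit theorem for the array $\frac{1}{\sqrt N}\sum_n\xi_n\phi(X_0^n)$ --- yields $\mathbb{E}\exp(i\langle\omega_0^N,\phi\rangle)\to\exp(-\tfrac12\|\phi\|_{L^2(\T^2)}^2)$, so $\omega_0^N$ converges in law to white noise on $\T^2$.

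Next I would prove tightness of the laws of $\omega^N$ in $C([0,T];H^{-1-})$. By stationarity and the white-noise bound, $\sup_t\mathbb{E}\|\omega_t^N\|_{H^{-1-\nu}(\T^2)}^2$ is bounded uniformly in $N$, which, via the compact embedding $H^{-1-\nu}(\T^2)\hookrightarrow H^{-1-2\nu}(\T^2)$, controls the spatial marginals. For time regularity I would use the identity above together with Corollary \ref{corollary WN}(i): since $H_{\phi,\epsilon}\in L^2(\T^2\times\T^2)$ by Lemma \ref{HL2} and Corollary \ref{TMHphibound}, the increment $\langle\omega_t^N-\omega_s^N,\phi\rangle=\int_s^t\langle\omega_r^N\otimes\omega_r^N,H_{\phi,\epsilon}\rangle\,dr$ has $2p$-moments bounded by $C|t-s|^{2p}$ uniformly in $N$, giving Hölder continuity in $t$ and tightness through a Kolmogorov-type criterion. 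Prokhorov's and Skorokhod's theorems then furnish a subsequence $N_k$, a process $\xi$, and a common probability space on which $\omega^{N_k}\to\xi$ almost surely in $C([0,T];H^{-1-})$; the marginal law and stationarity pass to the limit, so each $\xi_t$ is white noise and $\xi$ is stationary.

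The hard part is identifying the nonlinear term in the limit, because $\mu\mapsto\langle\mu\otimes\mu,H_{\phi,\epsilon}\rangle$ is not continuous on $H^{-1-}$. I would resolve this by the double approximation of Theorem \ref{constrcutomegaomegaHphi}, transplanted to $\T^2$: pick symmetric $f_n\in C_c^\infty$ with $f_n\to H_{\phi,\epsilon}$ in $L^2(\T^2\times\T^2)$ and $\int f_n(x,x)\,dx\to 0$. For each fixed smooth $f_n$ the quadratic map is continuous enough to pass through the almost-sure convergence $\omega^{N_k}\to\xi$, so $\langle\omega_r^{N_k}\otimes\omega_r^{N_k},f_n\rangle\to\langle\xi_r\otimes\xi_r,f_n\rangle$; meanwhile the error $\langle\omega^N\otimes\omega^N,H_{\phi,\epsilon}-f_n\rangle$ is bounded in mean square, uniformly in $N$, by $2\|H_{\phi,\epsilon}-f_n\|_{L^2}^2$ plus the vanishing diagonal term (Corollary \ref{corollary WN}(iii)), and the same quantity defines $\langle\xi_r\otimes\xi_r,H_{\phi,\epsilon}\rangle$ as a mean-square limit. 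Interchanging the limits in $n$ and $N_k$ under these uniform bounds, and using dominated convergence in the time integral, upgrades the exact identity for $\omega^N$ to \eqref{T2weakformulation} for $\xi$, holding $\P$-a.s. simultaneously in $t$ by continuity of trajectories. I expect this uniform-in-$N$ control of the renormalized quadratic functional near the diagonal to be the main obstacle, as the singularity of $K_\epsilon$ rules out any direct passage to the limit.
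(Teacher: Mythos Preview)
The paper does not prove this theorem. It is stated at the beginning of Section~\ref{sec4} as a recalled result, with the attribution ``Recalling the Theorem~1 of \cite{FS} (also Theorem~1.1 of \cite{LUO2021236} by letting $\theta=0$), the following theorem was proved.'' The paper then uses Theorem~\ref{torusthm-existence} as a black box: it takes the white-noise solution $\omega^M$ on $\T_M^2$, extends it periodically to $\R^2$, and sends $M\to\infty$ via compactness and Skorokhod to obtain the main Theorem~\ref{maintheorem}.

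Your proposal is not a comparison target here, but it is worth noting that what you have sketched is precisely the point-vortex strategy of \cite{FS} and \cite{weakvorticityFlandoli} that underlies the cited result. The ingredients you list --- Hamiltonian structure and Liouville's theorem for a.e.\ global solvability and stationarity, the CLT for $\omega_0^N$, tightness via moment bounds on $\langle\omega^N\otimes\omega^N,H_{\phi,\epsilon}\rangle$, and the double-approximation argument to identify the nonlinear limit --- are the correct ones, and the use of Corollary~\ref{corollary WN} and the $L^2$ bound on $H_{\phi,\epsilon}$ to control the error uniformly in $N$ is exactly how \cite{FS} closes the argument. So your sketch is a faithful outline of the proof in the cited references, but the present paper simply imports the conclusion rather than reproducing any of it.
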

Note that for $\epsilon=1$ it is the Euler equation,  the result is also proved in 
\cite{weakvorticityFlandoli}.
\noindent
\begin{remark}
{\sl
\begin{enumerate}
\
\item In both  \cite{weakvorticityFlandoli} and \cite{FS},  it is obvious that the zero set depends on $\phi$.  Indeed since the non-linear term is defined in the mean square sense, for any $\phi$ we can change the value of 
$\big\<\xi_s\otimes \xi_s, H_{\phi,\epsilon} \big\>$ in any  zero set $N_{\phi}$.
\item In \cite{weakvorticityFlandoli} and \cite{FS},  the proof of both cases
on the torus only use the boundedness of second derivatives of test function
$\phi$.  In other words,  the above theorem also holds when
$\phi\in C^2(\T^2)$.

\item In the proof of \cite{weakvorticityFlandoli} and \cite{FS}, the proof does not depend on the radius of the torus. Therefore, the same results also work for $\T_M^2$ for any $M>0$.
\end{enumerate}
}
\end{remark}

\noindent
Let $\omega_t^M$ be the solution in the above theorem on the torus $\T_M^2$ on some  probability space $(\Omega^M, \mathcal F^M,\P^M)$. 
First we fix
$\phi\in C_c^{2}$. Assume that $\phi$ is supported in  $[-\frac{A}{4}, \frac{A}{4}]^2$.
Thus for $M>A$,  $\phi$ could also be viewed as a function on the torus $\T_M^2$.
\noindent
By Theorem \ref{torusthm-existence} we have for $M>A$,  
$$\aligned
  \big\<\omega_t^M,\phi \big\> =&\ \big\<\omega_0^M,\phi \big\> + \int_0^t \big\<\omega_s^M\otimes \omega_s^M, H_{\phi,\epsilon}^M \big\>\,d s ,
  \endaligned $$
where
$H_{\phi,\epsilon}^M$ is defined in Section  \ref{section222} and Section \ref{section23}, and the nonlinear term is defined as in
\cite{weakvorticityFlandoli}.

Similar to before,  let $\bar{\omega}_t^M$ be the periodic 
extension of $\omega_t^M$  on $\R^2$.   Thus we have for any $t\geq 0$, 
$$\big\<\bar{\omega}_t^M,\phi \big\> =\big\<\omega_t^M,\phi_M \big\> ,
$$
where the left hand is defined on $\R^2$  and the right hand side is defined on the torus $\T_M^2$.
Thus 
\beq\label{Mequation}
  \big\<\bar{\omega}_t^M,\phi \big\> =\ \big\<\bar{\omega}_0^M,\phi \big\> + \int_0^t \big\<\omega_s^M\otimes \omega_s^M, H_{\phi_M,\epsilon}^M \big\>\,d s ,  
 \eeq
  where $\big\<\bar{\omega}_t^M,\phi \big\>$
  and $\big\<\bar{\omega}_0^M,\phi \big\>$ are duality products on $\R^2$
  but $\big\<\omega_s^M\otimes \omega_s^M, H_{\phi_M,\epsilon}^M \big\>$ is the duality product on 
  $\T_M^2$.
  
  \noindent
 Before we prove the tightness we need some more preparations.
 We begin with a lemma.

 \begin{lemma}
 {\sl The metric space $C_c^2$ with the $C^2$ H\"{o}lder
norm is separable. }
 \end{lemma}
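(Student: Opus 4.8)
The plan is to exhibit an explicit countable dense subset of $C_c^2(\R^2)$. First I would reduce the problem to a fixed compact exhaustion: write $\R^2 = \bigcup_{m\geq 1} B_m$, where $B_m$ is the closed ball of radius $m$ centered at the origin, and observe that every $\phi \in C_c^2$ is supported in some $B_m$. It therefore suffices to produce, for each $m$, a countable subset of $\{\phi \in C_c^2 : \operatorname{supp}\phi \subseteq B_{m+1}\}$ that is dense (in the $C^2$ norm) among those functions supported in $B_m$; taking the union over $m$ yields a countable dense subset of all of $C_c^2$. (Enlarging the ball slightly at the target level is needed because the approximants should have support in a fixed compact set even if their $C^2$-close limits are only supported in the smaller ball.)

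Next I would construct the approximants. Fix a cutoff $\chi \in C_c^\infty(\R^2)$ with $\chi \equiv 1$ on $B_m$ and $\operatorname{supp}\chi \subseteq B_{m+1}$. Given $\phi \in C_c^2$ supported in $B_m$, I would approximate $\phi$ in $C^2$ by polynomials $p$ with rational coefficients on the compact set $B_{m+1}$ — this is possible by the Stone--Weierstrass theorem applied to $\phi$ together with its first and second partial derivatives (equivalently, by mollifying $\phi$ and then Taylor-truncating, adjusting the remainder). Multiplying by $\chi$, the family $\{\chi \cdot p : p \text{ polynomial with rational coefficients}\}$ is countable, contained in $C_c^2$ with support in $B_{m+1}$, and one checks that $\|\chi p - \phi\|_{C^2} = \|\chi(p-\phi)\|_{C^2} \lesssim \|\chi\|_{C^2}\,\|p - \phi\|_{C^2(B_{m+1})}$, which can be made arbitrarily small. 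Here I use that $\phi = \chi\phi$ since $\operatorname{supp}\phi \subseteq B_m$ and $\chi \equiv 1$ there.

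The main (really the only) technical point is the uniform $C^2$ approximation of a given $\phi \in C_c^2$ by polynomials on a fixed ball. This is standard: convolve $\phi$ with a smooth mollifier to get $\phi_\delta \in C_c^\infty$ with $\phi_\delta \to \phi$ in $C^2$, then on $B_{m+1}$ approximate $\phi_\delta$ and its derivatives simultaneously by a single polynomial using the fact that one can integrate/differentiate a Weierstrass polynomial approximation of $D^2\phi_\delta$; rationality of coefficients is then a further harmless perturbation. Assembling the pieces: the countable set $\bigcup_{m\geq 1} \{\chi_m p : p \in \Q[x_1,x_2]\}$ is dense in $C_c^2(\R^2)$, so $C_c^2$ is separable.
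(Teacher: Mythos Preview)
Your argument is correct and takes a genuinely different route from the paper. The paper proceeds in two short steps: first it shows that any $\phi\in C_c^2$ can be approximated in the $C^2$ norm by functions in $\mathcal{S}(\R^2)$ via mollification ($\gamma_R\ast\phi\to\phi$ in $C^2$, and these mollifications lie in $C_c^\infty\subset\mathcal{S}$); second, it invokes the known separability of $\mathcal{S}(\R^2)$, takes a countable dense subset $\{f_i\}$, and then approximates each $f_i$ by $f_{ij}\in C_c^\infty$ (using density of $C_c^\infty$ in $\mathcal{S}$), so that $\{f_{ij}\}_{i,j\ge1}$ is the desired countable dense set. By contrast, you work entirely by hand: exhaust $\R^2$ by balls, fix cutoffs $\chi_m$, and use $C^2$-polynomial approximation on compacta together with the Leibniz estimate $\|\chi_m(p-\phi)\|_{C^2}\lesssim\|\chi_m\|_{C^2}\|p-\phi\|_{C^2(B_{m+1})}$ to produce the explicit countable family $\bigcup_m\{\chi_m p:p\in\Q[x_1,x_2]\}$. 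Your approach is more self-contained and constructive, while the paper's is shorter because it outsources the heavy lifting to the (classical) separability of the Schwartz space. The only place where your sketch is a little loose is the assertion that polynomials are $C^2$-dense on a compact ball; the integration-of-Weierstrass-approximants idea you mention is delicate in two variables because of compatibility conditions on mixed partials, but the conclusion is standard and your alternative route (mollify to $C^\infty$, then convolve with a Gaussian to obtain real-analytic approximants whose Taylor polynomials converge in $C^2$ on compacts) closes this cleanly.
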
  
  \begin{proof}
  \
  
  \noindent
   \textbf{Step 1: to prove that $C_c^2$ can be approximated by $\mathcal{S}(\R^2)$}\\
  We fix a  family of smooth  functions which converge to the Dirac function,  for example,
  $$ \gamma_R(x)= \begin{cases}
  C_R\exp\{\frac{1}{R|x|^2-1} \}, &| x|^2\leq \frac{1}{R};\\
  0, & | x|^2\geq \frac{1}{R},
  \end{cases}$$
  where $C_R$ is a constant such that 
  
  $$\int_{\R^2} \gamma_R(x) \ dx=1.
  $$
   For any function  $f\in C_c^2$ and any index $|\alpha|\leq 2$, 
   $$D^\alpha(\gamma_R\ast f-f)=C_R\int_{\R^2}D^\alpha(f(x-y)-f(x))\gamma_R(y)\ dy.
   $$
   Since $f\in C_c^2$, $D^\alpha(f(x-y)-f(x))$ goes to $0$ uniformly as $y$ tends to $0$,
  hence $ \gamma_R\ast f$ converges to $f$ in $C^2$.
   It is obvious that $ \gamma_R\ast f$  is smooth and has compact support.  Therefore,   $ \gamma_R\ast f\in \mathcal{S}(\R^2)$. 
   \\
  \textbf{Step 2:  to find a Countable Dense Subset of $C_c^2$ } .\\
    Since $\mathcal{S}(\R^2)$ is separable,  let $\{f_i\}_{i \geq1 }$ be its  countable dense subset.      
  Since $C_c^{\infty}$ is dense in  $\mathcal{S}(\R^2)$,  for each 
  $f_i$, we can find a sequence $f_{ij}\in C_c^{\infty}    $,
  such that $f_{ij}$ converges to $f_i$ in $\mathcal{S}(\R^2)$
  (hence in $C^2$) as $j$ goes to infinity.  Therefore,  from the above arguments we know $\{f_{ij}\}_{i\geq 1, j\geq 1}$ is a dense subset of $C_c^2$ .\\
  Thus we have proved  $C_c^2$ is a separable metric space.
  \end{proof}

  \begin{definition}\label{weakstardual}
  {\sl We define the following function spaces:
  \begin{enumerate}
 \item 
 Define $(C_c^2)'$ to be the space which contains all the continuous linear  functional from $C_c^2$ to $\R$ with weak ${\ast}$ topology.

 \item 
  Define the time Sobolev  space $W^{1,2}([0,T];(C_c^2)') $ to be the space of all $u\in C([0,T];(C_c^2)') $ such that 
 $ u(\phi)  \in L^2([0,T];\R)$ and
 $\partial_t u(\phi)  \in L^2([0,T];\R)$ for any $\phi\in C_c^2$.
 The topology of $W^{1,2}([0,T];(C_c^2)') $ is defined to be the weakest topology on $W^{1,2}([0,T];(C_c^2)') $ such that
 for any $\psi\in L^2([0,T];C_c^2)$,
  the maps 
 $$u\mapsto \langle u ,\psi\rangle
 $$
 and 
  $$u  \mapsto \partial_t \langle u ,\psi\rangle 
 $$
are  continuous from $W^{1,2}([0,T];(C_c^2)') $ to $\R$. 
 \end{enumerate}

  }
  
  \end{definition}
  \begin{remark}
  {\sl
  \
  \begin{enumerate}

\item $C_c^2$ is not complete.  Denote by $C_0^2$ its closure with respect to  the  $C^2$ norm. Then by Banach--Steinhaus theorem the space  $(C_c^2)'$ is the same as the space
$(C_0^2)'$.  It is obvious that $C_0^2$ is also separable with the same 
countable dense subset of $C_c^2$.

\item   Since $C_c^2$ is separable, the closed  unit ball of $(C_c^2)'$ is compact
metric space by Banach-Alaoglu Theorem, hence also separable. 
Therefore,   $(C_c^2)'$ is also separable.

  \end{enumerate}}
  \end{remark}

We have the following tightness results.

\begin{lemma}
{\sl  Let $\{\mathcal{D}( \bar{\omega}_t^M)\}_{M=1}^{\infty}$ be the distribution of  $\bar{\omega}_t^M$ in $W^{1,2}([0,T];(C_c^2)')$. Then for any $T>0$,  $\{\mathcal{D}( \bar{\omega}_t^M)\}_{M=1}^{\infty}$ is tight in  $W^{1,2}([0,T];(C_c^2)') $. 
}

\end{lemma}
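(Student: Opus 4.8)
The plan is to verify the standard two-part criterion for tightness in the path space $W^{1,2}([0,T];(C_c^2)')$: first, tightness of the one-time marginals (which here are essentially uniform, since each $\bar\omega_t^M$ is the periodic extension of a white noise on $\T_M^2$ and Lemma \ref{whitenoiseconvergence} already gives uniform bounds $\E\|\bar\omega_t^M\|_{H^{-1-\nu}(\rho_\sigma)}^2\lesssim 1$ together with compact embedding into $H^{-1-}(\rho_\sigma)$), and second, an equicontinuity / moment estimate on the time increments that, via an Aubin--Lions–type compactness embedding into $W^{1,2}([0,T];(C_c^2)')$, upgrades the pointwise tightness to tightness on paths. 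Concretely, since $C_c^2$ is separable it suffices, by a theorem of the Mitoma / Jakubowski type, to test against a countable dense family $\{\phi_k\}\subset C_c^2$ and show that for each fixed $\phi\in C_c^2$ the real-valued processes $t\mapsto \langle\bar\omega_t^M,\phi\rangle$ are tight in $C([0,T];\R)$ with $\partial_t\langle\bar\omega_t^M,\phi\rangle$ bounded in $L^2([0,T];\R)$ uniformly in $M$, and that the ``spatial'' part $\omega_t^M$ lives in a fixed compact subset of $(C_c^2)'$ with high probability uniformly in $t$ and $M$.

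First I would fix $\phi\in C_c^2$ supported in $[-\tfrac A4,\tfrac A4]^2$ and restrict to $M>A$, so that the weak equation \eqref{Mequation} holds:
\[
\langle\bar\omega_t^M,\phi\rangle=\langle\bar\omega_0^M,\phi\rangle+\int_0^t\big\langle\omega_s^M\otimes\omega_s^M,H^M_{\phi_M,\epsilon}\big\rangle\,ds.
\]
The derivative in time is therefore $\partial_t\langle\bar\omega_t^M,\phi\rangle=\langle\omega_t^M\otimes\omega_t^M,H^M_{\phi_M,\epsilon}\rangle$, and by part (iii) of Corollary \ref{corollary WN} (applied on $\T_M^2$) together with the uniform $L^2$ bound on the kernel from Corollary \ref{TMHphibound},
\[
\E\Big[\big|\langle\omega_t^M\otimes\omega_t^M,H^M_{\phi_M,\epsilon}\rangle\big|^2\Big]
\le 2\|H^M_{\phi_M,\epsilon}\|_{L^2(\T_M^2\times\T_M^2)}^2+\Big(\int_{\T_M^2}H^M_{\phi_M,\epsilon}(x,x)\,dx\Big)^2
\le C_\phi,
\]
where $C_\phi$ is independent of $M$ (note $H^M_{\phi_M,\epsilon}(x,x)=\tfrac12 K^M_\epsilon(0)(\nabla\phi(x)-\nabla\phi(x))=0$). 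Since the process is stationary in $t$, this gives $\E\int_0^T|\partial_t\langle\bar\omega_t^M,\phi\rangle|^2\,dt\le TC_\phi$ uniformly in $M$, which controls the $W^{1,2}$-in-time norm of the real-valued test process; combined with $\E|\langle\bar\omega_0^M,\phi\rangle|^2=\|\phi\|_{L^2(\R^2)}^2$ (stationarity plus the white-noise covariance) this yields, by Chebyshev, tightness of $\{t\mapsto\langle\bar\omega_t^M,\phi\rangle\}_M$ in $W^{1,2}([0,T];\R)$, and because $W^{1,2}([0,T];\R)$ embeds compactly into $C([0,T];\R)$ (Morrey / Sobolev in one variable), tightness in $C([0,T];\R)$ as well.

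Then I would assemble these one-dimensional tightness statements into tightness in $W^{1,2}([0,T];(C_c^2)')$: pick a countable dense set $\{\phi_k\}_{k\ge1}\subset C_c^2$ (it exists by the separability lemma just proved), a compact exhaustion of the uniform marginal bound, and use a diagonal argument to produce, for any $\delta>0$, a set $\mathcal K_\delta$ of paths on which all the coordinate projections $u\mapsto\langle u,\phi_k\rangle$ and $u\mapsto\partial_t\langle u,\phi_k\rangle$ lie in fixed compact (resp. bounded) subsets of $C([0,T];\R)$ (resp. $L^2([0,T];\R)$), with $\P^M(\bar\omega^M\in\mathcal K_\delta)\ge1-\delta$ uniformly in $M$; since the topology of $W^{1,2}([0,T];(C_c^2)')$ is, by Definition \ref{weakstardual}, the one generated by all the maps $u\mapsto\langle u,\psi\rangle$ and $u\mapsto\partial_t\langle u,\psi\rangle$ for $\psi\in L^2([0,T];C_c^2)$, and $(C_c^2)'$ is separable with the closed unit balls compact metrizable by Banach--Alaoglu, such a $\mathcal K_\delta$ can be taken relatively compact. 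The main obstacle I expect is precisely this last bookkeeping step: showing that uniform control of countably many scalar projections, plus the uniform weighted-Sobolev marginal bound, genuinely pins the laws into a relatively compact set of the somewhat weak topology of $W^{1,2}([0,T];(C_c^2)')$ — i.e.\ identifying the correct compact sets in this space (an Aubin--Lions/Mitoma-flavoured argument) and checking that the metrizability and separability facts recorded in the remarks after Definition \ref{weakstardual} are enough to run Prokhorov's theorem. The analytic inputs (the uniform kernel bound, the white-noise covariance, stationarity) are all already available and require no new estimates.
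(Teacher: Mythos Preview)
Your proposal is correct and follows essentially the same route as the paper: the analytic core is the pair of uniform moment bounds $\E|\langle\bar\omega_t^M,\phi\rangle|^2\le C_\phi$ and $\E|\partial_t\langle\bar\omega_t^M,\phi\rangle|^2\le C_\phi$, obtained exactly as you describe via the white-noise covariance and Corollary~\ref{corollary WN} together with Corollary~\ref{TMHphibound}. The paper's proof simply asserts that these two bounds suffice ``by definition of the topology of $(C_c^2)'$'' and gives no further detail on the assembly step, whereas you correctly flag the Mitoma/Banach--Alaoglu bookkeeping as the place requiring care and sketch how it would go; in that sense your write-up is more complete than the paper's own proof.
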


\begin{proof}
By definition of the topology of $(C_c^2)'$, it suffices to prove that for any $\phi\in C_c^2$ ,
\beq\label{msqgtight1}
\mathbb{E}|  \big\<\bar{\omega}_t^M,\phi \big\>|^2\leq C
\eeq
and 
\beq\label{msqgtight2}
\mathbb{E}|\partial_t \big\<\bar{\omega}_t^M,\phi \big\>|^2\leq C,
\eeq
where $C$ is a constant which  depends on $\phi$ but not $M$ and $t$.
  \eqref{msqgtight1}
 is immediately obtained by Lemma  \ref{whitenoiseconvergence}.
 
 \noindent
 To obtain \eqref{msqgtight2}, we note from \eqref{Mequation} that for any $t>0$,
 $$\partial_t \big\<\bar{\omega}_t^M,\phi \big\>=\big\<\omega_s^M\otimes \omega_s^M, H_{\phi_M,\epsilon}^M \big\>.$$
 Assume that $\phi$ is supported in  $[-\frac{A}{4}, \frac{A}{4}]^2$.
Then  for $M>A$,    $\phi_M=\phi$.\\
Hence we have for $M>A$,    
 \beq
 \mathbb{E}|\big\<\omega_s^M\otimes \omega_s^M, H_{\phi_M,\epsilon}^M \big\>|^2=\mathbb{E}|\big\<\omega_s^M\otimes \omega_s^M, H_{\phi,\epsilon}^M \big\>|^2.
  \eeq
 \noindent
By Corollary 6 of \cite{FS},
 we deduce
$$ \mathbb{E}|\big\<\omega_s^M\otimes \omega_s^M, H_{\phi,\epsilon}^M \big\>|^2=2\int_{\mathbb{T}_M^{2}}\int_{\mathbb{T}_M^{2}}H_{\phi,\epsilon}^M\left(
x,y\right)  ^{2}dxdy.
 $$ 
 Recall  that $H_{\phi,\epsilon}^M(x,y)=\frac12K_{\epsilon}^M(x-y)(\nabla \phi(x)-\nabla \phi(y) )  $, where
 $|K_\epsilon^M(x)| \leq C_\epsilon \frac{1}{|x|^{2-\epsilon}} $ and $C_\epsilon$ is a uniform constant does not depend on $M$.  Moreover, by Corollary  \ref{TMHphibound},
 $H_{\phi,\epsilon}^M$ is uniformly bounded with respect to $M$ in the sense of 
 $L^2(\T_M^2\times\T_M^2)$-norm.
 Hence $ \mathbb{E}|\big\<\omega_s^M\otimes \omega_s^M, H_{\phi,\epsilon}^M \big\>|^2$ is uniformly bounded.  Since there are only 
 finite positive integers which are smaller than $A$, we conclude \eqref{msqgtight2}.
\end{proof}

Now we apply the Skorokhod  Theorem  \ref{skoro}.
Note that the space $W^{1,2}([0,T];(C_c^2)')$ satisfies the requirement of Theorem  \ref{skoro},
since $W^{1,2}([0,T];(C_c^2)')$ is separated by the countable dense subset of 
$L^2([0,T];C_c^2)$.

By the statement of Theorem  \ref{skoro},   
one needs to show that 
the $\sigma$- algebra generated by 
the countable dense subset of 
$L^2([0,T];C_c^2)$ is exactly  the Borel $\sigma$- algebra
of $W^{1,2}([0,T];(C_c^2)')$.
By Theorem \ref{algebrasame}
it suffices to prove that $W^{1,2}([0,T];(C_c^2)')$ is a standard Borel space.
(See Appendix \ref{standardborelspace}  for the definition of the  standard Borel space).

\begin{lemma}
{\sl   $W^{1,2}([0,T];(C_c^2)')$ is a standard Borel space.
}
\end{lemma}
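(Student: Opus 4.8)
The plan is to show that $W^{1,2}([0,T];(C_c^2)')$ is Borel-isomorphic to a Borel subset of a Polish space, which by the standard characterization (see Appendix \ref{standardborelspace}) is exactly what it means to be a standard Borel space. First I would set up the ambient Polish space: fix a countable dense subset $\{\psi_k\}_{k\geq 1}$ of $L^2([0,T];C_c^2)$ (which exists because $C_c^2$ is separable, as proved above, hence $L^2([0,T];C_c^2)$ is separable), and consider the map
\[
\Phi: W^{1,2}([0,T];(C_c^2)') \longrightarrow \big(C([0,T];\R)\big)^{\N} \times \big(L^2([0,T];\R)\big)^{\N}
\]
sending $u$ to the double sequence $\big(\, t\mapsto \langle u(t),\psi_k(t)\rangle \,\big)_k$ together with $\big(\, \partial_t\langle u,\psi_k\rangle \,\big)_k$. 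Here I am slightly abusing notation: $\langle u(t),\psi_k(t)\rangle$ should be read as the pairing with the time-dependent test function $\psi_k$, and one first reduces to simple tensors $\mathbf 1_{[a,b]}(t)\phi(x)$ and then uses density; the target is a countable product of Polish spaces, hence Polish.

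The key steps, in order, are: (1) verify $\Phi$ is well-defined and injective — injectivity follows because the $\psi_k$ are dense in $L^2([0,T];C_c^2)$ and $u\in C([0,T];(C_c^2)')$ is determined by its pairings against a dense set of test functions, using continuity in $t$; (2) identify the image $\Phi(W^{1,2})$ as a Borel subset of the product space — this is where one writes down the defining conditions: the two families of coordinates must be compatible (the $L^2$ coordinate is the distributional time-derivative of the $C([0,T];\R)$ coordinate, which is a countable intersection of Borel conditions obtained by testing against a dense set of time test functions), and the resulting linear functional at each rational time must extend continuously to all of $C_c^2$, i.e. satisfy a uniform bound of Banach--Steinhaus type, which again is a countable union/intersection of closed conditions on the coordinates; (3) check that $\Phi$ is a Borel isomorphism onto its image — by construction the topology on $W^{1,2}([0,T];(C_c^2)')$ (Definition \ref{weakstardual}) is the initial topology making $u\mapsto \langle u,\psi\rangle$ and $u\mapsto\partial_t\langle u,\psi\rangle$ continuous, so $\Phi$ is a homeomorphism onto its image with the subspace topology, and in particular a Borel isomorphism. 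Combining (1)--(3) with the fact that a Borel subset of a Polish space is a standard Borel space gives the claim.

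The main obstacle I expect is step (2): showing that the image is genuinely Borel rather than merely analytic. The delicate point is the "extends to a continuous functional on $C_c^2$" condition — one must phrase it so that it only refers to countably many of the coordinate functions $\langle u,\psi_k\rangle$ (evaluated at, say, rational times) and only through Borel operations. The clean way is to use the separability and the closure $C_0^2$ of $C_c^2$ noted in the remark after Definition \ref{weakstardual}: a functional on the countable dense set $\{f_{ij}\}$ extends continuously to $C_0^2$ if and only if it is bounded on a countable net exhausting the unit ball, and "$\sup$ over a countable set is finite" is a Borel condition. One also has to be a little careful that the $\partial_t$-coordinates determine the $\partial_t$ of the functional consistently for \emph{all} test functions simultaneously, not just the chosen dense ones; this follows from linearity plus an approximation argument, and each approximation step contributes only countably many Borel constraints. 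Everything else — separability of the product, that countable intersections of Borel sets are Borel, that the initial topology coincides with the subspace topology under $\Phi$ — is routine.
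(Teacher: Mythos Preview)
Your approach is sound but follows a genuinely different route from the paper. The paper fixes the concrete Polish space $X_2:=W^{1,2}([0,T];L^2(\R^2))$, notes that it embeds continuously into $X_1:=W^{1,2}([0,T];(C_c^2)')$, and then verifies by hand that $\mathscr{B}(X_2)=\mathscr{B}(X_1)\cap X_2$, expressing each norm ball of $X_2$ as a countable combination of the subbasic sets $\{|\langle u,\psi_k\rangle|+|\langle\partial_t u,\psi_l\rangle|<r\}$ with $\psi_k,\psi_l$ ranging over a dense subset of the unit ball of $L^2([0,T]\times\R^2)$ chosen inside $L^2([0,T];C_c^2)$. You instead push $X_1$ \emph{outward} into a countable product of Polish spaces via the evaluation map $\Phi$ and identify the image as Borel. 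Both arguments rest on the separability of $C_c^2$ established just above; the paper's computation is shorter because norm balls are easily written as countable intersections of half--spaces, whereas your step (2) has to encode ``extends to a continuous functional on $C_c^2$ at each time'' by countably many Borel constraints --- which you correctly single out as the main work. One caution on your step (3): claiming that $\Phi$ is a \emph{homeomorphism} onto its image is too strong, since the initial topology from only the countably many $\psi_k$ is a priori coarser than the full initial topology defining $X_1$; what you actually need, and can get, is that the Borel $\sigma$-algebras coincide, which follows once you observe that $u\mapsto\langle u,\psi\rangle$ for an arbitrary $\psi\in L^2([0,T];C_c^2)$ is a pointwise limit of $\langle u,\psi_{k}\rangle$ along an approximating subsequence and is therefore $\sigma(\Phi)$-measurable.
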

\begin{proof}
Let $X_1:=W^{1,2}([0,T];(C_c^2)')$ and 
$X_2:=W^{1,2}([0,T];L^2)$,
where $X_2$ consists of all the functions $v$ such that 
$v\in L^2([0,T]\times \R^2)$ and $\partial_t v\in L^2([0,T]\times \R^2)$
with the norm
$$\| v\|_{W^{1,2}([0,T];L^2)}:=\| v\|_{ L^2([0,T]\times \R^2)}
+\| \partial_t v\|_{ L^2([0,T]\times \R^2)}.
$$
It is obvious that $X_2$ is a Polish space and it is continuously embedded in $X_1$.
We need to prove 
$$\mathscr{B}(X_2)=\mathscr{B}(X_1 )\cap  X_2.
$$
Obviously $\mathscr{B}(X_1 )\cap  X_2    \subset  \mathscr{B}(X_2)           $.
\\
It suffices to show that  any open set of $X_2$ is in $\mathscr{B}(X_1 )\cap X_2$.\\
Note that   $\{B(x_m, r_n)\}_{m,n\geq 1}$ is  a countable topology basis of
$X_2$,
where $\{x_m\}_{m\geq 1}$ is a countable dense subset of $X_2$
and $\{r_n\}_{n\geq 1}$ is  the sequence of all the positive rational numbers.
Therefore,  we only need to prove
$$B(x_m, r_n)\in \mathscr{B}(X_1 )\cap X_2.
$$
Without loss of generality we only prove it for $x_m=0$.
\\
Note that
\beno
B(0, r_n)=&\{  x\in X_2;   \| x\|_{ L^2([0,T]\times \R^2)}
+\| \partial_t x\|_{ L^2([0,T]\times \R^2)}< r_n      \}\\ 
=&\bigcup\limits_{j\geq 1} \bigcap\limits_{k,l \geq 1} \{ x\in X_2;  |\langle x, \psi_k \rangle |+| \langle \partial_t x, \psi_l \rangle   | < r_n -\frac1j        \} \\
=& X_2\cap [ \bigcup\limits_{j\geq 1} \bigcap\limits_{k,l \geq 1} \{ x\in X_1;  |\langle x, \psi_k \rangle |+| \langle \partial_t x, \psi_l \rangle   | < r_n -\frac1j          \}]    ,
\eeno
where $\{\psi_k\}_{k\geq 1}$ is set to be a countable dense subset of 
the unit ball of $L^2([0,T]\times \R^2)$  such that  $\{\psi_k\}_{k\geq 1}$ is also a subset of 
$L^2([0,T];C_c^2)$.
Then  

$$\{x\in X_1; |\langle x, \psi_k \rangle |+| \langle \partial_t x, \psi_l \rangle   | < r_n -\frac1j        \}$$
 is an open set of $X_1$.
Hence $$B(0, r_n)\in \mathscr{B}(X_1 ) \cap X_2,
$$
which finishes our proof.
\end{proof}

Therefore,   there exists another probability space, 
which we still use the notation $(\Omega, \mathcal F,\P)$ for simplicity,
and a 
 sequence  of random variables   $\bar{\bar{\omega}}_t^{M_k}$ on  $(\Omega, \mathcal F,\P)$,   such that
 \begin{itemize}
 \item $\bar{\bar{\omega}}_t^{M_k}$ has the same distribution
to  $\bar{\omega}_t^{M_k}$ in $W^{1,2}([0,T];(C_c^2)') $;
(we also assume that $M_k$ is increasing to infinity and $M_k\geq k$) 
\item $\bar{\bar{\omega}}_t^{M_k}$ converge  $\P$-almost surely to some limit $\bar{\bar{\omega}}_t$ in $W^{1,2}([0,T];(C_c^2)') $.
 \end{itemize}

\noindent
Hence by the same argument of Lemma \ref{whitenoiseconvergence},
we obtain that for any fixed $t\in [0,T]$,  $\bar{\bar{\omega}}_t$ is a space white noise distribution on $\R^2$.

\noindent
By the definition of the solution on the torus, $\bar{\omega}_t^{M}$ has the following form
\begin{equation*}
\bar{\omega}_t^{M}=\sum\limits_{n\in \mathbb{Z}^2}\bar{G}_{n}^M(t,\theta_M)e_{n}^M
\text { on } \mathbb{R}^2,
\end{equation*}
where $\theta_M\in \Omega^M$,   $\bar{G}_{n}^M(\cdot,\theta_M) \in W^{1,2}([0,T];\R) $ and for each $t$,
$\bar{G}_{n}^M(t,\cdot)$,  $n\in \mathbb{Z}^2_{+}\cup\{0\}$ are independent random variables with standard Gaussian distributions
 on $(\Omega^M, \mathcal F^M,\P^M)$.\\
Note that for fixed $M$ ,   if $a_n(t)  \in W^{1,2}([0,T];\R) $ and\\ $\sum\limits_{n\in \mathbb{Z}^2}a_n(t) e_{n}^M\in W^{1,2}([0,T ];(C_c^2)') $ ,   the map
$$
\sum\limits_{n\in \mathbb{Z}^2}a_n(t) e_{n}^M
\mapsto \bigl(a_{n_1}(t),  a_{n_2}(t),..., a_{n_k}(t)\bigr)
$$
is continuous from  $W^{1,2}([0,T ];(C_c^2)') $ to
$\bigl(W^{1,2}([0,T];\R)\bigr) ^k $ for any $k$
and $n_1,n_2,...n_k \in \Z^2    $.

\noindent
Therefore,
$\bar{\bar{\omega}}_t^{M}$ also has the  form 
\begin{equation*}
\bar{\bar{\omega}}_t^{M}=\sum\limits_{n\in \mathbb{Z}^2}\bar{\bar{G}}_{n}^M(t,\theta)e_{n}^M
\text { on } \mathbb{R}^2
\end{equation*}
on  $(\Omega, \mathcal F,\P) $,
where $\bigl(\bar{G}_{n_1}^M(t,\cdot),\bar{G}_{n_2}^M(t,\cdot),...\bar{G}_{n_k}^M(t,\cdot)\bigr)$ and 
$\bigl(\bar{\bar{G}}_{n_1}^M(t,\cdot), \bar{\bar{G}}_{n_2}^M(t,\cdot),...,  \bar{\bar{G}}_{n_k}^M(t,\cdot)\bigr)$
have the same joint distributions on $\bigl(W^{1,2}([0,T];\R)\bigr) ^k $.
Define
\begin{equation*}
\hat{\omega}_t^{M}=\sum\limits_{n\in \mathbb{Z}^2}\bar{\bar{G}}_{n}^M(t,\theta)e_{n}^M
\text { on } \mathbb{T}_M^2,
\end{equation*}
i.e.   $\bar{\bar{\omega}}_t^{M}$ is  an extension of  $\hat{\omega}_t^{M}$
on $\mathbb{R}^2$.
 Moreover,  $\hat{\omega}_t^{M}$ has the same distribution as 
$\omega_t^{M}$,  hence it also satisfies the equation
\eqref{T2weakformulation}.

\noindent
Thus it satisfies the same equation as \eqref{Mequation} for any $\phi\in C_c^{2}(\R^2)$ $\P$-a.s.:

\beq\label{barbaromegaM}
  \big\<\bar{\bar{\omega}}_t^{M_k},\phi \big\> =\ \big\<\bar{\bar{\omega}}_0^{M_k},\phi \big\> + \int_0^t \big\<\hat{\omega}_s^{M_k}\otimes \hat{\omega}_s^{M_k}, H_{\phi_{M_k},\epsilon}^{M_k} \big\>\,d s .
 \eeq

\noindent
Same as usual, $\phi$   could also be viewed as a function on $\T_{M_k}^2$ when we fix $\phi$ and let $M_k$  large enough.
It suffices to prove for any fixed $\phi\in C_c^{2}(\R^2)$,
we have $\P$-a.s.
\beq\label{mainconvergence}
\lim\limits_{k \rightarrow \infty}\int_0^t \big\<\hat{\omega}_s^{M_k}\otimes \hat{\omega}_s^{M_k}, H_{\phi,\epsilon}^{M_k} \big\>\,d s 
=\int_0^t \big\<\bar{\bar{\omega}}_s\otimes \bar{\bar{\omega}}_s, H_{\phi,\epsilon}\big\>\,d s ,
\eeq
where on the left  hand side,  $ \big\<\omega_s^{M_k}\otimes \omega_s^{M_k}, H_{\phi,\epsilon}^{M_k} \big\>\ $ 
is the  duality product on the torus and on the right hand side,  
$\big\<\bar{\bar{\omega}}_s \otimes \bar{\bar{\omega}}_s, H_{\phi,\epsilon} \big\>\ $ is the  duality product on $\R^2$.
\\
\textbf{Proof of \eqref{mainconvergence}}\\
\textbf{Step 1}\\
Fix $\eta>0$.
Recall from Theorem \ref{constrcutomegaomegaHphi},  for a 
space white noise distribution $\bar{\omega}$ on $\R^2$ in some probability space,  we define 
$
\left\langle \bar{\omega} \otimes\bar{\omega} ,H_{\phi,\epsilon}\right\rangle
$ as the mean square limit of 
$\left\langle \bar{\omega} \otimes \bar{\omega} ,f_n
\right\rangle $, where 
$f_n\in C_c^{\infty}(\R^2\times \R^2)  $ are symmetric and
approximate $H_{\phi,\epsilon} $ $\P$-a.s. in the following sense:
\begin{align*}
\lim_{n\rightarrow\infty}\int_{\R^2}\int_{\R^2}\left(  f_n-H_{\phi,\epsilon}\right)
^{2}\left(  x,y\right)  dxdy  & =0\\
\lim_{n\rightarrow\infty}\int_{\R^2} f_n \left(  x,x\right)  dx  & =0.
\end{align*}
Moreover,  without loss of generality we assume that  for each $n$,  $f_n$ is supported 
in $[-\frac{n}{4},\frac{n}{4}]^4$.
 By   3 of Remark  \ref{fnapprox},  we can require 
 $f_n(x,x)=0$.
And by  2 of Remark  \ref{fnapprox}, we know the approximation 
is uniform with respect to the time $t$.
Hence we know 
$\big\<\bar{\bar{\omega}}_s \otimes \bar{\bar{\omega}}_s, H_{\phi,\epsilon} \big\>$ is 
the $L^2(\Omega; L^2([0,T]))=L^2([0,T]; L^2(\Omega))
$ limit of $\big\<\bar{\bar{\omega}}_s \otimes \bar{\bar{\omega}}_s, f_n \big\>$. 
Thus we can find an $n_0$, such that 
\beq\label{fn0}
\int_{\R^2}\int_{\R^2}\left(  f_{n_0}-H_{\phi,\epsilon}\right)^{2}\left(  x,y\right)  dxdy   <\frac{\eta}{T},
\eeq
thus
$$\mathbb{E}\int_{0}^T |\big\<\bar{\bar{\omega}}_s \otimes \bar{\bar{\omega}}_s, f_{n_0} \big\>-\big\<\bar{\bar{\omega}}_s \otimes \bar{\bar{\omega}}_s, H_{\phi,\epsilon} \big\>|^2 dt<\eta.
$$
\\
\textbf{Step 2}\\
 Fix $n_0$,  since 
$\bar{\bar{\omega}}_t^{M_k}$ converge  $ \P$-a.s.  to $\bar{\bar{\omega}}_t$ in $W^{1,2}([0,T];(C_c^2)') $, (hence in
$C([0,T];(C_c^2)') $)
$\big\<\bar{\bar{\omega}}_s^{M_k} \otimes \bar{\bar{\omega}}_s^{M_k}, f_{n_0} \big\>$ converges to 
$\big\<\bar{\bar{\omega}}_s \otimes \bar{\bar{\omega}}_s, f_{n_0} \big\>$ in $C([0,T];\R) $ $\P$-almost surely as $k$ goes to infinity.  Moreover,  since when $k\geq n_0$,  
$\big\<\bar{\bar{\omega}}_s^{M_k} \otimes \bar{\bar{\omega}}_s^{M_k}, f_{n_0} \big\>=\big\<\hat{\omega}_s^{M_k}\otimes \hat{\omega}_s^{M_k}, f_{n_0} \big\>$, 
where $f_{n_0}$   can be viewed as the product on the 
torus $\T_{M}^2$ when $k\geq n_0$,  just as we have shown during the proof of 
the Theorem \ref{constrcutomegaomegaHphi}.
By Corollary
\ref{corollary WN}
it is uniformly integrable.
Therefore,

$\big\<\hat{\omega}_s^{M_k}\otimes \hat{\omega}_s^{M_k}, f_{n_0} \big\>$  converges to 
$\big\<\bar{\bar{\omega}}_s \otimes \bar{\bar{\omega}}_s, f_{n_0} \big\>$
as $k\rightarrow\infty$ in $L^2(\Omega; L^2([0,T]))$.
\\
\textbf{Step 3}\\
By step 1 and step 2, we know that there exists some $k_0\geq n_0$ such that when $k\geq k_0$,  
\beq\label{step3}
\mathbb{E}\int_{0}^T | \big\<\hat{\omega}_s^{M_k}\otimes \hat{\omega}_s^{M_k}, f_{n_0} \big\> -\big\<\bar{\bar{\omega}}_s \otimes \bar{\bar{\omega}}_s, H_{\phi,\epsilon} \big\>|^2ds < 2 \eta
\eeq
\textbf{Step 4}\\
Just as we have mentioned in step 2, when $k\geq k_0\geq n_0$,
$\big\<\bar{\bar{\omega}}_s^{M_k}  \otimes \bar{\bar{\omega}}_s^{M_k} , f_{n_0} \big\>$ is the duality product 
on the 
torus $\T_{M_k}^2$.
\\
By ii) iii) of Corollary \ref{corollary WN} and the definition of  $\big\<\hat{\omega}_s^{M_k}\otimes \hat{\omega}_s^{M_k}, H_{\phi,\epsilon}^{M_k} \big\>$,
$$\mathbb{E}\int_{0}^T | \big\<\hat{\omega}_s^{M_k}\otimes \hat{\omega}_s^{M_k}, f_{n_0} \big\> -\big\<\hat{\omega}_s^{M_k}\otimes \hat{\omega}_s^{M_k}, H_{\phi,\epsilon}^{M_k} \big\>|^2ds \leq T\int_{\T_{M_k}^2}\int_{\T_{M_k}^2}\left(  f_{n_0}-H_{\phi,\epsilon}^{M_k}\right)^{2}\left(  x,y\right)  dxdy .
$$
If we view  $H_{\phi,\epsilon}^M(x,y)$ as  measurable functions on $\R^2$ which are $0$ valued outside 
$[-\frac{M}{2},\frac{M}{2}]^4$,
we can view $\int_{\T_{M_k}^2}\int_{\T_{M_k}^2}\left(  f_{n_0}-H_{\phi,\epsilon}^{M_k}\right)^{2}\left(  x,y\right)  dxdy$ as
$\int_{\R^2}\int_{\R^2}\left(  f_{n_0}-H_{\phi,\epsilon}^{M_k}\right)^{2}\left(  x,y\right)  dxdy$.\\
Since for any $x$, $K_\epsilon^M(x)$ goes to $K_\epsilon(x)$  as $M$ goes to infinity,   $H_{\phi,\epsilon}^M$ converges pointwisely to $H_{\phi,\epsilon}$.
Moreover,  $\{H_{\phi,\epsilon}^M\}_{M>0}$,  are all dominated by the $L^2(\R^2\times \R^2)$ integrable function $\frac{C\left(
\nabla\phi\left(  x\right)  -\nabla\phi\left(  y\right)  \right)}{|x-y|^{2-\epsilon}}   $
for some constant $C$  not depending on $M$, thus 
the convergence of $H_{\phi,\epsilon}^M$ to 
$H_{\phi,\epsilon}$ also holds in $L^2(\R^2\times \R^2)$.
Thus combining with \eqref{fn0},
we can find some $k_1\geq k_0$, such that when $k\geq k_1$,
$\int_{\R^2}\int_{\R^2}\left(  f_{n_0}-H_{\phi,\epsilon}^{M_k}\right)^{2}\left(  x,y\right)  dxdy<\frac{2\eta}{T}$, hence for any $k\geq k_1$,
\beq\label{step4}
\mathbb{E}\int_{0}^T | \big\<\hat{\omega}_s^{M_k}\otimes \hat{\omega}_s^{M_k}, f_{n_0} \big\> -\big\<\hat{\omega}_s^{M_k}\otimes \hat{\omega}_s^{M_k}, H_{\phi,\epsilon}^{M_k} \big\>|^2ds < 2\eta.
\eeq
Hence by \eqref{step3} and \eqref{step4}, we obtain that   \eqref{mainconvergence} holds in $L^2(\Omega)$.

Since for any $0\leq t\leq T$,  $\big\<\bar{\bar{\omega}}_t^{M_k},\phi \big\>$ converges to 
$\big\<\bar{\bar{\omega}}_t,\phi \big\>$ $\P$-a.s., the convergence of
\eqref{mainconvergence} also holds $\P$-a.s.

\bigbreak \noindent {\bf Acknowledgments.}  
S. Liang is grateful for the financial support from Deutsche Forschungsgemeinschaft(DFG) through the program IRTG 2235.
The author thanks  Prof. Dr. Rongchan Zhu 
for helpful discussion.

\newpage
\appendix

\section{Skorokhod's Representation Theorem}

We show the following Jakubowski's  version of the Skorokhod Theorem in the form given by Brze\'{z}niak and Ondrej\'{a}t \cite{brze?niak2013} Theorem A.1 and it was proved by A. Jakubowski in \cite{Jakubowski1998Short}.
\begin{theorem}\label{skoro}
{\sl Let $\mathcal{Y}$ be a topological space such that there exists a sequence ${f_{m}}$ of continuous functions $ f_{m}:\mathcal{Y}\rightarrow \mathbb{R}$ that separates points of $\mathcal{Y}$.    Let us denote by $\mathcal{S}$ the $\sigma$-algebra generated by the maps ${f_{m}}$. Then
\begin{itemize}
\item[(j1)] every compact subset of  $\mathcal{Y}$ is metrizable;

\item[(j2)] if $(\mu_{m})$ is tight sequence of probability measures on $(\mathcal{Y},\mathcal{S})$, then there exists a subsequence $(m_{k})$, a probability space $(\Omega , \mathcal{F},\mathbb{P})$  with $\mathcal{Y}$-valued Borel measurable variables $\xi_{k}$, $\xi$ such that $\mu_{m_{k}}$ is the law of $\xi_{k}$ and $\xi_{k}$ converges to $\xi$ almost surely on $\Omega$. Moreover, the law of $\xi$ is a Radon measure.
    \end{itemize}}
\end{theorem}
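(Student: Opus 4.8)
The plan is to handle the two assertions separately, (j1) being elementary and (j2) carrying the real content. For (j1) I would form the map $F=(f_m)_{m\ge 1}\colon\mathcal{Y}\to\mathbb{R}^{\mathbb{N}}$. It is continuous because each $f_m$ is, and injective because the $f_m$ separate points. If $K\subseteq\mathcal{Y}$ is compact, then $F|_K\colon K\to F(K)$ is a continuous bijection from a compact space onto a subspace of the metrizable (hence Hausdorff) space $\mathbb{R}^{\mathbb{N}}$; since a continuous bijection from a compact space to a Hausdorff space is a homeomorphism, $K\cong F(K)\subseteq\mathbb{R}^{\mathbb{N}}$, so $K$ is metrizable.

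For (j2) the strategy is to transport everything through $F$ into a Polish space, apply the classical Skorokhod representation theorem there, and pull the result back. First I would use tightness to fix compacts $K_1\subseteq K_2\subseteq\cdots\subseteq\mathcal{Y}$ with $\sup_m\mu_m(\mathcal{Y}\setminus K_j)\le 2^{-j}$, so that with $\mathcal{Y}_0:=\bigcup_j K_j$ one has $\mu_m(\mathcal{Y}_0)=1$ for all $m$. By (j1) each $K_j$ is compact metrizable, and since a countable separating family of continuous functions generates the Borel $\sigma$-algebra of a compact metrizable space, $\mathcal{S}$ agrees with $\mathcal{B}(K_j)$ on each $K_j$; thus each $\mu_m$ is inner-regular by the compacts $K_j$. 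Pushing forward, $\nu_m:=F_\ast\mu_m$ is a Borel probability measure on the Polish space $\mathbb{R}^{\mathbb{N}}$, and because $F(K_j)$ is compact (hence closed) with $\nu_m(F(K_j))\ge\mu_m(K_j)\ge 1-2^{-j}$ uniformly in $m$, the family $\{\nu_m\}$ is tight. Prokhorov's theorem then yields a subsequence along which $\nu_{m_k}$ converges weakly to some probability measure $\nu$, and the classical Skorokhod theorem on $\mathbb{R}^{\mathbb{N}}$ produces a probability space carrying $\mathbb{R}^{\mathbb{N}}$-valued variables $\zeta_k,\zeta$ with laws $\nu_{m_k},\nu$ and $\zeta_k\to\zeta$ almost surely.

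Next I would lift $\zeta_k,\zeta$ back to $\mathcal{Y}$-valued variables. The portmanteau lemma gives $\nu(F(K_j))\ge\limsup_k\nu_{m_k}(F(K_j))\ge 1-2^{-j}$, so $\nu$ and each $\nu_{m_k}$ are carried by $F(\mathcal{Y}_0)=\bigcup_j F(K_j)$, on which $F^{-1}$ is well defined (by injectivity) and Borel measurable into $\mathcal{Y}$: on each compact piece $F(K_j)$ the map $F|_{K_j}^{-1}$ is continuous by the argument of (j1), and $F(\mathcal{Y}_0)$ is a countable union of the Borel sets $F(K_j)$. Setting $\xi_k:=F^{-1}(\zeta_k)$ and $\xi:=F^{-1}(\zeta)$ on the full-measure event $\{\zeta_k,\zeta\in F(\mathcal{Y}_0)\}$ (arbitrary elsewhere) and using $\mathcal{S}=F^{-1}\big(\mathcal{B}(\mathbb{R}^{\mathbb{N}})\big)$, one checks that $\xi_k$ has law $\mu_{m_k}$ and that $\mathrm{Law}(\xi)$ is carried by the $\sigma$-compact set $\mathcal{Y}_0$, hence is Radon.

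The hard part will be the last point: deducing $\xi_k\to\xi$ almost surely \emph{in $\mathcal{Y}$} from $\zeta_k\to\zeta$ in $\mathbb{R}^{\mathbb{N}}$. This is not automatic, because $F^{-1}$ is continuous only on each compact piece $F(K_j)$, not globally on $F(\mathcal{Y}_0)$; one genuinely needs the coupled variables to lie, along almost every sample path, inside a single $K_{j_0}$ from some index on. Securing this is where the work goes: before the final Skorokhod step I would refine the subsequence by a diagonal argument over $j$, arranging that the restricted sub-probability measures $\mu_{m_k}|_{K_j}$ converge weakly on $K_j$ for every $j$ simultaneously, and — using $\sum_j 2^{-j}<\infty$ — build the coupling so that a Borel–Cantelli-type estimate confines almost every path of $k\mapsto\xi_k$ to some $K_{j_0}$ eventually. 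On such a path the $\zeta_k$ and $\zeta$ all lie in the compact set $F(K_{j_0})$, where $F$ is a homeomorphism, so $\zeta_k\to\zeta$ forces $\xi_k\to\xi$ in $K_{j_0}$ and hence in $\mathcal{Y}$. Assembling these steps yields (j2); this recovers Jakubowski's theorem \cite{Jakubowski1998Short}.
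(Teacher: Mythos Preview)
The paper does not prove this theorem; it is quoted without proof as a known result of Jakubowski \cite{Jakubowski1998Short} (in the form stated by Brze\'{z}niak and Ondrej\'{a}t \cite{brze?niak2013}), so there is no in-paper proof to compare against. Your outline is essentially Jakubowski's original strategy: embed $\mathcal{Y}$ into the Polish space $\mathbb{R}^{\mathbb{N}}$ via $F=(f_m)_{m\geq 1}$, use tightness to reduce to a $\sigma$-compact carrier $\mathcal{Y}_0=\bigcup_j K_j$, push forward, extract a weakly convergent subsequence, and pull back. You also correctly isolate the genuine difficulty: $F^{-1}$ is continuous only on each compact piece $F(K_j)$, so almost-sure convergence of $\zeta_k$ in $\mathbb{R}^{\mathbb{N}}$ does not automatically yield convergence of $\xi_k$ in $\mathcal{Y}$.

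Your proposed remedy --- a diagonal refinement so that $\mu_{m_k}|_{K_j}$ converges for every $j$, together with a Borel--Cantelli argument forcing the coupled variables eventually into a fixed $K_{j_0}$ --- is the right idea and matches Jakubowski's construction in spirit. The one place your sketch is genuinely hand-wavy is the phrase ``build the coupling so that\ldots'': applying the classical Skorokhod theorem once to $(\nu_{m_k})$ on $\mathbb{R}^{\mathbb{N}}$ gives you \emph{some} coupling, not one you get to tailor. Jakubowski's actual argument does not invoke classical Skorokhod as a black box on $\mathbb{R}^{\mathbb{N}}$; rather, he constructs the representation layer by layer on the compact metrizable $K_j$ (where Skorokhod is available) and glues the pieces, using the summability of the tails $2^{-j}$ to control the exceptional sets. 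Your write-up would need to make that construction explicit to close the gap.
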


\section{Standard Borel Spaces   }
      First we introduce the following definitions  of countably generated  Borel Space and 
standard Borel space.
\begin{definition}[Countably generated Borel space, see \cite{PARTHASARATHY1967v} Chapter V Definition 2.1]\label{countablygenerated}
{\sl A Borel space $(X,\mathcal{B})$ is  said to be countably generated if there exists a denumerable class $\mathcal{D}\subset \mathcal{B}$ such that $\mathcal{D}$ generates $ \mathcal{B}$.
}

\end{definition}

\begin{definition}[Standard borel space, see \cite{PARTHASARATHY1967v} Chapter V Definition 2.2] \label{standardborelspace}
{\sl A countably generated Borel space $(X,\mathscr{B})$ is called standard if there exists a complete separable metric space Y such that the $\sigma$-algebras $\mathscr{B}$ and $\mathscr{B}(Y)$ are $\sigma$-isomorphic.
}

\end{definition}
Moreover, we will introduce the following theorem, which is  Theorem 2.4 of Chapter V of
 \cite{PARTHASARATHY1967v}.
 
 \begin{theorem}\label{thm24partha}
 {\sl Let $(X,\mathscr{B})$ be standard,  $(Y, \mathscr{C})$ countably generated and $\varphi$ a one-one map of $X$ into $Y$ which is measurable. Then $Y'=\varphi(X)\in \mathscr{C}$ and $\varphi$ is a Borel isomorphism between the Borel spaces $(X,\mathscr{B})$ and $(Y', \mathscr{C}_{Y'})$.
 }
 \end{theorem}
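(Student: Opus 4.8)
The plan is to reduce Theorem~\ref{thm24partha} to the classical Lusin--Souslin theorem about injective Borel maps between Polish spaces, and then to use the analytic--set machinery (Lusin separation) to cross the last gap. First I would replace the two abstract Borel spaces by concrete ones. Since $(X,\mathscr{B})$ is standard (Definition~\ref{standardborelspace}), it is Borel isomorphic to a Borel subset of a complete separable metric space, so we may assume $X$ itself is a Borel subset of a Polish space. Since $(Y,\mathscr{C})$ is countably generated (Definition~\ref{countablygenerated}), fix a countable generating class $\mathscr{D}=\{D_n\}_{n\ge1}\subset\mathscr{C}$, which we may take to separate points, and define $g\colon Y\to C:=\{0,1\}^{\mathbb N}$ by $g(y)=(\mathbf{1}_{D_n}(y))_{n\ge1}$. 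Then $g$ is injective and measurable, and $g^{-1}(\mathscr{B}(C))=\sigma(\mathscr{D})=\mathscr{C}$, so $g$ maps $(Y,\mathscr{C})$ bimeasurably onto $\big(g(Y),\,\mathscr{B}(C)\cap g(Y)\big)$ (its inverse sends a set $g^{-1}(F)\in\mathscr{C}$ to $F\cap g(Y)$).

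Next I would transport $\varphi$ through these identifications, setting $h:=g\circ\varphi\colon X\to C$; it is measurable and injective. It suffices to show that $h(X)\in\mathscr{B}(C)$ and that $h$ is a Borel isomorphism of $X$ onto $h(X)$, because then $h(X)\subseteq g(Y)$ gives $h(X)\in\mathscr{B}(C)\cap g(Y)$, whence $Y'=\varphi(X)=g^{-1}(h(X))\in g^{-1}(\mathscr{B}(C))=\mathscr{C}$, and $\varphi=\big(g^{-1}|_{h(X)}\big)\circ h$ is a composition of Borel isomorphisms onto $Y'$ — which is exactly both conclusions of the theorem.

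The real obstacle is the remaining assertion: \emph{an injective Borel map $h$ from a Borel subset of a Polish space into a Polish space has Borel range}; bimeasurability onto the range is then automatic, since applying the same fact to restrictions $h|_B$ shows $h(B)$ is Borel for every Borel $B\subseteq X$, so $h^{-1}$ is measurable. This is the Lusin--Souslin theorem, and it resists soft arguments. The standard route is: first refine the Polish topology on $X$ (without enlarging its Borel $\sigma$-algebra) so that $h$ becomes continuous; then $h(X)$ is analytic, as a continuous image of a Borel set; finally, using injectivity, run a Souslin scheme $\{V_s\}$ of Borel subsets of $X$ indexed by finite integer sequences, with $V_\emptyset=X$, the children of each node partitioning it, and branch diameters tending to $0$, so that along a fixed level the analytic images $h(V_s)$ are pairwise disjoint; the Lusin separation theorem (disjoint analytic sets lie in disjoint Borel sets) then yields Borel sets $W_s$, nested along the tree and separating the $h(V_s)$, for which $h(X)=\bigcap_n\bigcup_{|s|=n}W_s$ is Borel. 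Once this ingredient is granted, the two reduction steps above are only routine encoding/decoding between the abstract $\sigma$-algebras and the concrete spaces. Since Theorem~\ref{thm24partha} is quoted verbatim from \cite{PARTHASARATHY1967v}, it is used here purely as a black box; the above is simply the descriptive--set--theoretic content behind it, carried out in full in Chapter~V of \cite{PARTHASARATHY1967v}.
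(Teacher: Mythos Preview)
The paper does not prove Theorem~\ref{thm24partha} at all: it is stated with the attribution ``Theorem 2.4 of Chapter V of \cite{PARTHASARATHY1967v}'' and then immediately used as a black box to derive Theorem~\ref{algebrasame}. There is therefore no proof in the paper to compare against.

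Your sketch is the standard descriptive-set-theoretic argument and is essentially what Parthasarathy carries out in Chapter~V: embed the countably generated target into the Cantor set via indicator functions of a generating family, reduce to an injective Borel map between Polish (or standard) spaces, and invoke the Lusin--Souslin theorem (analytic images plus Lusin separation) to get that the image is Borel and the map is a Borel isomorphism onto it. You even flag correctly that the theorem is being used here only as an imported result. One small point to be careful with: you write ``which we may take to separate points'' about the generating class $\mathscr{D}$; countable generation alone does not guarantee this, and without it $g$ need not be injective. In Parthasarathy's setting this is handled (his countably generated spaces are separated, or one passes to the quotient by the equivalence relation of $\mathscr{C}$-indistinguishability, which does not affect the conclusion since $\varphi$ is one-one and measurable), but it is worth saying explicitly rather than asserting it.
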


 By  Theorem 
 \ref{thm24partha}, we know the following theorem holds.

\begin{theorem}\label{algebrasame}
{\sl
 Let $(X,\mathscr{B})$ be any standard Borel space.  Assume that $\{f_n\}_{n\geq 1}$ is a sequence of $\mathscr{B}-$measurable   functions from    $X$ to $\R$ which separate the points of $X$.   Denote by $\sigma_0(X)$ the $\sigma$-algebra  generated by $\{f_n\}_{n\geq 1}$.  Then $\sigma_0(X)=\mathscr{B}$.}
 \end{theorem}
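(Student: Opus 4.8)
The plan is to embed $X$ concretely into the Polish space $\R^{\mathbb N}$ by means of the separating sequence, and then read off the conclusion from Theorem \ref{thm24partha}. First I would define
$$\varphi:X\longrightarrow \R^{\mathbb N},\qquad \varphi(x):=(f_1(x),f_2(x),\dots).$$
Since the $f_n$ separate the points of $X$, the map $\varphi$ is injective. Equip $\R^{\mathbb N}$ with the product topology, under which it is a complete separable metric space; hence $(\R^{\mathbb N},\mathscr{B}(\R^{\mathbb N}))$ is a standard, in particular countably generated, Borel space in the sense of Definitions \ref{countablygenerated} and \ref{standardborelspace}. Because $\R$ is second countable, the Borel $\sigma$-algebra of the countable product $\R^{\mathbb N}$ coincides with the product $\sigma$-algebra $\sigma(\pi_n:n\ge 1)$ generated by the coordinate projections $\pi_n$. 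As $f_n=\pi_n\circ\varphi$ and each $f_n$ is $\mathscr{B}$-measurable, it follows that $\varphi$ is $(\mathscr{B},\mathscr{B}(\R^{\mathbb N}))$-measurable.

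Next I would invoke Theorem \ref{thm24partha} with $(Y,\mathscr{C})=(\R^{\mathbb N},\mathscr{B}(\R^{\mathbb N}))$: since $(X,\mathscr{B})$ is standard, $(Y,\mathscr{C})$ is countably generated, and $\varphi$ is one-one and measurable, the theorem yields that $\varphi(X)\in\mathscr{B}(\R^{\mathbb N})$ and that $\varphi$ is a Borel isomorphism of $(X,\mathscr{B})$ onto $(\varphi(X),\mathscr{C}_{\varphi(X)})$, where $\mathscr{C}_{\varphi(X)}=\{A\cap\varphi(X):A\in\mathscr{B}(\R^{\mathbb N})\}$ is the trace $\sigma$-algebra. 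In particular, $\varphi$ and $\varphi^{-1}$ are both measurable, so $\varphi^{-1}(\mathscr{C}_{\varphi(X)})=\mathscr{B}$.

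It remains to identify $\sigma_0(X)$. The inclusion $\sigma_0(X)\subset\mathscr{B}$ is immediate, since every $f_n$ is $\mathscr{B}$-measurable. For the reverse inclusion, I would use the elementary fact that the $\sigma$-algebra generated by a family of maps bundled into one map into a product equals the preimage of the product $\sigma$-algebra: since $\mathscr{B}(\R^{\mathbb N})=\sigma(\pi_n:n\ge 1)$ and $f_n=\pi_n\circ\varphi$, one has $\sigma_0(X)=\sigma(f_n:n\ge 1)=\varphi^{-1}\big(\sigma(\pi_n:n\ge 1)\big)=\varphi^{-1}(\mathscr{B}(\R^{\mathbb N}))$. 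Because $\varphi$ takes values in $\varphi(X)$, we have $\varphi^{-1}(A)=\varphi^{-1}(A\cap\varphi(X))$ for every $A\subset\R^{\mathbb N}$, so $\varphi^{-1}(\mathscr{B}(\R^{\mathbb N}))=\varphi^{-1}(\mathscr{C}_{\varphi(X)})=\mathscr{B}$. Combining the two inclusions gives $\sigma_0(X)=\mathscr{B}$.

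The only step requiring genuine care — and the one I regard as the crux — is the reduction to Theorem \ref{thm24partha}: one must verify that $\R^{\mathbb N}$ (or, if a compact model is preferred, $[-\tfrac{\pi}{2},\tfrac{\pi}{2}]^{\mathbb N}$ after replacing $f_n$ by $\arctan f_n$) really is a countably generated standard Borel space, that the Borel $\sigma$-algebra of this countable product agrees with the product $\sigma$-algebra (so that $\varphi$ is Borel measurable and the pullback computation above is legitimate), and that ``Borel isomorphism onto the image equipped with the trace $\sigma$-algebra'' is precisely what pulls $\mathscr{B}(\R^{\mathbb N})$ back to $\mathscr{B}$. These are all standard facts about Polish spaces, so once they are in place the rest of the argument is routine bookkeeping.
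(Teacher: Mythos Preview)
Your proof is correct. Both arguments hinge on Theorem~\ref{thm24partha}, but the paper applies it more directly: instead of embedding into $\R^{\mathbb N}$, it simply takes the identity map $\mathrm{id}:(X,\mathscr{B})\to(X,\sigma_0(X))$, observes that $\sigma_0(X)$ is countably generated (being generated by the preimages under the $f_n$ of a countable base of $\R$) and that $\mathrm{id}$ is measurable since each $f_n$ is $\mathscr{B}$-measurable, and concludes from Theorem~\ref{thm24partha} that $\mathrm{id}$ is a Borel isomorphism --- which is exactly $\sigma_0(X)=\mathscr{B}$. Your route through $\varphi:X\to\R^{\mathbb N}$ is essentially the same idea unwound one level: you are representing $(X,\sigma_0(X))$ concretely as the image of $\varphi$ with the trace $\sigma$-algebra, then pulling back. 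This buys you an explicit Polish model and makes the identification $\sigma_0(X)=\varphi^{-1}(\mathscr{B}(\R^{\mathbb N}))$ transparent, at the cost of the extra bookkeeping (product $\sigma$-algebra $=$ Borel, trace $\sigma$-algebra, etc.) that you flag at the end. The paper's version avoids all of that by never leaving $X$.
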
      
    \begin{proof}
    Consider the identity map $id$: 
      $$(X, \mathscr{B}) \longrightarrow 
    (X,\sigma_0(X)).$$
      Since each $f_i$ is measurable,   it is obvious that $id$  is measurable.
    Hence by Theorem \ref{thm24partha} we know that $id$ is a Borel isomorphism, which finishes our proof.

\end{proof}

\cleardoublepage
\setcounter{secnumdepth}{-1}
\bibliographystyle{alpha}

\end{document}